\documentclass[a4paper]{amsart}
\usepackage{color}
\usepackage{tikz}
\usepackage{amssymb,amsmath,amsthm,amstext,amsfonts}
\usepackage{psfrag}
\usepackage{url}
\usepackage{amsfonts}
\usepackage{amsmath}
\usepackage{mathrsfs}
\usepackage{graphicx}
\usepackage{amsmath,amsthm,amssymb,amscd}
\usepackage{enumerate}
\usepackage{circledsteps}
\usepackage[colorlinks,linkcolor=black,anchorcolor=black,citecolor=black,hyperindex=true,CJKbookmarks=true]{hyperref}

\usepackage{epsfig}

\makeatletter \@addtoreset{equation}{section} \makeatother

\renewcommand\thetable{\thesection.\@arabic\c@table}

\theoremstyle{plain}
\newtheorem{maintheorem}{Theorem}

\newtheorem{theorem}{Theorem}[section]
\newtheorem{proposition}{Proposition}[section]
\newtheorem{lemma}{Lemma}[section]
\newtheorem{corollary}{Corollary}[section]
\newtheorem{definition}{Definition}[section]
\newtheorem{remark}{Remark}[section]

\theoremstyle{remark}

\long\def\begcom#1\endcom{}

\newcommand{\length}{\operatorname{\length}}

\def\length{\operatorname{length}}

\newcommand{\bl} {\begin{lemma}}
\newcommand{\el} {\end{lemma}}
\newcommand{\bt} {\begin{theorem}}
\newcommand{\et} {\end{theorem}}

\newcommand{\bp}{\begin{proof}}
\newcommand{\ep}{\end{proof}}
\newcommand  {\ee} {\end{equation}}
\newcommand  {\beq} {\begin{eqnarray*}}
\newcommand  {\eeq} {\end{eqnarray*}}

\newcommand  {\bd} {\begin{definition}}
\newcommand  {\ed} {\end{definition}}






\newcommand{\cM}{\mathcal{M}}
\newcommand{\cB}{\mathcal{B}}

\def\ep{\noindent{\hfill $\Box$}}

\setlength{\topmargin}{0cm} \setlength{\headsep}{1cm}
\setlength{\textwidth}{16cm} \setlength{\textheight}{22.5cm}
\setlength{\headheight}{-0.5cm} \setlength{\oddsidemargin}{-0.5cm}
\setlength{\evensidemargin}{-0.2cm} \setlength{\footskip}{0cm}

\makeatletter
\@namedef{subjclassname@2020}{2020 Mathematics Subject Classification}
\makeatother

\begin{document}

\title{Ergodic average of typical orbits and typical functions}

\author{Xiaobo Hou, Wanshan Lin and Xueting Tian}

\address{Xiaobo Hou, School of Mathematical Sciences,  Fudan University\\Shanghai 200433, People's Republic of China}
\email{20110180003@fudan.edu.cn}

\address{Wanshan Lin, School of Mathematical Sciences,  Fudan University\\Shanghai 200433, People's Republic of China}
\email{21110180014@m.fudan.edu.cn}

\address{Xueting Tian, School of Mathematical Sciences,  Fudan University\\Shanghai 200433, People's Republic of China}
\email{xuetingtian@fudan.edu.cn}


\begin{abstract}
In this article we mainly aim to know what kind of asymptotic behavior of typical orbits can display. For example, we show in any transitive system, the emprical measures of a typical orbit can cover all emprical measures of dense orbits and can intersect some physical-like measures. In particular, if the union set of emprical measures of all dense orbits is not singleton, then the typical orbit will display historic behavior simultaneously for typical continuous functions and the limit set of ergodic average along every continuous function equals to a closed interval composed by the union of limit sets of ergodic average on all dense orbits. Moreover, if the union set of emprical measures of all dense orbits contains all ergodic measures, the above interval equals to the rotation set.     These results are not only suitable for systems with specification-like properties or minimal systems, but also suitable for many other systems including all general (not assumed uniformly hyperbolic) nontrivial homoclinic classes and Bowen eyes. Moreover, we introduce a new property called $m$-$g$-product property weaker than classical specification property and minimal property and nontrivial examples are constructed.
\end{abstract}

\keywords{Irregular set, ergodic average, Baire category theorem, transitive}
\subjclass[2020] {37B05; 37B10; 37B65.}
\maketitle

\section{Introduction}

Throughout this paper, let $(X,d)$ be a compact metric space, and $T:X \rightarrow X$ be a continuous map. Such $(X,T)$ is called a dynamical system. Let $\cM(X)$, $\cM(X,T)$, $\cM^{e}(X,T)$ denote the spaces of probability measures, $T$-invariant, $T$-ergodic probability measures, respectively. Let $\mathbb{N}$, $\mathbb{N^{+}}$ denote non-negative integers, positive integers, respectively. Let $C(X)$ denote the space of real continuous functions on $X$ with the norm $\|f\|:=\sup\limits_{x\in X}|f(x)|.$ In a Baire space, a set is said to be residual if it has a dense $G_\delta$ subset. 

In this article we study ergodic average of typical orbits. There are many kinds of asymptotic behaviors related to ergodic average, such as irregular points, saturated sets and physical-like measures. Now we recall definitions of these concepts.
\begin{itemize}
	\item[$\bullet$] For any $x\in X$ and any $f\in C(X),$ let $$\underline{f}(x)=\liminf\limits_{n\to+\infty}\frac1n\sum_{i=0}^{n-1}f(T^ix)\text{ and } \overline{f}(x)=\limsup\limits_{n\to+\infty}\frac1n\sum_{i=0}^{n-1}f(T^ix).$$ For any $f\in C(X)$,  define the {$f$-irregular set} as $I(f,T) := \left\{x\in X\mid \underline{f}(x)\neq \overline{f}(x)\right\}.$
	The $f$-irregular set and the irregular set, the union of $I(f,T)$ over all continuous functions of $f$ (denoted by $IR(T)$), arise in the context of multifractal analysis and have been studied a lot,
	for example,  see \cite{Dowker1953,PP1984, BS2000,  Pesin1997, CKS2005, Thompson2010, DOT}.  The irregular points  are also called points with historic behavior,
	see \cite{Ruelle2001, Takens2008}. Note that $$I(f,T)=\cup_{k\in\mathbb{N^{+}}}\cap_{N\in\mathbb{N^{+}}}\cup_{n\geq N}\cup_{m\geq N}\left\{x\in X\mid |\frac1n\sum_{i=0}^{n-1}f(T^ix)-\frac1m\sum_{i=0}^{m-1}f(T^ix)|\geq\frac{1}{k} \right\},$$ and $IR(T)=\cup_{f\in C(X)}I(f,T)=\cup_{i\in\mathbb{N^{+}}}I(f_i,T)$ where $\{f_i\}_{i\in\mathbb{N^+}}$ is a dense subset of $C(X).$ So $I(f,T)$ and $IR(T)$ are both Borel sets.
	From Birkhoff's ergodic theorem, the irregular set is not detectable from the point of view of any invariant measure.
	However, the irregular set may have strong dynamical complexity in sense of Hausdorff dimension, Lebesgue positive measure, topological entropy, topological pressure, distributional chaos and residual property etc.
	Pesin and Pitskel \cite{PP1984} are the first to notice the phenomenon of the irregular set
	carrying full topological entropy in the case of the full shift on two symbols.   There are lots of advanced results to show that the irregular points can carry full entropy in symbolic systems, hyperbolic systems, non-uniformly expanding or hyperbolic systems,  and systems with specification-like or shadowing-like properties, for example, see \cite{BS2000, Pesin1997, CKS2005, Thompson2010, DOT, LLST2017, TV2017}. For topological pressure case see \cite{Thompson2010}, for Lebesgue positive measure see \cite{Takens2008, KS2017}, for distributional chaos see \cite{CT2021} and for residual property see \cite{DTY2015,LW2014}.
	\item By \cite[Theorem 6.4]{Walters1982} there exists a countable  set of continuous functions $\{f_i\}_{i\in\mathbb{N^+}}$  such that $||f_i||=1$ for any $i\in\mathbb{N^+}$ and  $$\rho(\mu_{1},\mu_{2}):=\sum_{i=1}^{+\infty}\frac{|\int f_id\mu_{1}-\int f_id\mu_{2}|}{2^{i}}$$ defines a metric for the weak*-topology on $\mathcal{M}(X).$ 
	Given a point $x\in X$, let $V_T(x)$ be the set of accumulation points of the empirical measure of the Birkhoff average $\frac{1}{n}\sum_{i=0}^{n-1}\delta_{T^{i}x}$, where $\delta_{y}$ is the Dirac measure at point $y$. For any $K\subset \mathcal{M}(X,T),$ denote $G^{K}=\{x\in X\mid K\subset V_T(x)\}$ and $G^{K}_{\varepsilon}=\{x\in X\mid \sup_{\mu\in K}\rho(\mu,V_T(x))\leq \varepsilon\},$ where $\rho(\mu,V_T(x))=\inf_{\nu\in V_T(x)}\rho(\mu,\nu).$
	For any nonempty compact connected subset $K\subset\cM(X,T)$, denote by $G_{K}=\{x\in X\mid V_T(x)=K\}$
	the saturated set of $K$. Note that for any $x\in X$, $V_T(x)$ is always a nonempty compact connected subset of $\cM(X,T)$ by \cite[Proposition 3.8]{DGS1976}, so $G_{K}\neq\emptyset$ requires that $K$ is a nonempty compact connected set. When $\sharp K>1,$ one has $G_K\subset G^K\subset IR(T),$ so we can obtain $IR(T)$ is residual in $X$ if $G_K$ is residual in $X.$ Therefore, saturated set is an important tool in the research of irregular set. In fact,  during the study of saturated sets, we find that  typical orbits and typical functions have extreme deviations. This will be seen in Theorem \ref{maintheorem-2}.
	\item If $X=M$ is a manifold, a probability measure $\mu \in \mathcal{M}(X)$ is called physical-like (or SRB-like or observable) if for any $\varepsilon>0$ the set
	$G^{\mu}_{\varepsilon}$ has positive Lebesgue measure. We denote by $\mathcal{O}_{T}$ the set of physical-like measures for $T.$ In \cite{CTV2019} the authors define the points with physical-like behaviour, that is $\{x\in X\mid V_T(x)\cap \mathcal{O}_{T}\neq\emptyset\}.$
\end{itemize}

Define
$\mathcal{M}_{dense}^*:=\{\mu\in\mathcal{M}(X,T)\mid  G^{\mu}_{\varepsilon} \text{ is dense in } X \text{ for any }\varepsilon>0\}.$ 
Now we state first result.
\begin{maintheorem}\label{maintheorem-1}
	Suppose that $(X,T)$ is a dynamical system with $\sharp \mathcal{M}_{dense}^*>1$. Then there exist a residual invariant subset $Y\subset X$ and an open and dense subset $\mathfrak{U}\subset C(X)$ such that for any $x\in Y$ and any $f\in \mathfrak{U}$ one has
	\begin{enumerate}
		\item $\mathcal{M}_{dense}^*\subset V_T(x);$
		\item $\underline{f}(x)\leq \inf\limits_{\mu\in \mathcal{M}_{dense}^*}\int f d\mu<\sup\limits_{\mu\in \mathcal{M}_{dense}^*}\int f d\mu\leq \overline{f}(x);$
		\item $V_T(x)\cap \mathcal{O}_{T}\neq\emptyset$ provided that $X=M$ is a manifold.
	\end{enumerate}
\end{maintheorem}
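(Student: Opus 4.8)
The plan is to derive all three items from two residual $T$-invariant subsets of $X$ together with one open dense subset of $C(X)$, all produced by a single Baire-category mechanism. Write $\mathcal E_n(x)=\frac1n\sum_{i=0}^{n-1}\delta_{T^ix}$, so that $V_T(x)$ is the set of accumulation points of $(\mathcal E_n(x))_{n\ge1}$ in the weak* topology, and recall that $V_T(Tx)=V_T(x)$. The mechanism rests on two elementary facts. First, if $g_n\colon X\to[0,\infty)$ are continuous and $c\ge0$, then $\{x\mid\liminf_n g_n(x)\le c\}=\bigcap_{N\ge1}\bigcap_{l\ge1}\bigcup_{n\ge N}\{x\mid g_n(x)<c+\tfrac1l\}$ is a $G_\delta$ set. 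Second, $\rho(\mu,V_T(x))=\liminf_n\rho(\mathcal E_n(x),\mu)$: one inequality is immediate from the definition of $V_T(x)$, and for the other one picks a subsequence along which $\rho(\mathcal E_{n_i}(x),\mu)$ converges to the $\liminf$ and then, by compactness of $\mathcal M(X)$, a further subsequence of the $\mathcal E_{n_i}(x)$ converging to some $\nu\in V_T(x)$, whence $\rho(\mu,V_T(x))\le\rho(\mu,\nu)=\liminf_n\rho(\mathcal E_n(x),\mu)$. Taking $g_n(x)=\rho(\mathcal E_n(x),\mu)$ shows that each $G^\mu_\varepsilon=\{x\mid\rho(\mu,V_T(x))\le\varepsilon\}$ is $G_\delta$; if moreover $\mu\in\mathcal M_{dense}^*$ it is dense, hence residual, and so is $G^\mu:=\bigcap_{k\ge1}G^\mu_{1/k}=\{x\mid\mu\in V_T(x)\}$.

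For item (1), fix a countable subset $\{\mu_j\}_{j\ge1}$ that is dense in $\mathcal M_{dense}^*$ ($\mathcal M(X)$ being separable) and set $Y_1:=\bigcap_{j\ge1}G^{\mu_j}$; this is residual by the previous paragraph and $T$-invariant since each $G^{\mu_j}$ is, and every $x\in Y_1$ satisfies $\mathcal M_{dense}^*\subseteq\overline{\{\mu_j:j\ge1\}}\subseteq V_T(x)$ because $V_T(x)$ is closed. For item (2), if $x\in Y_1$ then for each $\mu\in\mathcal M_{dense}^*$ some subsequence $\mathcal E_{n_i}(x)\to\mu$, so $\frac1{n_i}\sum_{\ell=0}^{n_i-1}f(T^\ell x)\to\int f\,d\mu$, giving $\underline f(x)\le\int f\,d\mu\le\overline f(x)$; taking the infimum and the supremum over $\mu\in\mathcal M_{dense}^*$ yields the two outer inequalities of (2), for every $f\in C(X)$. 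The strict middle inequality is the only point at which $\sharp\mathcal M_{dense}^*>1$ enters: set $\mathfrak U:=\{f\in C(X)\mid\inf_{\mu\in\mathcal M_{dense}^*}\int f\,d\mu<\sup_{\mu\in\mathcal M_{dense}^*}\int f\,d\mu\}$. It is open because $|\int f\,d\mu-\int g\,d\mu|\le\|f-g\|$ uniformly in $\mu$, and dense: choosing $\mu_1\ne\mu_2$ in $\mathcal M_{dense}^*$ and $g_0\in C(X)$ with $\int g_0\,d\mu_1\ne\int g_0\,d\mu_2$, the map $t\mapsto\int(f+tg_0)\,d\mu_1-\int(f+tg_0)\,d\mu_2$ is affine and non-constant, hence nonzero except for at most one $t$, so $f+tg_0\in\mathfrak U$ can be chosen arbitrarily close to $f$. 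Hence $Y_1$ and $\mathfrak U$ already witness (1) and (2).

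For item (3) I would run the same machinery against $\mathcal O_T$ instead of a single measure. I will use the facts — valid for every continuous map of a compact manifold, due to Catsigeras and Enrich — that $\mathcal O_T$ is nonempty and weak*-compact and that $\emptyset\ne V_T(x)\subseteq\mathcal O_T$ for Lebesgue-almost every $x\in M$. Put $\psi_n(x):=\inf_{\mu\in\mathcal O_T}\rho(\mathcal E_n(x),\mu)$, a continuous function of $x$; arguing exactly as above, now using compactness of both $V_T(x)$ and $\mathcal O_T$, one gets $\inf_{\nu\in V_T(x),\,\mu\in\mathcal O_T}\rho(\nu,\mu)=\liminf_n\psi_n(x)$, so each $H_k:=\{x\mid\liminf_n\psi_n(x)\le\tfrac1k\}$ is $G_\delta$. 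Each $H_k$ is dense, since any nonempty open $U\subseteq M$ has positive Lebesgue measure and therefore contains some $x$ with $V_T(x)\subseteq\mathcal O_T$, for which $x\in H_k$. Consequently $Y_3:=\bigcap_{k\ge1}H_k=\{x\mid V_T(x)\cap\mathcal O_T\ne\emptyset\}$ (equality by compactness of the two sets) is residual and $T$-invariant, and one takes $Y:=Y_1\cap Y_3$.

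The delicate ingredients are the two identities expressing the relevant set-distances $\rho(\mu,V_T(x))$ and $\inf_{\nu\in V_T(x),\,\mu\in\mathcal O_T}\rho(\nu,\mu)$ as limit inferiors of continuous functions of $x$, and — above all — the observation that the sub-level sets $\{x\mid\liminf_n g_n(x)\le c\}$ of such limit inferiors are $G_\delta$ and not merely Borel, which is exactly what lets the Baire argument start. For item (3) I expect the main temptation to be to try to exhibit a single physical-like measure with a dense basin, i.e.\ an element of $\mathcal O_T\cap\mathcal M_{dense}^*$, which in general does not exist; the right move is instead to use the full-measure Catsigeras--Enrich set only to secure density of the open layers $\bigcup_{n\ge N}\{x\mid\psi_n(x)<\tfrac1k+\tfrac1l\}$ and to let Baire's theorem produce the residual intersection.
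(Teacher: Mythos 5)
Your proposal is correct and follows essentially the same route as the paper: both arguments rest on the observation that ``the empirical measure $\frac1N\sum_{j<N}\delta_{T^jx}$ enters a fixed open set of measures at some time $\ge N_0$'' defines an open subset of $X$ which is dense by the definition of $\mathcal{M}_{dense}^*$, followed by a Baire intersection over a countable family (you use a countable dense subset of $\mathcal{M}_{dense}^*$ and sub-level sets of $\liminf_n\rho(\mathcal E_n(x),\cdot)$, the paper a countable cover of $\overline{K}$ by shrinking balls — a cosmetic difference). Your treatment of $\mathfrak U$ and of item (3) via the Catsigeras--Enrich theorem likewise matches the paper's $\mathcal{C}_{\mathcal{M}_{dense}^*}$ and its Corollary on $^{\mathcal{O}_T}G$.
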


Remark that the most important observation in the proof is that for any $\mu\in  \mathcal{M}_{dense}^*,$ $G^{\mu}$ is residual in $X.$ This will be seen in the Section \ref{section-2}.  In addition, when $(X,T)$ is transitive, we will show that for any $\mu\in  \mathcal{M}_{dense}^*,$ there exists a transitive point $x$ such that $\mu\in V_T(x).$ Now, we need some definitions about transitive system.
A dynamical system is said to be transitive if $N(U,V):=\{n\in\mathbb{N}\mid U\cap{T^{-n}V}\neq\emptyset\}$ is nonempty for any nonempty open set $U$ and $V$. 
Denote the set of transitive points of $(X,T)$ by $$Trans(X,T):=\{x\in X\mid orb(x,T)\text{ is dense in X}\},$$ where $orb(x,T)=\{T^nx\mid n\in\mathbb{N}\}.$  It is known that $Trans(X,T)$ is either empty or residual in $X$ when there is no isolated point in $X$.
When $(X,T)$ is transitive, it's known that $Trans(X,T)=\{x\in X\mid \omega_T(x)\text{ is dense in X}\}$ is residual in $X.$
Next we define
$\mathcal{M}_{Trans}:=\bigcup\limits_{y\in Trans(X,T)}V_T(y).$ We denote $\mathcal{R}$
the set of real continuous functions whose irregular set is residual 
in $X$:
$$
\mathcal{R}:=\{f\in C(X)\mid I(f,T)\text{ is residual in } X\}.
$$
Now we state one result for transitive systems.
\begin{maintheorem}\label{maintheorem-2}
	Suppose that $(X,T)$ is a transitive dynamical system. Then $\mathcal{M}_{dense}^*=\mathcal{M}_{Trans}$  and there exists a residual invariant subset $Y\subset X$ such that for any $x\in Y$ and any $f\in C(X)$ one has
	\begin{enumerate}
		\item $x$ is extremely transitive that is $x\in Trans(X,T)$ and $V_T(x)=\mathcal{M}_{Trans};$
		\item $\underline{f}(x)=\inf\limits_{y\in Trans(X,T)}\underline{f}(y)= \inf\limits_{\mu\in \mathcal{M}_{Trans}}\int f d\mu\leq\sup\limits_{\mu\in \mathcal{M}_{Trans}}\int f d\mu=\sup\limits_{y\in Trans(X,T)}\underline{f}(y)= \overline{f}(x);$
		\item $V_T(x)\cap \mathcal{O}_{T}\neq\emptyset$ provided that $X=M$ is a manifold.
	\end{enumerate}
    In particular, if further $\sharp \mathcal{M}_{Trans}>1,$ then $\mathcal{R}$ is open and dense in $C(X),$ and $Y\subset\bigcap\limits_{f\in\mathcal{R}} I(f,T).$
\end{maintheorem}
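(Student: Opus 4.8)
The plan is to first prove the set identity $\mathcal{M}_{dense}^{*}=\mathcal{M}_{Trans}$, then build $Y$ as the intersection of $Trans(X,T)$ with countably many residual sets $G^{\nu_{j}}$, and finally deduce (1)--(3) and the ``in particular'' clause from the single fact that $V_{T}(x)=\mathcal{M}_{Trans}$ for every $x\in Y$.

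For the identity: if $\mu\in V_{T}(y)$ with $y$ transitive then, since $V_{T}$ is constant along orbits, $orb(y,T)\subseteq G^{\mu}_{\varepsilon}$ for all $\varepsilon>0$, so $G^{\mu}_{\varepsilon}$ is dense and $\mu\in\mathcal{M}_{dense}^{*}$; conversely, since $G^{\mu}$ is residual for $\mu\in\mathcal{M}_{dense}^{*}$ (the observation to be established in Section~\ref{section-2}) and $Trans(X,T)$ is residual, any point of $G^{\mu}\cap Trans(X,T)$ is transitive with $\mu\in V_{T}(\cdot)$, so $\mu\in\mathcal{M}_{Trans}$. One then checks $\mathcal{M}_{dense}^{*}$ is closed in $\mathcal{M}(X,T)$ (if $\mu_{k}\to\mu$ with $\mu_{k}\in\mathcal{M}_{dense}^{*}$, then $G^{\mu_{k}}_{\varepsilon/2}\subseteq G^{\mu}_{\varepsilon}$ once $\rho(\mu,\mu_{k})\le\varepsilon/2$, and the former is dense), hence compact and metrizable, so one may fix a countable dense subset $\{\nu_{j}\}_{j\ge1}$ of it and put $Y:=Trans(X,T)\cap\bigcap_{j\ge1}G^{\nu_{j}}$. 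Each $G^{\nu_{j}}$ is residual and invariant (using $V_{T}(Tz)=V_{T}(z)$) and $Trans(X,T)$ is residual and invariant, so $Y$ is residual and invariant; and for $x\in Y$ one has $\mathcal{M}_{dense}^{*}=\overline{\{\nu_{j}\}}\subseteq V_{T}(x)\subseteq\mathcal{M}_{Trans}$ (the last inclusion because $x$ is transitive), so $V_{T}(x)=\mathcal{M}_{Trans}$, which is (1).

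For (2), fix $x\in Y$ and $f\in C(X)$. The accumulation set of $\bigl(\tfrac1n\sum_{i=0}^{n-1}f(T^{i}x)\bigr)_{n}=\bigl(\int f\,d(\tfrac1n\sum_{i<n}\delta_{T^{i}x})\bigr)_{n}$ is $\{\int f\,d\nu:\nu\in V_{T}(x)\}$, a compact subset of $\mathbb{R}$ which, by (1), has minimum $\inf_{\mu\in\mathcal{M}_{Trans}}\int f\,d\mu$ and maximum $\sup_{\mu\in\mathcal{M}_{Trans}}\int f\,d\mu$; hence $\underline f(x)=\inf_{\mathcal{M}_{Trans}}\int f$ and $\overline f(x)=\sup_{\mathcal{M}_{Trans}}\int f$, and the middle inequality in (2) is clear. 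For any transitive $y$, $\underline f(y)=\int f\,d\nu$ for some $\nu\in V_{T}(y)\subseteq\mathcal{M}_{Trans}$, so $\inf_{\mathcal{M}_{Trans}}\int f\le\underline f(y)\le\sup_{\mathcal{M}_{Trans}}\int f$; taking infima over $y$ and using that $x\in Y$ already attains the lower bound gives $\inf_{y\in Trans(X,T)}\underline f(y)=\inf_{\mathcal{M}_{Trans}}\int f$. The supremum identity $\sup_{y\in Trans(X,T)}\underline f(y)=\sup_{\mathcal{M}_{Trans}}\int f$ is the delicate one and, I expect, the main obstacle: its ``$\le$'' is the same observation, but ``$\ge$'' cannot come from Baire category, since a typical $x\in Y$ has $\underline f(x)=\inf_{\mathcal{M}_{Trans}}\int f$, the wrong endpoint. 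Instead one needs a quantitative strengthening of the Section~\ref{section-2} construction of transitive points in $G^{\mu}$: for every $\mu\in\mathcal{M}_{dense}^{*}$ and $\varepsilon>0$ there should be a transitive point $y$ with $V_{T}(y)\subseteq\overline{B_{\varepsilon}(\mu)}$, equivalently $\limsup_{n}\rho\bigl(\mu,\tfrac1n\sum_{i<n}\delta_{T^{i}y}\bigr)\le\varepsilon$. Since no specification or shadowing is assumed, the hard part is to realise, inside one dense orbit, arbitrarily long orbit pieces whose empirical measure stays $\varepsilon$-close to $\mu$, and this is exactly where the defining density of $G^{\mu}_{\varepsilon}$ must be used. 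Applying this to a maximiser $\mu^{*}$ of $\int f$ over $\mathcal{M}_{Trans}$ produces $\nu\in V_{T}(y)$ with $\rho(\nu,\mu^{*})\le\varepsilon$, hence $\underline f(y)\ge\int f\,d\mu^{*}-\omega_{f}(\varepsilon)$ with $\omega_{f}(\varepsilon)\to0$ by weak$^{*}$-uniform continuity of $\nu\mapsto\int f\,d\nu$; letting $\varepsilon\to0$ gives the claim, and with $\overline f(x)=\sup_{\mathcal{M}_{Trans}}\int f$ this finishes (2). Item (3) is obtained as in Theorem~\ref{maintheorem-1}(3): when $\sharp\mathcal{M}_{Trans}>1$ one has $\sharp\mathcal{M}_{dense}^{*}>1$, so Theorem~\ref{maintheorem-1} may be invoked directly and its residual set intersected into $Y$; when $\sharp\mathcal{M}_{Trans}=1$ the claim reduces to the unique element of $\mathcal{M}_{Trans}$ being physical-like, which follows from the properties of $\mathcal{O}_{T}$ in \cite{CTV2019}.

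For the ``in particular'' clause, assume $\sharp\mathcal{M}_{Trans}>1$. By (2), for $x\in Y$ one has $x\in I(f,T)$ (i.e. $\underline f(x)\neq\overline f(x)$) exactly when $\mu\mapsto\int f\,d\mu$ is non-constant on $\mathcal{M}_{Trans}$; since $Y$ is residual, $I(f,T)$ is residual for every such non-constant $f$ and meager otherwise (as $I(f,T)\cap Y=\emptyset$). Hence $\mathcal{R}=\{f\in C(X):\mu\mapsto\int f\,d\mu\text{ is non-constant on }\mathcal{M}_{Trans}\}$, and $Y\subseteq I(f,T)$ for all $f\in\mathcal{R}$, i.e. $Y\subseteq\bigcap_{f\in\mathcal{R}}I(f,T)$. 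Finally $\mathcal{R}$ is the preimage of $(0,\infty)$ under the Lipschitz map $f\mapsto\max_{\mu\in\mathcal{M}_{Trans}}\int f\,d\mu-\min_{\mu\in\mathcal{M}_{Trans}}\int f\,d\mu$, hence open; and it is dense, since given $f\in C(X)\setminus\mathcal{R}$ and $\delta>0$ one chooses $\mu_{1}\neq\mu_{2}$ in $\mathcal{M}_{Trans}$ and $h\in C(X)$ with $\int h\,d\mu_{1}\neq\int h\,d\mu_{2}$, and then $f+th\in\mathcal{R}$ for every $t\neq0$, with $\|f+th-f\|<\delta$ when $|t|$ is small.
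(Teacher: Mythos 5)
Your proof of the identity $\mathcal{M}_{dense}^{*}=\mathcal{M}_{Trans}$, your construction of $Y$ (as $Trans(X,T)$ intersected with countably many residual invariant sets $G^{\nu_j}$), item (1), and the whole ``in particular'' clause are correct and follow essentially the paper's route: the paper packages your countable-ball Baire argument as the residuality of $G^{\mathcal{M}_{dense}^{*}}$ (Corollary \ref{coro-1}) and identifies $\mathcal{R}$ with $\mathcal{C}_{\mathcal{M}_{Trans}}$, which is exactly your description of $\mathcal{R}$ as the set of $f$ whose integral is non-constant on $\mathcal{M}_{Trans}$.

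The place where you stop --- the equality $\sup_{y\in Trans(X,T)}\underline{f}(y)=\sup_{\mu\in\mathcal{M}_{Trans}}\int f\,d\mu$ --- is a genuine gap in your proposal as a proof of the printed statement: the lemma you postulate (for each $\mu\in\mathcal{M}_{Trans}$ and $\varepsilon>0$ a transitive $y$ with $V_T(y)\subseteq\overline{\mathcal{B}(\mu,\varepsilon)}$) is nowhere established, and the density of $G^{\mu}_{\varepsilon}$ only forces $V_T(x)$ to \emph{meet} the closed $\varepsilon$-ball, not to be contained in it, so I do not see how to obtain it at this level of generality. You should be aware, however, that the paper's own proof does not establish this equality either: it proves $\sup_{y\in Trans(X,T)}\overline{f}(y)=\sup_{\mu\in\mathcal{M}_{Trans}}\int f\,d\mu$, which is the exact mirror of the infimum identity and follows in one line from $\overline{f}(y)\leq\sup_{\nu\in V_T(y)}\int f\,d\nu\leq\sup_{\mu\in\mathcal{M}_{Trans}}\int f\,d\mu$ for every transitive $y$, together with the fact that $x\in Y$ attains the bound. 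The $\underline{f}$ under the supremum in item (2) is almost certainly a typo for $\overline{f}$; with that reading your argument closes immediately by the same symmetry you use on the infimum side. One smaller imprecision: in item (3), for the case $\sharp\mathcal{M}_{Trans}=1$ you assert that the unique element of $\mathcal{M}_{Trans}$ is physical-like ``from the properties of $\mathcal{O}_{T}$''; that is not an a priori fact (denseness of $G^{\mu}_{\varepsilon}$ carries no Lebesgue-measure information), but a consequence of intersecting $Y$ with the residual set ${}^{\mathcal{O}_{T}}G$ of Corollary \ref{coro-2} --- which is what the paper does uniformly, with no case split on $\sharp\mathcal{M}_{Trans}$.
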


We also obtain a result in subadditive sequences which can be applicative to cocycles.
A sequence $\{f_n\}_{n=1}^{\infty}$ of functions from $X$ to $\mathbb{R}$ will be called subadditive if, for each $x\in X$ and $m,n\geq 1,$ the inequality $f_{m+n}(x)\leq f_n(T^mx)+f_m(x)$ is satisfied. We denote $C(X,GL(d,\mathbb{R}))$ the space of continuous map $A:X\to GL(d,\mathbb{R})$ taking values into the space of $d\times d$ real invertible matrices with the norm $\|A\|:=\sup\limits_{x\in X}\|A(x)\|.$ Given $A\in C(X,GL(d,\mathbb{R})),$  let $A_n(x)=A(T^{n-1}x)\cdots A(Tx)A(x).$
The triple $(T,X,A)$ is a linear cocycle. For any $x\in X,$ denote $\overline{\chi}(x)=\limsup\limits_{n\to+\infty}\frac1n\log\|A_n(x)\|$ and $\underline{\chi}(x)=\liminf\limits_{n\to+\infty}\frac1n\log\|A_n(x)\|,$ if $\overline{\chi}(x)=\underline{\chi}(x),$ we denote it by $\chi(x).$
We denote $$LI_A:=\{x\in X\mid \lim\limits_{n\to+\infty}\frac1n\log\|A_n(x)\|\,\, \text{ diverges }\}$$ the set of Lyapunov-irregular points, and denote $\mathcal{C}_d,$  $\mathcal{R}_d$ the set of real continuous matrix functions, whose Lyapunov-irregular set is nonempty, residual in $X,$ respectively:
\[
\begin{split}
	\mathcal{C}_d&:=\{A\in C(X,GL(d,\mathbb{R}))\mid LI_A\neq\emptyset\},\\
	\mathcal{R}_d&:=\{A\in C(X,GL(d,\mathbb{R}))\mid LI_A\text{ is residual in } X\}.
\end{split}
\]
Obviously, $\mathcal{R}_d\subset\mathcal{C}_d$.
\begin{maintheorem}\label{maintheorem-3}
	Suppose that $(X,T)$ is a transitive dynamical system.
	\begin{enumerate}
		\item Given a subadditive sequence $\mathfrak{F}=\{f_n\}_{n=1}^{\infty}$ with $f_n\in C(X)$ for any $n\in\mathbb{N},$ if we denote $\mathfrak{F}_{\sup}^T=\sup\limits_{x\in Trans(X,T)}\limsup\limits_{n\to+\infty}\frac{1}{n}f_n(x),$ then $E_{\mathfrak{F}_{\sup}^T}=\{x\in Trans(X,T)\mid \limsup\limits_{n\to+\infty}\frac{1}{n}f_n(x)=\mathfrak{F}_{\sup}^T \}$ is residual in $X.$
		\item Given $A\in C(X,GL(d,\mathbb{R})),$
		if we denote $A_{\sup}^T=\sup\limits_{x\in Trans(X,T)}\overline{\chi}(x)$ and $A_{\inf}^T=\inf\limits_{x\in Trans(X,T)}\underline{\chi}(x),$ then $E_{A_{\sup}^T}=\{x\in Trans(X,T)\mid \overline{\chi}(x)=A_{\sup}^T \}$ and $\tilde{E}_{A_{\inf}^T}=\{x\in Trans(X,T)\mid \underline{\chi}(x)=A_{\inf}^T \}$ are residual in $X.$ Moreover, either $A_{\inf}^T=A_{\sup}^T,$ or $LI_A$ is residual in $X.$
		\item If further $\sharp \mathcal{M}_{Trans}>1,$ then there exist a residual invariant subset $Y\subset X$ and a dense subset $\mathfrak{U}\subset C(X,GL(d,\mathbb{R}))$ such that for any $x\in Y$ and any $A\in \mathfrak{U}$ one has
		\begin{enumerate}
			\item $x$ is extremely transitive that is $x\in Trans(X,T)$ and $V_T(x)=\mathcal{M}_{Trans};$
			\item $x\in LI_A;$
			\item $V_T(x)\cap \mathcal{O}_{T}\neq\emptyset$ provided that $X=M$ is a manifold.
		\end{enumerate}
	\end{enumerate}
\end{maintheorem}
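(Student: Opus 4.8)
\emph{Part (1) (the engine).} I would prove this by the Baire category theorem. Note first that $\mathfrak F_{\sup}^T<\infty$ (iterating subadditivity gives $f_n\le n\|f_1\|$), and that if $\mathfrak F_{\sup}^T=-\infty$ then $E_{\mathfrak F_{\sup}^T}=Trans(X,T)$, which is residual; so assume $\mathfrak F_{\sup}^T\in\mathbb R$. Write $\overline{\mathfrak F}(x):=\limsup_n\frac1nf_n(x)$. The one thing I need is that subadditivity with $m=1$, i.e. $f_{n+1}(x)\le f_1(x)+f_n(Tx)$, forces $\overline{\mathfrak F}(x)\le\overline{\mathfrak F}(Tx)$ for every $x$; thus $\overline{\mathfrak F}$ is nondecreasing along orbits. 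For $N\in\mathbb N^{+}$ and $\varepsilon>0$ the set $W_{N,\varepsilon}:=\bigcup_{n\ge N}\{x\in X:\frac1nf_n(x)>\mathfrak F_{\sup}^T-\varepsilon\}$ is open by continuity of $f_n$, and it is dense: given a nonempty open $U$, pick $y\in Trans(X,T)$ with $\overline{\mathfrak F}(y)>\mathfrak F_{\sup}^T-\varepsilon$ (definition of the supremum) and $k$ with $T^ky\in U$; then $\overline{\mathfrak F}(T^ky)\ge\overline{\mathfrak F}(y)>\mathfrak F_{\sup}^T-\varepsilon$, so some $n\ge N$ has $\frac1nf_n(T^ky)>\mathfrak F_{\sup}^T-\varepsilon$, i.e. $T^ky\in W_{N,\varepsilon}\cap U$. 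Hence $\bigcap_{j,N\ge1}W_{N,1/j}$ is a dense $G_\delta$; on its intersection with the residual set $Trans(X,T)$ one has $\overline{\mathfrak F}(x)\ge\mathfrak F_{\sup}^T$ and, by transitivity, $\overline{\mathfrak F}(x)\le\mathfrak F_{\sup}^T$, so that residual set is contained in $E_{\mathfrak F_{\sup}^T}$.

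\emph{Part (2).} I would apply part (1) to the sequence $f_n:=\log\|A_n\|$: it is continuous and bounded ($A$ is $GL(d,\mathbb R)$-valued and $X$ compact) and subadditive, because $A_{m+n}(x)=A_n(T^mx)A_m(x)$; its $\mathfrak F_{\sup}^T$ is exactly $A_{\sup}^T$, so $E_{A_{\sup}^T}$ is residual. For $\widetilde E_{A_{\inf}^T}$ I would mirror the argument, the role of orbit-monotonicity now being played by the stronger fact that $\underline\chi$ and $\overline\chi$ are $T$-invariant: from $A_n(Tx)=A_{n+1}(x)A(x)^{-1}$ and invertibility of $A$, $|\log\|A_n(Tx)\|-\log\|A_{n+1}(x)\||$ is bounded uniformly in $n$ and $x$. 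So $\bigcup_{n\ge N}\{x:\frac1n\log\|A_n(x)\|<A_{\inf}^T+\varepsilon\}$ is open and, by pushing a near-minimal transitive point into any given open set (its $\underline\chi$ unchanged), dense; intersecting over $\varepsilon=1/j$, $N$, and with $Trans(X,T)$ gives $\widetilde E_{A_{\inf}^T}$ residual (here $A_{\inf}^T>-\infty$ since $\underline\chi\ge-\max_y\log\|A(y)^{-1}\|$). Finally $A_{\inf}^T\le A_{\sup}^T$ always; if the inequality is strict then $E_{A_{\sup}^T}\cap\widetilde E_{A_{\inf}^T}$ is residual and contained in $LI_A$ (on it, $\underline\chi=A_{\inf}^T<A_{\sup}^T=\overline\chi$), so $LI_A$ is residual.

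\emph{Part (3).} Items (a) and (c) are immediate from Theorem \ref{maintheorem-2}, which applies since $(X,T)$ is transitive and $\sharp\mathcal M_{Trans}>1$: it supplies a residual invariant $Y_1\subset X$ of extremely transitive points with $V_T(x)=\mathcal M_{Trans}$ (and $V_T(x)\cap\mathcal O_T\neq\emptyset$ when $X=M$). It remains to build a dense $\mathfrak U$ with $LI_A\supseteq Y$ for all $A\in\mathfrak U$, for a suitable residual invariant $Y\subseteq Y_1$. My plan is: (i) show $\mathcal V:=\{A\in C(X,GL(d,\mathbb R)):LI_A\text{ is residual}\}$ is dense; (ii) take $\mathfrak U$ a countable dense subset of $\mathcal V$ (legitimate as $C(X,GL(d,\mathbb R))$ is separable); (iii) set $Y:=Y_1\cap\bigcap_{A\in\mathfrak U}LI_A$, residual and, since each $\underline\chi,\overline\chi$ is $T$-invariant, invariant. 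For (i), fix $A_0$, $\delta>0$, an extremely transitive $x_*\in Y_1$, and — using $\sharp\mathcal M_{Trans}>1$ — $g_0\in C(X)$ with $c:=\max_{\mu\in\mathcal M_{Trans}}\int g_0\,d\mu-\min_{\mu\in\mathcal M_{Trans}}\int g_0\,d\mu>0$. For small $t>0$ put $A:=e^{tg_0}A_0\in C(X,GL(d,\mathbb R))$; then $\|A-A_0\|\le(e^{t\|g_0\|}-1)\|A_0\|<\delta$, and since scalars commute, $\frac1n\log\|A_n(x_*)\|=\frac{t}{n}\sum_{i=0}^{n-1}g_0(T^ix_*)+\frac1n\log\|A_{0,n}(x_*)\|$, where $A_{0,n}$ denotes the cocycle of $A_0$. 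Since $V_T(x_*)=\mathcal M_{Trans}$ is compact connected, the first summand accumulates exactly on the nondegenerate interval $[\alpha,\beta]:=t\{\int g_0\,d\mu:\mu\in\mathcal M_{Trans}\}$, with $\beta-\alpha=tc\le 2t\|g_0\|$; and as $|\log\|A_{0,n+1}(x_*)\|-\log\|A_{0,n}(x_*)\||$ is bounded, the second summand accumulates on a closed interval $[p,q]$. If $p=q$ the sum accumulates on the nondegenerate interval $[\alpha+p,\beta+p]$; if $p<q$, shrink $t$ further so $2t\|g_0\|<q-p$, and then along times where the second summand tends to $q$ the sum has an accumulation point $\ge q+\alpha$, along times where it tends to $p$ one $\le p+\beta$, and $(q+\alpha)-(p+\beta)=(q-p)-(\beta-\alpha)>0$. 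In either case $\underline\chi_A(x_*)<\overline\chi_A(x_*)$, so $x_*\in LI_A$; since $x_*\in Trans(X,T)$ this gives $A_{\inf}^T<A_{\sup}^T$, hence $LI_A$ residual by part (2), proving (i). Then for $x\in Y$ one gets (a),(c) from $x\in Y_1$ and (b) from $x\in LI_A$ for every $A\in\mathfrak U$.

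\emph{Where the difficulty lies.} The substance is in the two density steps. In part (1) it is noticing the orbit-monotonicity of $\overline{\mathfrak F}$, which is what makes ``transport a near-optimal transitive point into an arbitrary open set'' succeed. In part (3) it is producing a norm-small perturbation of an \emph{arbitrary} cocycle that forces the subadditive sequence $\log\|A_n(x_*)\|$ to genuinely oscillate; one cannot simply read the oscillation off a Lyapunov functional, and the workaround is to exploit that the consecutive differences of $\frac1n\log\|A_{0,n}(x_*)\|$ tend to $0$ — so its accumulation set is a whole interval $[p,q]$ — and to compare $q-p$ with $\|g_0\|$.
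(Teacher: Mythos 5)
Your proposal is correct, and for parts (1) and (2) it is essentially the paper's argument: the paper's Lemma \ref{prop-AE} and Lemma \ref{lemma-AC} are exactly your Baire-category scheme, with the same key observation that subadditivity (resp.\ $\|A_n(Tx)\|\le\|A_{n+1}(x)\|\,\|A(x)^{-1}\|$) makes $\limsup_n\frac1nf_n$ nondecreasing (resp.\ $\liminf_n\frac1n\log\|A_n\|$ nonincreasing) along orbits, so that a near-optimal transitive point can be transported into any open set. Part (3) also has the same skeleton as the paper's (density of $\mathcal{R}_d$ via a scalar perturbation $e^{(\text{small function})}A_0$, then a countable dense $\mathfrak{U}\subset\mathcal{R}_d$ and $Y=Y_1\cap\bigcap_{A\in\mathfrak U}LI_A$), but your density argument for $\mathcal{R}_d$ diverges from the paper's in one genuine way. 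The paper (Proposition \ref{prop-1}) takes $f\in\mathcal{R}$, perturbs to $e^{f/k}A_0$, and \emph{chooses the test point adaptively} in $I(f,T)\cap Trans(X,T)\cap(X\setminus LI_{A_0})$ --- possible because $I(f,T)\cap Trans(X,T)$ is residual while $LI_{A_0}$ is not --- so that the base cocycle is Lyapunov-regular there and the scalar oscillation survives with no interference to control. You instead fix the extremely transitive point $x_*$ once and for all and must then handle possible oscillation of $\frac1n\log\|A_{0,n}(x_*)\|$; you do this correctly by noting its accumulation set is a closed interval $[p,q]$ (consecutive differences tend to $0$) and shrinking $t$ so that $tc\le 2t\|g_0\|<q-p$ when $p<q$. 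Both routes work; the paper's point-selection trick is shorter, while your quantitative version is self-contained at a single point and does not need the observation that a non-residual Borel set must miss some point of a residual set.
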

\begin{remark}
	For any uniquely ergodic system, one has $\mathcal{R}=\emptyset.$ But by \cite{Furm1997} there exist a uniquely ergodic and minimal system $(X,T)$ such that $\mathcal{R}_d\neq\emptyset$ for any $d\geq 2.$ Recently, Carvalho and Varandas gave some criterion to show residual property of $I(f,T)$ in \cite{CarVar2021}. 
\end{remark}

In page 264 of his book \cite{Mane1987}, Ricardo Mañé wrote: “In general, (Lyapunov) regular points are very few from the
topological point of view – they form a set of first category.”
The third named author proved in \cite{Tian201702} that if $f: X \rightarrow X$ is a homeomorphism of a compact metric space $X$ with exponential specification property, $A: X \rightarrow \mathrm{GL}(d, \mathbb{R})$ is a Hölder continuous matrix function, then either $\int \chi(x)d\mu_1=\int \chi(x)d\mu_2$ for any $\mu_1,\mu_2\in \cM^{e}(X,T)$  or the Lyapunov-irregular set $LI_A$ is residual in $X$. According to Theorem \ref{maintheorem-3}, we have the following.
\begin{corollary}\label{coro-4}
	Suppose that $(X,T)$ is a transitive dynamical system. Given $A\in C(X,GL(d,\mathbb{R})),$ if there exist $\mu_1\neq\mu_2\in \cM^{e}(X,T)$ with $S_{\mu_1}=S_{\mu_2}=X$ and $\int \chi(x)d\mu_1<\int \chi(x)d\mu_2,$ then $A\in\mathcal{R}_d.$
\end{corollary}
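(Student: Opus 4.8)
The plan is to reduce everything to the ``moreover'' clause of Theorem \ref{maintheorem-3}(2), which asserts that either $A_{\inf}^T=A_{\sup}^T$ or $LI_A$ is residual in $X$. Thus it suffices to establish the strict inequality $A_{\inf}^T\neq A_{\sup}^T$, and in fact I will show the stronger chain $A_{\inf}^T\le \int\chi(x)\,d\mu_1<\int\chi(x)\,d\mu_2\le A_{\sup}^T$, the middle inequality being exactly the hypothesis. So the whole task is to exhibit one transitive point whose $\liminf$-exponent is $\le\int\chi(x)\,d\mu_1$ and one transitive point whose $\limsup$-exponent is $\ge\int\chi(x)\,d\mu_2$.

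For this I would use two standard facts. First, from $A_{m+n}(x)=A_n(T^mx)A_m(x)$ and submultiplicativity of the operator norm, $g_n(x):=\log\|A_n(x)\|$ is a subadditive sequence of continuous functions, and $g_1$ is bounded because $A$ is continuous on the compact space $X$ with values in $GL(d,\mathbb{R})$; hence, for each ergodic $\mu\in\mathcal{M}^e(X,T)$, Kingman's subadditive ergodic theorem gives that $\lim_{n\to+\infty}\frac1n\log\|A_n(x)\|$ exists for $\mu$-a.e.\ $x$, and by ergodicity this limit is $\mu$-a.e.\ equal to the constant $\int\chi(x)\,d\mu$, so in particular $\overline{\chi}(x)=\underline{\chi}(x)=\int\chi(x)\,d\mu$ on a set of full $\mu$-measure. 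Second, if $\mu$ is ergodic with $S_\mu=X$ then $\mu$-a.e.\ point lies in $Trans(X,T)$: applying Birkhoff's theorem to a countable dense subset of $C(X)$, $\mu$-a.e.\ $x$ satisfies $\frac1n\sum_{i=0}^{n-1}\delta_{T^ix}\to\mu$ in the weak$^*$ topology, and then for every nonempty open $U\subset X$ one has $\mu(U)>0$ (as $S_\mu=X$), whence the portmanteau inequality gives $\liminf_{n}\frac1n\sharp\{0\le i<n:\ T^ix\in U\}\ge\mu(U)>0$, so $orb(x,T)\cap U\neq\emptyset$ and $orb(x,T)$ is dense.

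Combining these: intersect the full $\mu_1$-measure set of $\mu_1$-generic points with the full $\mu_1$-measure set of points where the Kingman limit exists and equals $\int\chi(x)\,d\mu_1$, pick $x_1$ in the (nonempty) intersection; then $x_1\in Trans(X,T)$ and $\underline{\chi}(x_1)=\int\chi(x)\,d\mu_1$, so $A_{\inf}^T\le\int\chi(x)\,d\mu_1$. Symmetrically choose $x_2$ for $\mu_2$ to get $x_2\in Trans(X,T)$ with $\overline{\chi}(x_2)=\int\chi(x)\,d\mu_2$, so $A_{\sup}^T\ge\int\chi(x)\,d\mu_2$. With the hypothesis $\int\chi(x)\,d\mu_1<\int\chi(x)\,d\mu_2$ this forces $A_{\inf}^T\neq A_{\sup}^T$, and Theorem \ref{maintheorem-3}(2) then yields that $LI_A$ is residual in $X$, i.e.\ $A\in\mathcal{R}_d$. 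There is no genuine obstacle once Theorem \ref{maintheorem-3} is in hand; the only steps needing a little care are the passage, via Kingman, from the ergodic integral $\int\chi(x)\,d\mu_i$ to an honest pointwise Lyapunov exponent, and the observation that the hypothesis $S_{\mu_i}=X$ is precisely what upgrades a $\mu_i$-generic point to a transitive point.
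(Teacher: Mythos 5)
Your argument is correct and is essentially the same as the paper's: the paper likewise uses that an ergodic measure with full support gives full measure to $Trans(X,T)$, extracts transitive points $x_1,x_2$ with $\chi(x_1)\leq\int\chi\,d\mu_1<\int\chi\,d\mu_2\leq\chi(x_2)$ (via Kingman, left implicit there), and concludes from the dichotomy in Theorem \ref{maintheorem-3}(2). Your write-up merely supplies the details the paper's two-line proof omits.
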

\begin{corollary}\label{coro-6}
	Suppose that $(X,T)$ is a minimal dynamical system. Given $A\in C(X,GL(d,\mathbb{R})),$ then either there is $c\in \mathbb{R}$ such that $\chi(x)=c$ for any $x\in X,$ or $A\in\mathcal{R}_d.$
\end{corollary}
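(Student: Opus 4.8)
The plan is to deduce Corollary~\ref{coro-6} directly from part (2) of Theorem~\ref{maintheorem-3}, using only the elementary fact that in a minimal system every point is transitive. First I would record that a minimal $(X,T)$ is transitive and that $Trans(X,T)=X$: for any $x\in X$ the orbit closure $\overline{orb(x,T)}$ is a nonempty closed invariant subset of $X$, hence equals $X$ by minimality, so $x\in Trans(X,T)$. (Minimality also forces every $\mu\in\cM(X,T)$ to have full support, which is what makes Corollary~\ref{coro-4} applicable here; see the alternative route below.)

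With $Trans(X,T)=X$ in hand, Theorem~\ref{maintheorem-3}(2) reads: either $A_{\inf}^T=A_{\sup}^T$, or $LI_A$ is residual in $X$. In the latter case $A\in\mathcal{R}_d$ by the very definition of $\mathcal{R}_d$, and we are done. So suppose $A_{\inf}^T=A_{\sup}^T=:c$. Since $Trans(X,T)=X$, we have $\inf_{x\in X}\underline{\chi}(x)=c$ and $\sup_{x\in X}\overline{\chi}(x)=c$; in particular $c$ is a lower bound for $\{\underline{\chi}(x):x\in X\}$ and an upper bound for $\{\overline{\chi}(x):x\in X\}$. Combining this with the trivial inequality $\underline{\chi}(x)\le\overline{\chi}(x)$ yields $c\le\underline{\chi}(x)\le\overline{\chi}(x)\le c$ for every $x\in X$, whence the limit $\chi(x)$ exists and equals $c$ for all $x\in X$, which is the first alternative in the statement.

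An equivalent route, closer in spirit to Corollary~\ref{coro-4}: if there exist $\mu_1\neq\mu_2\in\cM^{e}(X,T)$ with $\int\chi(x)\,d\mu_1\neq\int\chi(x)\,d\mu_2$, then, relabelling so that the first integral is the smaller one, minimality gives $S_{\mu_1}=S_{\mu_2}=X$ and Corollary~\ref{coro-4} yields $A\in\mathcal{R}_d$; otherwise $\int\chi(x)\,d\mu$ takes one common value $c$ over all ergodic $\mu$, and it remains to see that $\chi\equiv c$ on $X$, which again follows from Theorem~\ref{maintheorem-3}(2) exactly as above (alternatively, from the subadditive variational principle identifying $A_{\sup}^T$ and $A_{\inf}^T$ with the supremum and infimum of the top Lyapunov exponent over ergodic measures).

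There is no serious obstacle here: the content is carried entirely by Theorem~\ref{maintheorem-3}(2). The only points needing a word of care are the identification $Trans(X,T)=X$ for minimal systems, and the bookkeeping with the global infimum and supremum of the a priori distinct functions $\underline{\chi}$ and $\overline{\chi}$ — namely that their coincidence forces the pointwise Lyapunov limit to exist and be constant on all of $X$.
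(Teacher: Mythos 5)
Your proof is correct and follows essentially the same route as the paper: note that minimality gives $Trans(X,T)=X$ and then invoke the dichotomy of Theorem~\ref{maintheorem-3}(2). In fact your write-up is slightly more careful than the paper's two-line argument, since you explicitly check that $A_{\inf}^T=A_{\sup}^T=c$ forces $c\le\underline{\chi}(x)\le\overline{\chi}(x)\le c$ for every $x$, covering also the case where the Lyapunov limit might fail to exist at some point.
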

We recall that measure center is $C_T(X):=\overline{\cup_{\mu\in\mathcal{M}(X,T)}S_{\mu}},$ where $$S_\mu:=\{x\in X:\mu(U)>0\ \text{for any neighborhood}\ U\ \text{of}\ x\}$$ is the support of $\mu.$
\begin{remark}\label{rem-1}
	There are many systems with $\sharp \mathcal{M}_{dense}^*>1$.
	\begin{enumerate}
		\item If there exist $\mu_{1},\mu_{2}\in \cM^e(X,T)$ with $S_{\mu_{1}}=S_{\mu_{2}}$ and $\mu_{1}\neq\mu_{2},$ then $\mu_{1},\mu_{2}\in \mathcal{M}_{dense}^*$ and thus $\sharp \mathcal{M}_{dense}^*>1$. For examples,
		\begin{enumerate}
			\item Every minimal and non uniquely ergodic system. See examples of minimal systems with exactly two ergodic measures in \cite[Page 272]{DGS1976}.
			\item Every non uniquely ergodic system satisfying approximate product property and $C_T(X)=X.$ By \cite[Proposition 2.3, Theorem 2.1]{PS2005}, the ergodic measures of $(X,T)$ are entropy-dense, then $\cM^{e}(X,T)$ is dense in $\cM(X,T).$ From \cite[Proposition 5.7]{DGS1976}, $\cM^{e}(X,T)$ is a $G_\delta$-subset of $\cM(X,T).$ Then we have that $\cM^{e}(X,T)$ is residual in $\cM(X,T).$ From \cite[Lemma 5.1]{KOR2016}, there exists an invariant measure
			$\mu$ such that $S_\mu=C_T(X)$ for every dynamical system $(X,T).$ Then we can take $\nu\in \cM(X,T)$ such that $S_\nu=C_T(X)=X.$ Since $\{\mu\in\cM(X,T)\mid S_\mu=X\}$ is either empty or residual in $\cM(X,T)$ from \cite[Proposition 21.11]{DGS1976},  then $\{\mu\in\cM(X,T)\mid S_\mu=X\}$ is residual in $\cM(X,T)$ by $S_\nu=X.$ So $\{\mu\in\cM^{e}(X,T)\mid S_\mu=X\}$ is also residual in $\cM(X,T).$
		\end{enumerate}
	    \item For $(X,T),$ let $\Omega=\overline{\{T^nx_0\mid n\in\mathbb{N}\}}$ for some $x_0\in IR(T),$ then $\sharp \mathcal{M}_{dense}^*(T|_{\Omega})>1$ by $V_T(x_0)\subset \mathcal{M}_{dense}^*(T|_{\Omega})$ and $\sharp V_T(x_0)>1$. In \cite[Theorem 3]{Dowker1953}, Dowker proved that $IR(T)\cap\Omega$ is residual in $\Omega.$ By Theorem \ref{maintheorem-1}, we strengthen Dowker's resul  to that there exist a residual invariant subset $Y\subset \Omega$ and an open and dense subset $\mathfrak{U}\subset C(\Omega)$ such that for any $x\in Y$ and any $f\in \mathfrak{U}$ one has
	    \begin{enumerate}
	    	\item $V_T(x_0)\subset V_T(x).$
	    	\item $\underline{f}(x)\leq \underline{f}(x_0)<\overline{f}(x_0)\leq \overline{f}(x).$
	    \end{enumerate}
    \item Every non-trival homoclinic class. Recall that for a hyperbolic periodic orbit $\mathcal{O}(p),$  the homoclinic class of $\mathcal{O}(p)$ is the set
    $H(p):=\overline{W^{s}(\mathcal{O}(p)) \pitchfork W^{u}(\mathcal{O}(p))}$ $($for example, see \cite[Definition 2.1]{ABC2011} for more detailed definition$).$ Denote $\mu_{p}$ the periodic measure supported on $\mathcal{O}(p),$ then it's easy to see that $H(p)=\overline{W^{s}(\mathcal{O}(p)) \cap H(p)}$ and $W^{s}(\mathcal{O}(p)) \cap H(p)\subset G_{\mu_{p}} \cap H(p).$ This means that $G_{\mu_{p}} \cap H(p)$ is dense in $H(p)$ and thus $\mu_{p}\in \mathcal{M}_{dense}^*.$ Since $H(p)$ is non-trival, there is a hyperbolic periodic orbit $\mathcal{O}(q)$ such that $p$ and $q$ are homoclinically related and $\mu_{p}\neq\mu_{q}.$ Then one has $\mu_{q}\in \mathcal{M}_{dense}^*$ by $H(p)=H(q).$ So $\sharp \mathcal{M}_{dense}^*>1$.
    \item All the above results can be applied to flow if one replaces Birkhoff averages by the suitable means obtained by integration along the orbits of the flow. We consider the well-known example of a planar flow with divergent time averages attributed to Bowen $($see \cite{Takens1995}$)$. In the example, there are two fixed
    hyperbolic saddle points A, B, and heteroclinic orbits connecting the two saddle points. We denote the expanding and contracting eigenvalues of the linearized vector field in $A$ by $\alpha_{+}$ and $-\alpha_{-}$ and in $B$ by $\beta_{+}$ and $-\beta_{-}$. The condition on the eigenvalues which makes the cycle attracting is that the contracting eigenvalues dominate: $\alpha_{-} \beta_{-}>$ $\alpha_{+} \beta_{+}.$
    Denote
    $$
    \lambda=\alpha_{-} / \beta_{+} \text {and } \sigma=\beta_{-} / \alpha_{+},
    $$
    then their values are positive and their product is bigger than 1.
    For any point $x$ in the interior of the plane curve formed by the heteroclinic orbits connecting the fixed
    hyperbolic saddle points A, B, $V_T(x)=\{t\mu_{1}+(1-t)\mu_{2}\mid t\in[0,1]\}$ where $\mu_1=\frac{\sigma}{1+\sigma} \delta_A+\frac{1}{1+\sigma} \delta_B$ and $\mu_{2}=\frac{\lambda}{1+\lambda} \delta_B+\frac{1}{1+\lambda} \delta_A.$ Since $\lambda\sigma>1,$ then $\mu_{1}\neq\mu_{2}.$ By the arbitrariness of $x,$ we have $\{t\mu_{1}+(1-t)\mu_{2}\mid t\in[0,1]\}\subset \mathcal{M}_{dense}^*$ and thus $\sharp \mathcal{M}_{dense}^*>1$.
	\end{enumerate}
\end{remark}

Next, we will introduce a new abstract property in base space of dynamical systems which give a sufficient condition for that $\sharp \mathcal{M}_{dense}^*>1$.
Before that, we recall specification-like properties.
The specification property ($SP$) introduced by Bowen \cite{Bowen1971} plays an important role in hyperbolic dynamics and thermodynamical formalism. Based on this property, a number of classical results have been established, including growth rate of periodic orbits \cite{Bowen1971}, uniqueness of equilibrium states \cite{Bowen1974} and so on.
In recent years this notion served as a basis for many developments in the theory of dynamical systems, especially in multifractal analysis \cite{Thompson2010,Tian2017,CT2021,DTY2015,LW2014}. For systems with specification property, Li and Wu showed in \cite{LW2014} that $I(f,T)$ is either empty or residual for any $f\in C(X)$. Dong, Tian and Yuan proved in \cite{DTY2015} that for systems with asymptotic average shadowing property and measure center coinciding with $X$, $I(f,T)$ is either empty or residual for any $f\in C(X)$ and $\bigcap\limits_{f\in\mathcal{C}} I(f,T)$ is residual in $X$. 
Weak variations of specification property, called specification-like properties, have been introduced to study broader classes of systems, such as the almost specification property ($ASP$)\cite{Date1990,Marcus1980}, the approximate product property ($ApPP$)\cite{PS2005} and the almost product property ($AlPP$)\cite{PS2007,KKO2014}. It is known that (see \cite{Yama2009} for more information)
$$SP\Rightarrow ASP\Rightarrow ApPP\ and\ SP\Rightarrow AlPP\Rightarrow ApPP.$$
We say that $(X,T)$ has the approximate product property ($ApPP$), if for any $\varepsilon>0$, $\delta_1>0$ and $\delta_2>0$, there exists $N(\varepsilon,\delta_1,\delta_2)\in\mathbb{N^{+}}$ such that for any $n\geq N$ and any sequence $\{x_i\}_{i=1}^{+\infty}$ of $X$, there exist a sequence of integers $\{h_i\}_{i=1}^{+\infty}$ and $x\in
X$ satisfying $h_1=0$, $n\leq h_{i+1}-h_i\leq n(1+\delta_2)$ and $$\#\{0\leq j\leq n-1\mid d(T^{h_i+j}x,T^jx_i)\geq\varepsilon\}\leq\delta_1n.$$

Now, we will introduce a new property of dynamical systems. It is weaker than most known versions of specification-like properties.
For any $\varepsilon>0$, any $x,y\in X$ and any $n\in\mathbb{N^+}$, we can define two integer value functions $m=m(\varepsilon,x,y,n)\in\mathbb{N}$, $g=g(\varepsilon,x,y,n)\in\mathbb{N}$ such that $g(\varepsilon,x,y,n)\leq n$. Denote $\Lambda_n:=\{0,\cdots,n-1\}$.

\begin{definition}
	Suppose that $(X,T)$ is a dynamical system. We say $(X,T)$ has $m$-$g$-product property, if for any $\varepsilon>0$, $x_1,x_2\in X$, there exists $N(\varepsilon,x_1,x_2)\in\mathbb{N^{+}}$ and for any positive integer $n\geq N$, there exist $z\in X,$ $p\in\mathbb{N},$ $\Lambda\subseteq \Lambda_n$, such that $0\leq p\leq m(\varepsilon,x_1,x_2,n),$ $|\Lambda_n\setminus \Lambda|\leq g(\varepsilon,x_1,x_2,n),$ and
	$$d(z,x_1)<\varepsilon,\ d(T^{p+j}z,T^jx_2)<\varepsilon\ \text{for any}\ j\in \Lambda.$$
	If $(X,T)$ has $m$-$g$-product property and is transitive, we say $(X,T)$ is $m$-$g$-transitive. If $m\equiv0$ or $g\equiv0$, we will omit $m$ or $g$.
\end{definition}
For any transitive system, we can find an integer function $m(\varepsilon,x,y,n)$ such that it is a $m$-transitive system. However, we can't obtain $\sharp \mathcal{M}_{dense}^*>1$ if $(X,T)$ is only transitive and not uniquely ergodic. 	In \cite[Section 4]{KW1981}, Katznelson and Weiss constructed a transitive dynamical system $(X,T)$ such that
\begin{enumerate}
	\item $(X,T)$ is not uniquely ergodic.
	\item every point in $X$ is a generic point of an ergodic measure.
	\item there exists a uniquely ergodic measure $\mu$ with $S_\mu=X.$
	\item For any $\nu\in \mathcal{M}^e(X,T)\setminus \{\mu\},$ $G_{\nu}\cap Trans(X,T)=\emptyset.$
\end{enumerate}
Then $\mathcal{M}_{dense}^*=\mathcal{M}_{Trans}=\{\mu\}$ and thus $\sharp \mathcal{M}_{dense}^*=1.$ So we need to put some restrictions on $m$-$g$-product property if we want to obtain $\sharp \mathcal{M}_{dense}^*>1.$

\begin{maintheorem}\label{maintheorem-4}
	Suppose that $(X,T)$ is a dynamical system.
	\begin{enumerate}
		\item Let $0<\kappa<\frac{1}{2},$ if $(X,T)$ is not uniquely ergodic and is $m$-$g$-transitive with $$\liminf\limits_{n\to+\infty}\frac{m(\varepsilon,x,y,n)+g(\varepsilon,x,y,n)}{m(\varepsilon,x,y,n)+n}<\kappa< \frac{1}{2}$$ for any $\varepsilon>0,$ any $x,y\in X.$ Then one has $\sharp \mathcal{M}_{dense}^*>1;$
		\item If $(X,T)$ is not uniquely ergodic and satisfies $m$-$g$-product property with $\liminf\limits_{n\to+\infty}\frac{m(\varepsilon,x,y,n)+g(\varepsilon,x,y,n)}{n}=0$ for any $\varepsilon>0,$ any $x,y\in X$, then $\mathcal{M}^{e}(X,T)\subset\mathcal{M}_{dense}^*$ and thus $\sharp \mathcal{M}_{dense}^*>1;$
		\item If $(X,T)$ satisfies approximate product property and $C_T(X)=X$, then $\mathcal{M}_{dense}^*=\mathcal{M}(X,T)$ and $(X,T)$ is $m$-$g$-transitive with $\lim\limits_{n\to+\infty}\frac{m(\varepsilon,x,y,n)+g(\varepsilon,x,y,n)}{n}=0$ for any $\varepsilon>0,$ any $x,y\in X;$
		\item If $(X,T)$ is minimal, then $\mathcal{M}^{e}(X,T)\subset\mathcal{M}_{dense}^*$ and for any $\varepsilon>0$, there is $m(\epsilon)\in\mathbb{N}$ such that for any $x_1,x_2\in X$, any $n\in\mathbb{N^{+}},$ and any $\varepsilon'>0$ there is $l\in\mathbb{N}$ and $z\in X$ such that $0\leq l\leq m(\varepsilon)$, $$d(z,x_1)<\varepsilon,\ d(T^{p+j}z,T^jx_2)<\varepsilon'\ \text{for any}\ j\in \Lambda_n.$$ In particular, $(X,T)$ is $m$-transitive with $\lim\limits_{n\to+\infty}\frac{m(\varepsilon)}{n}=0$. 
	\end{enumerate}
\end{maintheorem}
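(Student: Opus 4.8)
\smallskip
\noindent\emph{Sketch of the intended argument.}
Parts (2), (3) and the first assertions of (3)--(4) all reduce to the statement $\mathcal{M}^{e}(X,T)\subseteq\mathcal{M}_{dense}^{*}$, which I would obtain by a generic-point construction. Fix an ergodic $\mu$, a generic point $y$ of $\mu$ (so $V_T(y)=\{\mu\}$) and a nonempty open $U$; pick $x_1\in U$. Starting from $z_1:=x_1$ I would repeatedly apply the $m$-$g$-product property with rapidly decreasing tolerances $\varepsilon_k\downarrow0$ to glue $z_{k-1}$ with $y$: this produces $z_k$ with $d(z_k,z_{k-1})<\varepsilon_k$ and an orbit segment of $z_k$, starting at some position $p_k\le m(\varepsilon_k,z_{k-1},y,n_k)$ and of length $n_k$, that $\varepsilon_k$-shadows $y,Ty,\dots,T^{n_k-1}y$ off at most $g(\varepsilon_k,z_{k-1},y,n_k)$ exceptional indices. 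Choosing $\sum_k\varepsilon_k$ small keeps $z:=\lim z_k$ inside $U$, and choosing each $n_k$ large forces the total length $L_{k-1}$ used before stage $k$ to be $o(n_k)$; under the hypothesis $\frac{m+g}{n}\to0$ of (2) also $p_k$ and the exceptional set are $o(n_k)$, so the empirical measure of $z$ along $[0,p_k+n_k)$ equals $\frac1{n_k}\sum_{j<n_k}\delta_{T^{j}y}+o(1)\to\mu$. Hence $\mu\in V_T(z)$, i.e.\ $z\in G^{\mu}\subseteq G^{\mu}_{\varepsilon}$, so $G^{\mu}_{\varepsilon}$ meets every open set and $\mu\in\mathcal{M}_{dense}^{*}$. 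For (2), not being uniquely ergodic yields at least two ergodic measures, so $\sharp\mathcal{M}_{dense}^{*}>1$.

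For (4) minimality makes this immediate: every $T$-invariant measure has support $X$, so for ergodic $\mu$ the set of its generic points has full $\mu$-measure, hence is dense, giving $\mathcal{M}^{e}(X,T)\subseteq\mathcal{M}_{dense}^{*}$. For the uniform bound $m(\varepsilon)$: from minimality $T$ is surjective and $X=\bigcup_{l=0}^{L}T^{-l}V$ for some finite $L$ and any nonempty open $V$; writing $y=T^{L}y'$ and choosing $l_0\le L$ with $T^{l_0}y'\in V$ gives $y=T^{L-l_0}(T^{l_0}y')\in\bigcup_{l\le L}T^{l}(\overline V)$, so $\bigcup_{l\le L}T^{l}(\overline V)=X$; applying this with $V=B_{\varepsilon}(x_1)$ and taking a finite $\varepsilon$-net produces $m(\varepsilon)$ (independent of $x_1$) with $\bigcup_{l\le m(\varepsilon)}T^{l}(\overline{B_{\varepsilon}(x_1)})=X$ for every $x_1$. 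Then for any $x_1,x_2$ there is $z$ with $d(z,x_1)<\varepsilon$ and $l\le m(\varepsilon)$ with $T^{l}z=x_2$, which is exactly the stated property (with any $\varepsilon'$; indeed $\varepsilon'=0$), and $m$-transitivity with $m(\varepsilon)/n\to0$ follows since $m(\varepsilon)$ is constant in $n$. For (3) I would read the $m$-$g$-product property off the approximate product property by shadowing the sequence consisting of $x_1$ followed by many length-$N$ chunks $T^{0}x_2,T^{N}x_2,\dots$ with $\delta_1,\delta_2$ tiny and the number of chunks large, which makes both the waiting time before the $x_2$-part and the number of exceptional indices $o(n)$ relative to the total shadowed length $n$; transitivity of $(X,T)$ follows from the approximate product property together with $C_T(X)=X$. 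Then $\mathcal{M}^{e}(X,T)\subseteq\mathcal{M}_{dense}^{*}$ as in (2), and since $\mathcal{M}_{dense}^{*}$ is weak$^{*}$-closed while $\overline{\mathcal{M}^{e}(X,T)}=\mathcal{M}(X,T)$ by the entropy-density of ergodic measures under the approximate product property (\cite{PS2005}), we obtain $\mathcal{M}_{dense}^{*}=\mathcal{M}(X,T)$.

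Part (1) is the heart. Since $m$-$g$-transitive systems are transitive, Theorem~\ref{maintheorem-2} gives $\mathcal{M}_{dense}^{*}=\mathcal{M}_{Trans}$, so it suffices to produce one transitive point $z$ with $\sharp V_T(z)>1$. Fix distinct ergodic $\mu_1\ne\mu_2$ (they exist since $(X,T)$ is not uniquely ergodic), generic points $y_1,y_2$, and a dense sequence $(x_j)$. Iterating the $m$-$g$-product property as above, but now alternating between gluing $y_1$ and $y_2$ and occasionally steering the orbit near each $x_j$, and at every stage taking $n_k$ along the subsequence realizing $\frac{m+g}{m+n}<\kappa$ and large enough that $L_{k-1}=o(n_k)$, I would build a transitive $z$ whose orbit, on each ``$y_i$-block'', $\varepsilon_k$-shadows $y_i,\dots,T^{n_k-1}y_i$ off at most $g_k$ indices, where $\frac{q_k+g_k}{q_k+n_k}<\kappa$ with $q_k\le m_k$ the length of the junk preceding that block. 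Passing to the limit along the block-end times of the $y_1$-blocks yields a measure $\omega_1\in V_T(z)$, and similarly $\omega_2\in V_T(z)$. Within each block the shadowed positions have density $\frac{n_k-g_k}{q_k+n_k}>1-\kappa$ (this uses the full inequality $m_k+g_k<\kappa(m_k+n_k)$), so $\omega_i\ge(1-\kappa)\lambda_i$, where $\lambda_i$ is a weak$^{*}$ limit of empirical measures $\frac1{|\Lambda|}\sum_{j\in\Lambda}\delta_{T^{j}y_i}$ over index sets $\Lambda\subseteq\{0,\dots,n_k-1\}$ of density $>1-\kappa$; since $\frac1{n_k}\sum_{j<n_k}\delta_{T^{j}y_i}\to\mu_i$ dominates each such empirical up to the factor $(1-\kappa)^{-1}$, we get $\lambda_i\ll\mu_i$. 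As $\mu_1$ and $\mu_2$ are distinct ergodic measures they are mutually singular, so picking a Borel set $A$ with $\mu_1(A)=1$, $\mu_2(A)=0$ gives $\lambda_1(A)=1$, $\lambda_2(A)=0$, whence $\omega_1(A)\ge1-\kappa>\tfrac12>\kappa\ge\omega_2(A)$; thus $\omega_1\ne\omega_2$ and $\sharp V_T(z)>1$.

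The obstacles I anticipate are: (i) the bookkeeping in the iterated gluing — controlling tolerances so $z$ stays in $U$ and, crucially, choosing the $n_k$ along the $\liminf$-subsequence large enough that all earlier data is negligible and the junk satisfies $q_k\le m_k$ with the stated block fraction; (ii) in (3), converting the approximate product property, whose shadowing blocks have fixed length and an unavoidable waiting time, into an $m$-$g$-product with $(m+g)/n\to0$, which is what forces the ``many short chunks with $\delta_i\to0$'' arrangement; and (iii) in (1), recognizing that crude $\int f\,d(\cdot)$ estimates do not suffice (their errors scale with $\|f\|$, not with $\int f\,d\mu_1-\int f\,d\mu_2$) and that one must instead use the mutual singularity of distinct ergodic measures together with the absolute continuity of weak$^{*}$ limits of sub-empirical measures along positive-density times — and this is precisely where the hypothesis $\kappa<\tfrac12$ enters.
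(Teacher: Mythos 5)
Your parts (2) and (4) are essentially correct and close in spirit to the paper (the paper proves (2) by showing each open set $U(N_0,V)$ with $V\supset V_T(y')$ is dense via a \emph{single} application of the gluing property and letting Baire do the rest, rather than by an iterated construction, but both routes work; your surjectivity argument for the uniform $m(\varepsilon)$ in (4) is a clean alternative to the paper's syndetic-return-time argument). The two genuine gaps are in (1) and (3).

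In (1) your construction does produce, in every open set, a point $z$ with two limit measures $\omega_1,\omega_2$ separated on a Borel set $A$, but the theorem needs $\sharp\mathcal{M}_{Trans}>1$, i.e.\ a \emph{transitive} point with two limit measures, and neither of your two ways of getting one closes. The ``steering'' step fails as stated: the $m$-$g$-product property only gives $d(T^{p+j}z,T^jx_j)<\varepsilon$ for $j$ in a set $\Lambda$ of density $>1-\kappa$, and $0$ may lie in the exceptional set, so you cannot force the orbit of $z$ into a prescribed ball $B(x_j,\delta)$ (a positive-density subset of a $\delta$-dense orbit segment need not be $\delta'$-dense). The alternative is to intersect a residual set with $Trans(X,T)$, but the separation $\omega_1(A)\geq 1-\kappa>\kappa\geq\omega_2(A)$ on a Borel set does not define a $G_\delta$ condition on $z$ and gives no uniform weak$^*$ gap. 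To repair this you must convert the mutual singularity of $\mu_1,\mu_2$ into a \emph{continuous} function: by regularity pick compact $B_1\subset A$, $B_2\subset X\setminus A$ with $\mu_i(B_i)>\frac{1+2\kappa}{2}$ and set $f=\frac{d(\cdot,B_2)}{d(\cdot,B_1)+d(\cdot,B_2)}$, so that $\int f d\mu_1-\int f d\mu_2>2\kappa=2\kappa\,\mathrm{osc}(f)$; for \emph{this} $f$ the ``crude'' estimate you dismiss in your obstacle (iii) does suffice, because the gluing error $\frac{m+g}{m+n}\,\mathrm{osc}(f)<\kappa\,\mathrm{osc}(f)$ is strictly less than half the signal. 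This is exactly the paper's device (the class $C_{2\kappa}(X)$ of $2\kappa$-controlled functions, Lemma \ref{lemma-9} and Lemma \ref{lemma-10}): one then shows the $G_\delta$ sets $\bigcap_n U_n$ and $\bigcap_n V_n$ of points whose Birkhoff averages of $f$ drop below and rise above two fixed levels are each dense, so $I(f,T)$ is residual, and Lemma \ref{lemma-7} supplies a transitive irregular point. Your measure-theoretic detour through $\lambda_i\ll\mu_i$ is a correct observation but does not by itself interface with transitivity.

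In (3) the derivation of the $m$-$g$-product property from the approximate product property is the step you flag but do not resolve, and it is a real obstruction: ApPP only gives a point $x$ whose orbit shadows the orbit of $x_1$ off a $\delta_1$-proportion of times, so you have no control at time $0$ and cannot conclude $d(z,x_1)<\varepsilon$. The paper's resolution (Theorem \ref{thm-3}) is to first prove $C_T(X)=\overline{AP}$ under ApPP (Lemma \ref{measure-center}, which itself rests on the density of ergodic measures supported on minimal sets), replace $x_1$ by a nearby almost periodic point $y$, and use the syndeticity of $N(y,B(y,\varepsilon/3))$ to find a return time $q_n$ that avoids the exceptional set, then take $z=T^{q_n}x$; the shift $q_n$ is what produces the $m$-term. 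Without some such idea the construction does not start. Your closing step for (3), namely $\mathcal{M}^e(X,T)\subset\mathcal{M}_{dense}^*$ plus compactness of $\mathcal{M}_{dense}^*$ plus entropy-density of ergodic measures, is a valid alternative to the paper's direct proof that $G^{\mathcal{M}(X,T)}$ is residual.
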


Next, we give an example which satisfies Theorem \ref{maintheorem-4}(1) but does not satisfy Theorem \ref{maintheorem-4}(2)-(4).
\begin{maintheorem}\label{maintheorem-5}
	For any $\kappa>0$, there is a $m$-transitive dynamical system $(X,T)$ such that
	\begin{enumerate}
		\item $\lim\limits_{n\to+\infty}\frac{m(\varepsilon,x,y,n)}{n}=\kappa$, and for any $0<\tilde{\kappa}<\kappa$, $\tilde{m}(\varepsilon,x,y,n)$ with $\lim\limits_{n\to+\infty}\frac{\tilde{m}(\varepsilon,x,y,n)}{n}=\tilde{\kappa}$, $(X,T)$ is not $\tilde{m}$-transitive;
		\item The set of periodic points of $(X,T)$ is dense in $X$ and thus $C_T(X)=X$;
		\item $(X,T)$ has positive topological entropy;
		\item $(X,T)$ does not satisfy the approximate product property;
		\item $(X,T)$ is not minimal and is not uniquely ergodic.
	\end{enumerate}
\end{maintheorem}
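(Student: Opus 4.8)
The plan is to realize $(X,T)$ as a subshift of $\{0,1\}^{\mathbb{Z}}$ in which every long block of $1$'s is forced to be flanked by blocks of $0$'s of length $\kappa$ times that of the block. Concretely, let $T$ be the left shift and set
\[
X:=\Big\{\omega\in\{0,1\}^{\mathbb{Z}}\ :\ \text{every maximal \emph{finite} block of $1$'s of length $\ell$ in $\omega$ is immediately preceded and followed by at least $\lceil\kappa\ell\rceil$ zeros}\Big\}.
\]
This $X$ is a closed, shift–invariant, nonempty subset of $\{0,1\}^{\mathbb Z}$ (it contains the fixed point $\underline 0=\cdots000\cdots$), so $(X,T)$ is a dynamical system. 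Items (2), (3), (5) are then quick, because padding a word by zeros only relaxes the defining condition. (i) For any $u,v$ in the language $\mathcal L(X)$ (the words occurring in points of $X$) one has $u\,0^{R}v\in\mathcal L(X)$ once $R\ge\lceil\kappa(|u|+|v|)\rceil$, so $(X,T)$ is transitive; and for $u\in\mathcal L(X)$ and $R$ large $(u\,0^{R})^{\infty}$ is a periodic point containing $u$, so periodic points are dense and $C_T(X)=X$ (item (2)). (ii) The subshift of finite type consisting of those $\omega$ whose $1$'s are pairwise at distance $\ge\lceil\kappa\rceil+1$ is contained in $X$ and has positive entropy, so $h_{\mathrm{top}}(X,T)>0$ (item (3)). (iii) Finally $\{\underline 0\}$ is a proper closed invariant set and the periodic measure on $(1\,0^{\lceil\kappa\rceil})^{\infty}\in X$ differs from $\delta_{\underline 0}$, so $X$ is neither minimal nor uniquely ergodic (item (5)).

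For the $m$-product property in item (1) I would fix $\varepsilon>0$, choose $k=k(\varepsilon)$ so that agreement on coordinates $[-k,k]$ forces distance $<\varepsilon$, and, given $x_1,x_2\in X$ and $n$ large, build $z$ by: completing with $\underline 0$ on $(-\infty,-k-1]$, copying $x_1$ on $[-k,k]$, inserting $Z:=\lceil\kappa(n+2k)\rceil$ zeros, copying the window $x_2[-k,\,n-1+k]$, and continuing with $\underline 0$. Since $Z$ exceeds $\kappa$ times the length of the leftmost $1$-block of that window (which has length $\le n+2k$), and each copied window is internally admissible, a block-by-block check gives $z\in X$; moreover $d(z,x_1)<\varepsilon$ and $d(T^{p+j}z,T^{j}x_2)<\varepsilon$ for all $j\in\{0,\dots,n-1\}$ with $p:=Z+2k+1$. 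Taking $g\equiv0$ and $m(\varepsilon,x,y,n):=\lceil\kappa(n+2k)\rceil+2k+1$ shows $(X,T)$ is $m$-transitive with $\lim_{n\to\infty}m(\varepsilon,x,y,n)/n=\kappa$.

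The crux is a lower estimate for $p$, which yields simultaneously the second half of item (1) and item (4). Take $x_1$ with $x_1[0]=1$ and $x_1[j]=0$ for $j\ne0$, and $x_2$ with $x_2[j]=1$ for $j\ge0$ and $x_2[j]=0$ for $j<0$; both lie in $X$. I claim: if $z\in X$, $p\in\mathbb N$, $\Lambda\subseteq\Lambda_n$ with $|\Lambda_n\setminus\Lambda|\le g$, $d(z,x_1)<\varepsilon$ and $d(T^{p+j}z,T^{j}x_2)<\varepsilon$ for all $j\in\Lambda$, then $p\ge\kappa n-(1+\kappa)g-C(\varepsilon)$ with $C(\varepsilon)=(1+\kappa)(k(\varepsilon)+1)$. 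Indeed, agreement with $x_1$ forces $z[0]=1$ and $z[j]=0$ for $1\le|j|\le k$, while agreement with the shifted $x_2$ forces $z[p+j]=1$ for each $j\in\Lambda$ with $j\ge k$; hence $z[0,p+n-1]$ carries at least $n-g-k$ ones. On the other hand $z[-1]=z[1]=0$, so no $1$-block of $z$ enters $[0,p+n-1]$ from the left and the block through $0$ is $\{0\}$; writing the $1$-blocks to the right of $0$ as $B_1,\dots,B_r$ of lengths $\ell_1,\dots,\ell_r$, each $B_i$ costs $\ge\lceil\kappa\ell_i\rceil$ preceding zeros lying inside the interval and consecutive blocks are separated by $\ge\kappa\max(\ell_i,\ell_{i+1})$ zeros, so $1+(1+\kappa)\sum_i\ell_i\le p+n$, i.e. the $1$'s inside $[0,p+n-1]$ occupy at most $1+\tfrac{p+n-1}{1+\kappa}$ coordinates. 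Comparing the two counts gives the claim. With $g\equiv0$: if $\lim_n\tilde m(\varepsilon,x,y,n)/n=\tilde\kappa<\kappa$ the claim fails for large $n$, so $(X,T)$ is not $\tilde m$-transitive — the second half of item (1). In general the claim forces $\liminf_n\frac{m+g}{n}\ge\frac{\kappa}{1+\kappa}>0$ for any $(m,g)$ witnessing an $m$-$g$-product property, so, since $C_T(X)=X$, Theorem~\ref{maintheorem-4}(3) rules out the approximate product property, hence also $SP$, $ASP$ and $AlPP$ (item (4)); likewise Theorem~\ref{maintheorem-4}(2) shows the example also fails the hypotheses of (2), (3), (4), while for $\kappa<1$ it does satisfy that of Theorem~\ref{maintheorem-4}(1) since there $\frac{m+g}{m+n}\to\frac{\kappa}{1+\kappa}<\frac12$.

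I expect the main obstacle to be this packing estimate. One has to argue carefully that the pinned values $z[0]=1$, $z[\pm1,\dots,\pm k]=0$ genuinely prevent any long $1$-block of $z$ inside $[0,p+n-1]$ from being "financed'' by zeros lying to the left of $0$, so that the $(1+\kappa)^{-1}$ density bound is available, and to keep the $O(k)$ and $O(g)$ error terms under control so that the conclusion $\liminf(m+g)/n\ge\kappa/(1+\kappa)$ still holds; using $x_2$ with an \emph{infinite} tail of $1$'s (rather than a long finite block) is what makes the bound tight. The block-by-block admissibility checks in the transitivity, periodic-point and $m$-product constructions are routine but numerous, and should be organized around the single principle "inserting zeros and shortening blocks preserves membership in $X$''.
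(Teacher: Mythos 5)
Your overall strategy is the same as the paper's (a subshift in which a block of symbols forces a proportionally long gap of zeros in front of it, with the gap length giving both the upper and the lower bound on $m$), but your concrete construction has a genuine gap at exactly the point you identify as the crux. The defining condition of your $X$ constrains only \emph{maximal finite} blocks of $1$'s, so any point containing an \emph{infinite} block of $1$'s pays no tax at all, and this destroys the packing lower bound $p\ge\kappa n-(1+\kappa)g-C(\varepsilon)$. Concretely, with your test points ($x_1$ a single $1$ at the origin, $x_2$ equal to $1$ on $[0,\infty)$ and $0$ elsewhere), take $P=\max(k(\varepsilon)+1,\lceil\kappa\rceil+1)$ and let $z$ have $z[0]=1$, $z[j]=0$ for $j<0$ and for $1\le j\le P-1$, and $z[j]=1$ for $j\ge P$. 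Its maximal blocks of $1$'s are $\{0\}$ (length $1$, flanked by enough zeros) and $[P,\infty)$, which is infinite and hence exempt, so $z\in X$; moreover $z$ agrees with $x_1$ on $[-k,k]$ and $T^{P+j}z$ agrees with $T^jx_2$ on $[-k,k]$ for every $j\ge 0$. Thus for every $n$ the pair $(x_1,x_2)$ is traced with $p=P$ bounded and $\Lambda=\Lambda_n$, the claimed inequality fails, and with it both the second half of item (1) and item (4) (your route to (4) via Theorem~\ref{maintheorem-4}(3) needs the same lower bound). The step in your packing argument that fails is ``each $B_i$ costs $\ge\lceil\kappa\ell_i\rceil$ preceding zeros'': this is simply not required of the rightmost block when it is infinite, and the very choice of $x_2$ with an infinite tail of $1$'s — which you rely on to make the bound tight — is what invites the shadowing point $z$ to use an infinite, tax-free block.

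The paper closes this loophole by working over the alphabet $\{-1,0,1\}$ and defining $X$ by genuinely \emph{local} forbidden words: $ij$ with $ij<0$, and $v_{j+1}0^k v_j\cdots v_1$ with all $v_i\ne 0$ and $1\le k\le \lfloor\kappa j\rfloor+1$. Because the second rule applies to \emph{every} run of $j$ nonzero symbols following a nonempty gap (not just maximal blocks), a finite gap of $k$ zeros can be followed by at most roughly $k/\kappa$ consecutive nonzero symbols, so there is no infinite-block exemption; and the sign rule forces a nonempty gap between the test words $w=1$ and $u=(-1)^n$, giving the clean lower bound $|v|\ge \lfloor\kappa n\rfloor+2$ for any connecting word $v$. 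If you want to salvage your two-letter version you would have to replace ``maximal finite block'' by a forbidden-word formulation that bounds the number of $1$'s that may follow a gap of given length (and then re-choose the test pair accordingly); as written, the construction does not prove items (1) and (4). The remaining items (2), (3), (5) and the upper-bound ($m$-transitivity) construction in your proposal are essentially correct and parallel the paper's.
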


\begin{figure}[h]\caption{Relationships between transitivity,  approximate product property, minimal property and $m$-$g$-product property.}\label{fig-1}
	\begin{center}
		\begin{tikzpicture}
			\draw (0,0) rectangle (7,5);
			\draw (3.5,2.4) ellipse (3.3cm and 2.1cm);
			\draw (2.0,2.9) ellipse (1.3cm and 0.8cm);
			\draw (4.9,2.9) ellipse (1.3cm and 0.8cm);
			\node at(3.5,5.3){$DS_{trans}$};
			\node at(1.9,2.9){$DS_{m}$};
			\node at(4.9,2.9){$DS_{app}$};
			\node at(3.5,1.2){$DS_{mg}$};
		\end{tikzpicture}
	\end{center}
\end{figure}
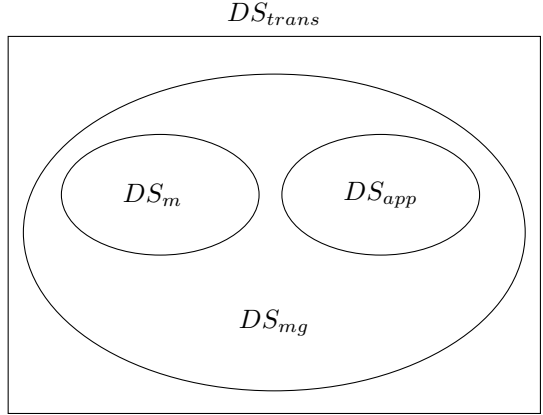
Finally, we describe the relationships between transitivity, approximate product property, minimal property and $m$-$g$-product property as Figure \ref{fig-1}. Let
$DS_{trans}$ denote the set of transitive systems;  $DS_{mg}$ denote the set of systems which are $m$-$g$-transitive with $\lim\limits_{n\to+\infty}\frac{m(\varepsilon,x,y,n)+g(\varepsilon,x,y,n)}{n}=0,$ for any $\varepsilon>0,$ any $x,y\in X$; $DS_{app}$ denote the set of systems which satisfy $C_T(X)=X$ and approximate product property; $DS_{m}$ denote the set of minimal systems.

\textbf{Organization of this paper.} In Section \ref{section-2} we study properties of saturated-like sets, relationship between irregular points and saturated-like sets, and give the proof of Theorem \ref{maintheorem-1}. In Section \ref{section-3} we show that $\mathcal{M}_{dense}^*=\mathcal{M}_{Trans}$ for transitive systems and give the proof of Theorem \ref{maintheorem-2}. In section \ref{section-4}, we consider irregular points of subadditive sequences and cocycles, and we give the proof of Theorem \ref{maintheorem-3}.
In section \ref{section-5}, we discuss relationship between $m$-$g$-product property and approximate product property and minimality, and we give the proof of Theorem \ref{maintheorem-4}.
In section \ref{section-6}, we give an example which satisfies $m$-$g$-product property but does not satisfy approximate product property and minimality.

\section{Saturated-like sets and proof of Theorem \ref{maintheorem-1}}\label{section-2}
\subsection{Saturated-like sets}
For any $K\subset \mathcal{M}(X,T),$ denote $^{K}G=\{x\in X\mid V_T(x)\cap K\neq\emptyset\}.$
Denote a ball in $\mathcal{M}(X)$ by $\mathcal{B}(\nu, \zeta)=\{\mu \in \mathcal{M}(X)\mid \rho(\nu, \mu) < \zeta\}.$ For any $K\subset \mathcal{M}(X,T),$  denote $\mathcal{B}(K, \zeta)=\{\mu \in \mathcal{M}(X)\mid \rho(\mu, K) < \zeta\}.$ 
\begin{theorem}\label{thm-1}
	Suppose that $(X,T)$ is a dynamical system. Then for any $K\subset \mathcal{M}(X,T),$ 
	\begin{enumerate}
		\item if $K\neq \emptyset$ and $\{x\in X\mid V_T(x)\cap \mathcal{B}(K, \varepsilon)\neq\emptyset \}$ is dense in $X$ for any $\varepsilon>0,$ then $^{\overline{K}}G$ is residual in $X;$
		\item if $\{x\in X\mid V_T(x)\cap \mathcal{B}(\mu, \varepsilon)\neq\emptyset \}$ is dense in $X$ for any $\varepsilon>0$ and any $\mu\in K,$ then $G^{\overline{K}}$ is residual in $X.$
	\end{enumerate}
\end{theorem}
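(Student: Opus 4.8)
\textit{Proof proposal.} The plan is to realise both $^{\overline{K}}G$ and $G^{\overline{K}}$ as countable intersections of dense $G_\delta$ subsets of $X$ and then apply the Baire category theorem, which is available since $X$ is compact metric. For $n\in\mathbb{N}^+$ write $\mathcal{E}_n(x):=\frac1n\sum_{i=0}^{n-1}\delta_{T^ix}\in\mathcal{M}(X)$; because $T$ is continuous, $x\mapsto\mathcal{E}_n(x)$ is continuous for the weak$^*$ topology, and $\mu\mapsto\rho(\mu,C)$ is $1$-Lipschitz for every closed $C\subseteq\mathcal{M}(X)$. Hence $W_{n,\varepsilon}(C):=\{x\in X:\rho(\mathcal{E}_n(x),C)<\varepsilon\}$ is open, so $U_\varepsilon(C):=\bigcap_{N\ge1}\bigcup_{n\ge N}W_{n,\varepsilon}(C)$ is a $G_\delta$ set. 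The core observation I would establish first is an elementary ``closing-up'' lemma: for closed $C$ and any $x\in X$ one has $x\in\bigcap_{m\ge1}U_{1/m}(C)$ if and only if $V_T(x)\cap C\neq\emptyset$. Indeed, membership in $\bigcap_m U_{1/m}(C)$ means that for each $m$ there are arbitrarily large $n$ with $\rho(\mathcal{E}_n(x),C)<1/m$; choosing $n_m\to\infty$ with $\rho(\mathcal{E}_{n_m}(x),C)<1/m$ and extracting a weak$^*$-convergent subsequence by compactness of $\mathcal{M}(X)$, the limit lies in $V_T(x)$ (as $n_m\to\infty$) and in $C$ (as $C$ is closed and the distances vanish); the converse is immediate, since any $\nu\in V_T(x)\cap C$ is a limit of $\mathcal{E}_{n_j}(x)$ along some $n_j\to\infty$. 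Taking $C=\{\mu\}$ gives the single-measure version: $x\in\bigcap_m U_{1/m}(\{\mu\})\iff\mu\in V_T(x)$.

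For part (1) I would take $C=\overline{K}$, so that the lemma yields $^{\overline{K}}G=\bigcap_{m\ge1}U_{1/m}(\overline{K})$, a $G_\delta$ set; it remains to show each $U_\varepsilon(\overline{K})$ is dense, and this is exactly where the hypothesis is used. Since $\rho(\mu,K)=\rho(\mu,\overline{K})$ for every $\mu$, we have $\mathcal{B}(K,\delta)=\mathcal{B}(\overline{K},\delta)$, so $D_\delta:=\{x:V_T(x)\cap\mathcal{B}(\overline{K},\delta)\neq\emptyset\}$ is dense for every $\delta>0$; and if $x\in D_{\varepsilon/2}$, picking $\nu\in V_T(x)$ with $\rho(\nu,\overline{K})<\varepsilon/2$ and $\mathcal{E}_{n_j}(x)\to\nu$ gives $\rho(\mathcal{E}_{n_j}(x),\overline{K})<\varepsilon$ for large $j$, hence $x\in U_\varepsilon(\overline{K})$. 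Thus $D_{\varepsilon/2}\subseteq U_\varepsilon(\overline{K})$, the latter is dense, and $^{\overline{K}}G$ is a countable intersection of dense $G_\delta$ sets, hence residual.

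For part (2) I would fix a countable set $\{\mu_\ell\}_{\ell\ge1}$ dense in $\overline{K}$, which exists because $\mathcal{M}(X)$ is separable metrizable. Since $V_T(x)$ is closed, $\overline{K}\subseteq V_T(x)$ iff $\mu_\ell\in V_T(x)$ for all $\ell$, so by the single-measure form of the lemma $G^{\overline{K}}=\bigcap_{\ell\ge1}\bigcap_{m\ge1}U_{1/m}(\{\mu_\ell\})$, again a $G_\delta$ set. To see that $U_\varepsilon(\{\mu_\ell\})$ is dense, I would first note that the density hypothesis, stated for $\mu\in K$, passes to every $\mu_\ell\in\overline{K}$: given $\delta>0$, pick $\mu\in K$ with $\rho(\mu,\mu_\ell)<\delta/2$; then $\{x:V_T(x)\cap\mathcal{B}(\mu,\delta/2)\neq\emptyset\}\subseteq\{x:V_T(x)\cap\mathcal{B}(\mu_\ell,\delta)\neq\emptyset\}$, and the left-hand set is dense. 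The same closing-up step as in part (1) then gives $\{x:V_T(x)\cap\mathcal{B}(\mu_\ell,\varepsilon/2)\neq\emptyset\}\subseteq U_\varepsilon(\{\mu_\ell\})$, so $U_\varepsilon(\{\mu_\ell\})$ is dense and $G^{\overline{K}}$ is residual. The whole argument is essentially routine Baire-category bookkeeping; the only points that need a little care are the continuity of $x\mapsto\mathcal{E}_n(x)$ (so that the building blocks are genuinely open), the compactness step inside the closing-up lemma, and, for part (2), the reduction to a countable dense subset of $\overline{K}$ together with the passage of the density hypothesis from $K$ to that subset, which I expect to be the only mildly delicate point.
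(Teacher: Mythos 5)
Your argument is correct, and for part (1) it is essentially the paper's proof in different notation: your $U_\varepsilon(\overline{K})$ is the intersection over $N_0$ of the paper's open sets $U(N_0,\mathcal{B}(K,\varepsilon))$, the density of these sets comes from the same inclusion $\{x\mid V_T(x)\cap\mathcal{B}(K,\varepsilon/2)\neq\emptyset\}\subset U_\varepsilon(\overline{K})$, and your ``closing-up'' lemma is exactly the compactness extraction that the paper invokes implicitly when it writes $\bigcap_{N_0}U(N_0,\mathcal{B}(K,\tfrac1m))\subset{}^{\overline{\mathcal{B}(K,1/m)}}G$. The only genuine divergence is in part (2): the paper writes $G^{\overline{K}}=\bigcap_i P(U_i)$ using a system of shrinking balls $V_i\subset\overline{V}_i\subset U_i$ with $\operatorname{diam}(U_i)\to 0$ such that every point of $\overline{K}$ lies in infinitely many $V_i$, whereas you decompose $G^{\overline{K}}=\bigcap_{\ell,m}U_{1/m}(\{\mu_\ell\})$ over a countable dense subset $\{\mu_\ell\}$ of $\overline{K}$, using that $V_T(x)$ is closed. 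The two decompositions do the same job; yours is a little cleaner to verify (no need to check that the ball system detects containment of $\overline{K}$ in $V_T(x)$), and you are right that the one delicate point in either version is transferring the density hypothesis from measures in $K$ to the nearby targets --- the paper does this by choosing $\nu\in K\cap V_i$ directly (possible since $V_i$ is open and meets $\overline{K}$), while you approximate each $\mu_\ell$ by some $\mu\in K$ within $\delta/2$; both are valid.
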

\begin{proof}
	For any nonempty open set $V\subset\mathcal{M}(X,T)$ and $N_0\in\mathbb{N}$, let $$U(N_0,V):=\bigcup_{N>N_{0}}\left\{x \in X \mid \frac{1}{N} \sum_{j=0}^{N-1} \delta_{T^{j} x} \in V\right\},$$ it's easy to obtain the following lemma.
	\begin{lemma}\label{lemma-13}
		$U(N_0,V)$ is an open set.
	\end{lemma}
	(1) If $\{x\in X\mid V_T(x)\cap V\neq\emptyset\}$ is dense in $X,$ then $U(N_0,V)$ is an open and dense set in $X$ by $\{x\in X\mid V_T(x)\cap V\neq\emptyset\}\subset U(N_0,V).$ Then for any $m\in\mathbb{N^+}$ and $N_0\in\mathbb{N},$ $U(N_0,\mathcal{B}(K, \frac{1}{m}))$ is an open and dense set in $X.$ Note that $\bigcap_{N_0=0}^{+\infty}U(N_0,\mathcal{B}(K, \frac{1}{m}))\subset \{x\in X\mid V_T(x)\cap \overline{\mathcal{B}(K, \frac{1}{m})}\neq\emptyset \},$ then  $^{\overline{\mathcal{B}(K, \frac{1}{m})}}G$ is residual in $X.$ Since $\bigcap_{m=0}^{+\infty} {~}^{\overline{\mathcal{B}(K, \frac{1}{m})}}G\subset {~}^{\overline{K}}G,$ we have $^{\overline{K}}G$ is residual in $X.$
	
	(2) If $K=\emptyset,$ then $G^{\overline{K}}=X.$ Otherwise, $\overline{K}\subset \cM(X,T)$ be a nonempty compact set. We can find open balls $\{V_i\}_{i\in\mathbb{N^+}}$ and $\{U_i\}_{i\in\mathbb{N^+}}$ in $\cM(X)$ such that
	\begin{enumerate}[(a)]
		\item $V_i\subset\overline{V}_i\subset U_i$;
		\item $diam(U_i)\longrightarrow0$;
		\item $V_i\cap \overline{K}\neq\emptyset$;
		\item each point of $\overline{K}$ lies in infinitely many $V_i$.
	\end{enumerate}
	Put $P(U_i):=\{x\in X\mid V_T(x)\cap U_i\neq\emptyset\}$, then $\bigcap\limits_{i=1}^{+\infty}P(U_i)= G^{\overline{K}},$  and $$
	P\left(U_{i}\right) \supset \bigcap_{N_{0}=1}^{+\infty} \bigcup_{N>N_{0}}\left\{x \in X \mid \frac{1}{N} \sum_{j=0}^{N-1} \delta_{T^{j} x} \in V_{i}\right\}
	.$$ Since $U(N_0,V_i)$ is an open set by Lemma \ref{lemma-13}, by Baire category theorem if we have
	$$\forall N_0\in\mathbb{N^+},\ \forall i\in\mathbb{N^+},\ U(N_0,V_i)\text{ is dense in }X
	,$$
	then $G^{\overline{K}}$ is residual in $X.$  
	
	Since $V_i$ is open, there exists $\nu\in\cM(X,T)\cap V_i$ such that $\nu\in K.$ Then $\{x\in X\mid V_T(x)\cap \mathcal{B}(\nu, \varepsilon)\neq\emptyset \}$ is dense in $X$ for any $\varepsilon>0.$ It's easy to see that $\{x\in X\mid V_T(x)\cap \mathcal{B}(\nu, \varepsilon)\neq\emptyset \}\subset U(N_0,V_i)$ for small enough $\varepsilon>0.$ This means $U(N_0,V_i)$ is dense in $X.$ So $G^{\overline{K}}$ is residual in $X.$
\end{proof}

According to Theorem \ref{thm-1}, we have some corollaries.
\begin{corollary}\label{coro-2}
	Suppose that $(X,T)$ is a dynamical system. If $X=M$ is a manifold, then $^{\mathcal{O}_{T}}G$ is residual in $X.$
\end{corollary}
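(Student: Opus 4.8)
The plan is to obtain this as an immediate consequence of Theorem \ref{thm-1}(1), applied with $K=\mathcal{O}_{T}$. For this I would first recall the basic structural facts about physical-like measures on a compact manifold $M$ (see \cite{CTV2019} and the references therein): the set $\mathcal{O}_{T}$ is nonempty, it is a weak${}^{*}$-compact subset of $\mathcal{M}(X,T)$ (so in particular $\overline{\mathcal{O}_{T}}=\mathcal{O}_{T}$), and for Lebesgue-almost every $x\in M$ one has $V_{T}(x)\cap\mathcal{O}_{T}\neq\emptyset$ — indeed $V_{T}(x)\subseteq\mathcal{O}_{T}$ for Lebesgue-a.e.\ $x$, which is stronger than what is needed here.

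Granting these, the argument is short. Since $M$ is a manifold, the reference volume (Lebesgue) measure has full support, so any subset of $M$ of full Lebesgue measure is dense in $X$. Hence the set ${}^{\mathcal{O}_{T}}G=\{x\in X\mid V_{T}(x)\cap\mathcal{O}_{T}\neq\emptyset\}$ is dense in $X$, and a fortiori, for every $\varepsilon>0$ the larger set $\{x\in X\mid V_{T}(x)\cap\mathcal{B}(\mathcal{O}_{T},\varepsilon)\neq\emptyset\}$ is dense in $X$. Now $\mathcal{O}_{T}$ is a nonempty subset of $\mathcal{M}(X,T)$, so Theorem \ref{thm-1}(1) applies with $K=\mathcal{O}_{T}$ and yields that ${}^{\overline{\mathcal{O}_{T}}}G$ is residual in $X$; since $\overline{\mathcal{O}_{T}}=\mathcal{O}_{T}$, this is exactly the assertion that ${}^{\mathcal{O}_{T}}G$ is residual in $X$.

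The only non-formal ingredient is importing the cited facts about $\mathcal{O}_{T}$: its nonemptiness and weak${}^{*}$-compactness, and the fact that a Lebesgue-typical point has its set of empirical limit measures meeting $\mathcal{O}_{T}$. I expect no genuine obstacle here — everything else is bookkeeping, the one point worth stating explicitly being that on a manifold ``full Lebesgue measure'' implies ``dense,'' which is precisely what converts the measure-theoretic input into the topological density hypothesis required by Theorem \ref{thm-1}(1).
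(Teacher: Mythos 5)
Your proposal is correct and follows essentially the same route as the paper: both invoke the nonemptiness and compactness of $\mathcal{O}_{T}$ together with the fact that $V_{T}(x)\subset\mathcal{O}_{T}$ for Lebesgue-a.e.\ $x$ (the paper cites \cite{CE2011} for these), deduce density of $\{x\in X\mid V_{T}(x)\cap\mathcal{B}(\mathcal{O}_{T},\varepsilon)\neq\emptyset\}$ from full support of Lebesgue measure on a manifold, and conclude via Theorem \ref{thm-1}(1) with $\overline{\mathcal{O}_{T}}=\mathcal{O}_{T}$. No gaps.
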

\begin{proof}
	By \cite[Theorem 1.3]{CE2011} and \cite[Theorem 1.5]{CE2011}, $\mathcal{O}_{T}$ is a nonempty compact set and $\{x\in X\mid V_T(x)\subset \mathcal{O}_{T}\}$ has total Lebesgue measure. Then $\{x\in X\mid V_T(x)\cap \mathcal{B}(\mathcal{O}_{T}, \varepsilon)\neq\emptyset \}$ is dense in $X$ for any $\varepsilon>0.$ So by Theorem \ref{thm-1}(1), $^{\mathcal{O}_{T}}G$ is residual in $X.$
\end{proof}

\begin{lemma} \label{lemma-1}
	Suppose that $(X,T)$ is a dynamical system. Then $G^{K}=G^{\overline{K}}$ for any $K\subset \mathcal{M}(X,T)$.
\end{lemma}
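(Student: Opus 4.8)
The statement reduces to two inclusions, the harder of which is still essentially immediate. The plan is to first observe $G^{\overline{K}}\subseteq G^{K}$: this is trivial, since $K\subseteq\overline{K}$, so $\overline{K}\subseteq V_T(x)$ forces $K\subseteq V_T(x)$, i.e.\ $x\in G^{\overline{K}}$ implies $x\in G^{K}$.

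For the reverse inclusion $G^{K}\subseteq G^{\overline{K}}$, the single ingredient I would use is that for every $x\in X$ the set $V_T(x)$ is closed in $\mathcal{M}(X,T)$; indeed, by \cite[Proposition 3.8]{DGS1976} recalled in the introduction, $V_T(x)$ is always a nonempty compact connected subset of $\mathcal{M}(X,T)$, hence in particular a closed subset of $\mathcal{M}(X)$. Thus if $x\in G^{K}$, i.e.\ $K\subseteq V_T(x)$, then passing to closures and using monotonicity of closure together with $\overline{V_T(x)}=V_T(x)$ gives $\overline{K}\subseteq\overline{V_T(x)}=V_T(x)$, which is exactly $x\in G^{\overline{K}}$. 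Combining the two inclusions yields $G^{K}=G^{\overline{K}}$.

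There is no real obstacle here: the only non-formal fact needed is the compactness (equivalently closedness) of $V_T(x)$, which is already quoted earlier in the paper. The argument works verbatim for arbitrary $K\subseteq\mathcal{M}(X,T)$, with no separability, invariance, or nonemptiness hypothesis on $K$ required (when $K=\emptyset$ both sets equal $X$). I would state the proof in two lines exactly as above.
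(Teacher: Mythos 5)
Your proof is correct and follows exactly the paper's argument: the inclusion $G^{\overline{K}}\subseteq G^{K}$ from $K\subseteq\overline{K}$, and the reverse inclusion from the closedness of $V_T(x)$ guaranteed by \cite[Proposition 3.8]{DGS1976}. Nothing to add.
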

\begin{proof}
	On one hand, by $K\subset \overline{K},$ we have $G^{\overline{K}}\subset G^{K}.$ On the other hand, since $V_T(x)$ is a nonempty compact connected set for any $x\in X$ by \cite[Proposition 3.8]{DGS1976}, then $V_T(y)\supset \overline{K}$ for any $y\in G^{K}.$ This means $G^{K}\subset G^{\overline{K}}.$
\end{proof}

Recall that $\mathcal{M}_{dense}^*:=\{\mu\in\mathcal{M}(X,T)\mid  G^{\mu}_{\varepsilon} \text{ is dense in } X \text{ for any }\varepsilon>0\},$ then by Theorem \ref{thm-1}(2) and Lemma \ref{lemma-1} one has the following.
\begin{corollary}\label{coro-1}
	Suppose that $(X,T)$ is a dynamical system, then $G^{\mathcal{M}_{dense}^*}$ is residual in $X.$
\end{corollary}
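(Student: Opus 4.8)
The plan is to deduce this directly from Theorem \ref{thm-1}(2) applied with $K=\mathcal{M}_{dense}^*$, followed by an appeal to Lemma \ref{lemma-1}. If $\mathcal{M}_{dense}^*=\emptyset$ there is nothing to prove, since then $G^{\mathcal{M}_{dense}^*}=X$ is trivially residual; so we may assume $\mathcal{M}_{dense}^*\neq\emptyset$, and it is a subset of $\mathcal{M}(X,T)$ by definition, so Theorem \ref{thm-1} is applicable.

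The only thing to verify before invoking Theorem \ref{thm-1}(2) is its hypothesis: for every $\mu\in\mathcal{M}_{dense}^*$ and every $\varepsilon>0$, the set $\{x\in X\mid V_T(x)\cap\mathcal{B}(\mu,\varepsilon)\neq\emptyset\}$ is dense in $X$. The key observation is that this set contains $G^\mu_{\varepsilon/2}$. Indeed, since $V_T(x)$ is a nonempty compact set by \cite[Proposition 3.8]{DGS1976}, the infimum defining $\rho(\mu,V_T(x))$ is attained; hence if $x\in G^\mu_{\varepsilon/2}$, that is $\rho(\mu,V_T(x))\leq\varepsilon/2<\varepsilon$, there exists $\nu\in V_T(x)$ with $\rho(\mu,\nu)<\varepsilon$, i.e. $V_T(x)\cap\mathcal{B}(\mu,\varepsilon)\neq\emptyset$. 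By the definition of $\mathcal{M}_{dense}^*$, the set $G^\mu_{\varepsilon/2}$ is dense in $X$, and therefore so is the larger set $\{x\in X\mid V_T(x)\cap\mathcal{B}(\mu,\varepsilon)\neq\emptyset\}$.

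With the hypothesis in hand, Theorem \ref{thm-1}(2) gives that $G^{\overline{\mathcal{M}_{dense}^*}}$ is residual in $X$. Finally, Lemma \ref{lemma-1} with $K=\mathcal{M}_{dense}^*$ gives $G^{\mathcal{M}_{dense}^*}=G^{\overline{\mathcal{M}_{dense}^*}}$, so $G^{\mathcal{M}_{dense}^*}$ is residual in $X$, which is the assertion. There is no substantial obstacle here; the only mildly delicate point is the passage between the strict inequality built into the ball $\mathcal{B}(\mu,\varepsilon)$ and the non-strict inequality in the definition of $G^\mu_\varepsilon$, which is dispatched by the compactness of $V_T(x)$ together with replacing $\varepsilon$ by $\varepsilon/2$.
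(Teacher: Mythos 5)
Your proof is correct and follows exactly the paper's route: the paper derives Corollary \ref{coro-1} in one line from Theorem \ref{thm-1}(2) and Lemma \ref{lemma-1}, and you supply the same derivation together with the (easy) verification of the density hypothesis. The only remark is that the appeal to compactness of $V_T(x)$ is unnecessary: $\rho(\mu,V_T(x))\leq\varepsilon/2<\varepsilon$ already forces some $\nu\in V_T(x)$ with $\rho(\mu,\nu)<\varepsilon$ by the definition of infimum.
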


For any $\mu\in \mathcal{M}(X,T),$ by Theorem \ref{thm-1}(2) one has the following.
\begin{corollary}\label{coro-3}
	Suppose that $(X,T)$ is a dynamical system. For any $\mu\in \mathcal{M}(X,T),$ if $\{x\in X\mid \rho(\mu, V_T(x))< \varepsilon \}$ is dense in $X$ for any $\varepsilon>0,$ then $\{x\in X\mid \rho(\mu, V_T(x))=0 \}$ is residual in $X.$
\end{corollary}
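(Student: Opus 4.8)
The plan is to deduce this directly from Theorem \ref{thm-1}(2) applied to the singleton $K=\{\mu\}$, after matching the two ways of phrasing the hypothesis and conclusion. First I would record the elementary translation between the ``distance to $V_T(x)$'' language used in the statement and the ``intersection with a ball'' language used in Theorem \ref{thm-1}. Since $\rho(\mu,V_T(x))=\inf_{\nu\in V_T(x)}\rho(\mu,\nu)$, the inequality $\rho(\mu,V_T(x))<\varepsilon$ holds precisely when there is $\nu\in V_T(x)$ with $\rho(\mu,\nu)<\varepsilon$, that is, precisely when $V_T(x)\cap\mathcal{B}(\mu,\varepsilon)\neq\emptyset$. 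Consequently, the hypothesis that $\{x\in X\mid \rho(\mu,V_T(x))<\varepsilon\}$ is dense in $X$ for every $\varepsilon>0$ is literally the hypothesis of Theorem \ref{thm-1}(2) for the set $K=\{\mu\}$ (the quantifier ``for any $\mu\in K$'' being vacuous beyond this one measure).

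Next I would identify the conclusion. Because $V_T(x)$ is a nonempty compact (connected) subset of $\mathcal{M}(X,T)$ by \cite[Proposition 3.8]{DGS1976}, the infimum defining $\rho(\mu,V_T(x))$ is attained, so $\rho(\mu,V_T(x))=0$ if and only if $\mu\in V_T(x)$, i.e. if and only if $x\in G^{\{\mu\}}$. Since $\overline{\{\mu\}}=\{\mu\}$, Theorem \ref{thm-1}(2) applies and gives that $G^{\overline{\{\mu\}}}=G^{\{\mu\}}=\{x\in X\mid \rho(\mu,V_T(x))=0\}$ is residual in $X$, which is exactly the assertion. (Alternatively one may invoke Lemma \ref{lemma-1} to pass between $G^K$ and $G^{\overline K}$, but for a singleton this is unnecessary.)

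I do not expect any genuine obstacle: the corollary is a specialization of Theorem \ref{thm-1}(2), and the only point requiring a word of care is the compactness argument showing that ``distance zero to $V_T(x)$'' coincides with ``membership in $V_T(x)$'', which is what lets the set in the statement be recognized as $G^{\{\mu\}}$. Everything else is a direct citation.
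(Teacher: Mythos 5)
Your proposal is correct and is exactly the paper's argument: the paper derives this corollary by citing Theorem \ref{thm-1}(2) with $K=\{\mu\}$, and your translation between $\rho(\mu,V_T(x))<\varepsilon$ and $V_T(x)\cap\mathcal{B}(\mu,\varepsilon)\neq\emptyset$, together with the observation that compactness of $V_T(x)$ identifies $\{\rho(\mu,V_T(x))=0\}$ with $G^{\{\mu\}}$, just makes explicit what the paper leaves implicit.
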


\subsection{Saturated-like sets and irregular points}
For any $K\subset \mathcal{M}(X,T),$ we denote $\mathcal{C}_{K}:=\{f\in C(X)\mid \inf\limits_{\mu\in K}\int f\mathrm{d\mu}<\sup\limits_{\mu\in K}\int f\mathrm{d\mu}\}.$ Then we have the following theorem.
\begin{theorem}\label{thm-2}
	Suppose that $(X,T)$ is a dynamical system. Then for any $K\subset \mathcal{M}(X,T)$ with $\sharp K>1,$ $\mathcal{C}_{K}$ is  a nonempty open and dense subset in $C(X),$ and $G^{K}\subset \bigcap\limits_{f\in\mathcal{C}_{K}} I(f,T)\subset IR(T)\subsetneq X.$ If further $G^{K}$ is dense in $X,$ then $\mathcal{C}_{K}\subset \mathcal{R}.$ If further $G_{K}$ is residual in $X,$ then $\mathcal{C}_{K}= \mathcal{R}.$
\end{theorem}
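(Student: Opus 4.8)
The plan is to start from an elementary description of the Birkhoff extremes. For every $x\in X$ and $f\in C(X)$ the empirical measures $\mu_n=\frac1n\sum_{i=0}^{n-1}\delta_{T^ix}$ live in the compact metrizable space $\mathcal M(X)$, their limit set is by definition $V_T(x)$ (nonempty and closed, hence compact), and $\nu\mapsto\int f\,d\nu$ is weak$^{*}$-continuous; since $\frac1n\sum_{i=0}^{n-1}f(T^ix)=\int f\,d\mu_n$, this yields $\overline f(x)=\max_{\nu\in V_T(x)}\int f\,d\nu$ and $\underline f(x)=\min_{\nu\in V_T(x)}\int f\,d\nu$, an identity I will use repeatedly. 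For the first assertion: $\mathcal C_K\neq\emptyset$ because $\sharp K>1$ gives distinct $\mu_1,\mu_2\in K$, and any $f$ with $\int f\,d\mu_1\neq\int f\,d\mu_2$ satisfies $\inf_{\mu\in K}\int f\,d\mu<\sup_{\mu\in K}\int f\,d\mu$; $\mathcal C_K$ is open because $f\mapsto\sup_{\mu\in K}\int f\,d\mu-\inf_{\mu\in K}\int f\,d\mu$ is $2$-Lipschitz on $C(X)$, so $\mathcal C_K$ is the preimage of $(0,+\infty)$ under a continuous functional; and $\mathcal C_K$ is dense because any $g\notin\mathcal C_K$, i.e. with $\int g\,d\mu$ constant over $\mu\in K$, is approximated by $g+tf_0$ ($f_0\in\mathcal C_K$ fixed, $t\downarrow0$), which lies in $\mathcal C_K$ for every $t>0$.

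For the chain of inclusions, take $x\in G^K$, so $K\subset V_T(x)$, and $f\in\mathcal C_K$. The identity above gives $\underline f(x)=\min_{\nu\in V_T(x)}\int f\,d\nu\le\inf_{\mu\in K}\int f\,d\mu<\sup_{\mu\in K}\int f\,d\mu\le\max_{\nu\in V_T(x)}\int f\,d\nu=\overline f(x)$, hence $x\in I(f,T)$; this is exactly $G^K\subset\bigcap_{f\in\mathcal C_K}I(f,T)$, and since $\mathcal C_K\neq\emptyset$ this intersection is contained in $I(f,T)\subset IR(T)$ for any fixed $f\in\mathcal C_K$. Finally $IR(T)\subsetneq X$: choosing any ergodic $\mu\in\mathcal M^{e}(X,T)$ (which exists since $\mathcal M(X,T)$ is nonempty, compact and convex), separability of $C(X)$ together with Birkhoff's theorem produces a $\mu$-generic point $x$ with $V_T(x)=\{\mu\}$, so $\underline f(x)=\overline f(x)$ for all $f\in C(X)$ and $x\notin IR(T)$.

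For the third assertion, suppose $G^K$ is dense. For every $\mu\in K$ and $\varepsilon>0$ one has $G^K\subset\{x\in X\mid V_T(x)\cap\mathcal B(\mu,\varepsilon)\neq\emptyset\}$, so the latter set is dense; Theorem~\ref{thm-1}(2) then gives that $G^{\overline K}$ is residual, and $G^{\overline K}=G^K$ by Lemma~\ref{lemma-1}, so $G^K$ is residual. For $f\in\mathcal C_K$ the inclusion $G^K\subset I(f,T)$ from the previous paragraph shows $I(f,T)$ contains a dense $G_\delta$, i.e. $f\in\mathcal R$; hence $\mathcal C_K\subset\mathcal R$.

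For the fourth assertion, assume $G_K$ is residual. Since $G_K\subset G^K$, the set $G^K$ is residual, in particular dense, so $\mathcal C_K\subset\mathcal R$ by the previous step. For the reverse inclusion, let $f\in\mathcal R$ and suppose toward a contradiction that $f\notin\mathcal C_K$, i.e. $\int f\,d\mu\equiv c$ on $K$. Every $x\in G_K$ has $V_T(x)=K$, so $\underline f(x)=\overline f(x)=c$ and $x\notin I(f,T)$; thus $G_K\cap I(f,T)=\emptyset$, which is impossible since $G_K$ and $I(f,T)$ are both residual in the Baire space $X$ and two residual sets must intersect. Hence $f\in\mathcal C_K$, giving $\mathcal R\subset\mathcal C_K$ and therefore $\mathcal C_K=\mathcal R$. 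The only step that is not pure bookkeeping is the passage ``$G^K$ dense $\Rightarrow$ $G^K$ residual'' in the third assertion: this is not automatic from density, but follows by feeding $G^K$ into the already-established Theorem~\ref{thm-1}(2) and invoking Lemma~\ref{lemma-1}; everything else reduces to the $\max/\min$ identity and the Baire category theorem.
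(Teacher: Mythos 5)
Your proof is correct and follows essentially the same route as the paper's: the same perturbation argument for density and openness of $\mathcal{C}_K$, the same appeal to Theorem~\ref{thm-1}(2) and Lemma~\ref{lemma-1} to upgrade density of $G^K$ to residuality, and the same use of a point in $G_K\cap I(f,T)$ (you phrase it as a contradiction, the paper argues directly) for the final equality. The only additions are cosmetic: you make the identity $\overline{f}(x)=\max_{\nu\in V_T(x)}\int f\,d\nu$ explicit and supply the standard generic-point argument for $IR(T)\subsetneq X$, which the paper leaves implicit.
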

\begin{proof}
	Since $\sharp K>1,$ then there are $\mu\neq\nu\in K$ and $f\in C(X)$ such that $\int f\mathrm{d\mu}\neq\int f \mathrm{d\nu}.$ Then $f\in \mathcal{C}_{K}$ and thus $\mathcal{C}_{K}$ is nonempty.  We fix $f_0\in \mathcal{C}_{K}$ and $\mu_{1},\mu_{2}\in K$ with $\int f_0\mathrm{d\mu_1}< \int f_0\mathrm{d\mu_2}.$
	On one hand, we show $\mathcal{C}_{K}$  is  dense in $C(X)\setminus \mathcal{C}_{K}.$ Fix $f\in C(X)\setminus \mathcal{C}_{K}.$ Take $f_n=\frac 1n f_0 + f,\,n\geq 1.$ Then $f_n$ converges to $f$ in sup norm. By construction, it is easy to check that $f_n\in \mathcal{C}_{K}.$ Thus $\mathcal{C}_{K}$  is  dense in $C(X).$
	
	On the other hand, we prove that $\mathcal{C}_{K}$ is open. Fix $f\in \mathcal{C}_{K}.$  Then   there must exist two different invariant measures $\mu_1,\mu_2\in K$ such that $ \int f d\mu_1< \int f d\mu_2.$  By continuity of sup norm, we can take an open neighborhood of $f$, denoted by $U (f)$, such that for any $g\in U (f),$  $\int g d\mu_1< \int g d\mu_2.$  Then $U (f)\subset \mathcal{C}_{K}.$ So $\mathcal{C}_{K}$ is open in $C(X).$
	
	For any $x\in G^{K},$ one has $K\subset V_T(x).$ Then $\liminf\limits_{n\to+\infty}\frac{1}{n}\sum\limits_{i=0}^{n-1}f(T^ix)\leq \inf\limits_{\mu\in K}\int f\mathrm{d\mu}<\sup\limits_{\mu\in K}\int f\mathrm{d\mu}\leq \limsup\limits_{n\to+\infty}\frac{1}{n}\sum\limits_{i=0}^{n-1}f(T^ix)$ for any $f\in \mathcal{C}_{K}.$ Thus $x\in I(f,T).$ So we have $G^{K}\subset \bigcap\limits_{f\in\mathcal{C}_{K}} I(f,T)\subset IR(T)\subsetneq X.$
	
	If further $G^{K}$ is dense in $X,$ by Theorem \ref{thm-1}(2) and Lemma \ref{lemma-1} one has  $G^{K}$ is residual in $X.$ Then $\bigcap\limits_{f\in\mathcal{C}_{K}} I(f,T)$ is residual in $X$ which implies $\mathcal{C}_{K}\subset \mathcal{R}.$
	
	If further $G_{K}$ is residual in $X,$ we show $\mathcal{R}\subset\mathcal{C}_{K}.$ For any $f\in \mathcal{R},$ $I(f,T)\cap G_K$ is residual in $X.$ Take $x\in I(f,T)\cap G_K,$ then there exist $\mu_1,\mu_{2}\in V_T(x)=K$ such that $\int fd\mu_1\neq \int fd\mu_2.$ So $f\in \mathcal{C}_{K}.$
\end{proof}

\subsection{Proof of Theorem \ref{maintheorem-1}} Let $Y=G^{\mathcal{M}_{dense}^*}$ and $\mathfrak{U}=\mathcal{C}_{\mathcal{M}_{dense}^*}.$ Then by Corollary \ref{coro-1} and Theorem \ref{thm-2}, $Y$ is residual in $X$ and $\mathfrak{U}$ is open and dense in $C(X).$ By Theorem \ref{thm-2},    for any $x\in Y$ and any $f\in \mathfrak{U}$ one has $\mathcal{M}_{dense}^*\subset V_T(x)$ and $\underline{f}(x)\leq \inf\limits_{\mu\in \mathcal{M}_{dense}^*}\int f d\mu<\sup\limits_{\mu\in \mathcal{M}_{dense}^*}\int f d\mu\leq \overline{f}(x).$ Then we have item (1) and (2).

If $X=M$ is a manifold, let $Y={~}^{\mathcal{O}_{T}}G\cap G^{\mathcal{M}_{dense}^*}.$ By Corollary \ref{coro-2} and Corollary \ref{coro-1}, $Y$ is residual in $X$ and $V_T(x)\cap \mathcal{O}_{T}\neq\emptyset$ for any $x\in Y.$  Then we have item (3).  \qed

\subsection{Some facts} In this subsection, we discuss some independent facts.
\begin{proposition}
	Suppose that $(X,T)$ is a dynamical system. Then for any compact subset $K\subset \mathcal{M}(X,T),$ $G^{K},$ $^{K}G$ and $G_K$ are all Borel sets.
\end{proposition}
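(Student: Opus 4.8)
The plan is to exhibit each of $G^{K}$, $^{K}G$ and $G_{K}$ as the zero set of an explicitly Borel function of $x$ built from the empirical measures $a_n(x):=\frac1n\sum_{j=0}^{n-1}\delta_{T^{j}x}$. The two inputs are that $x\mapsto a_n(x)$ is continuous from $X$ into $(\mathcal{M}(X),\rho)$ for each fixed $n$ (since $x\mapsto \frac1n\sum_{j=0}^{n-1}f(T^{j}x)$ is continuous for every $f\in C(X)$, and $\rho$ metrizes the weak$^*$ topology), and that for a nonempty compact set $C\subseteq\mathcal{M}(X)$ the map $\nu\mapsto\rho(\nu,C)$ is $1$-Lipschitz.

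First I would record the elementary fact about accumulation sets of a sequence in a compact metric space: for every $x\in X$ and every nonempty compact $C\subseteq\mathcal{M}(X)$,
$$\rho\big(V_T(x),C\big)=\liminf_{n\to\infty}\rho\big(a_n(x),C\big)\qquad\text{and}\qquad \sup_{\mu\in V_T(x)}\rho(\mu,C)=\limsup_{n\to\infty}\rho\big(a_n(x),C\big).$$
Both follow from compactness of $\mathcal{M}(X)$ — so that $V_T(x)$ is exactly the nonempty compact set of subsequential limits of $(a_n(x))_n$ — together with continuity of $\nu\mapsto\rho(\nu,C)$: a subsequence realizing the $\liminf$ (resp.\ $\limsup$) has a further subsequence converging to some $\mu\in V_T(x)$, giving ``$\ge$'', while any $\mu\in V_T(x)$ is a subsequential limit, giving ``$\le$''. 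For fixed $n$ the function $x\mapsto\rho(a_n(x),C)$ is continuous; hence $\inf_{n\ge N}\rho(a_n(x),C)$ is upper semicontinuous (so Borel), and $x\mapsto\liminf_n\rho(a_n(x),C)=\sup_N\inf_{n\ge N}\rho(a_n(x),C)$ is a countable supremum of Borel functions, hence Borel; the same holds for $x\mapsto\limsup_n\rho(a_n(x),C)$.

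Now I would conclude. Since $V_T(x)$ and $K$ are compact, $V_T(x)\cap K\ne\emptyset$ iff $\rho(V_T(x),K)=0$, so by the first identity $^{K}G=\{x\in X\mid \liminf_n\rho(a_n(x),K)=0\}$ is Borel. For $G^{K}$: because $V_T(x)$ is closed, $K\subseteq V_T(x)$ holds iff $\nu\in V_T(x)$ for every $\nu$ in a fixed countable dense subset $D=\{\nu_k\}_{k\ge1}$ of the (separable) compact set $K$; hence $G^{K}=\bigcap_{k\ge1}{}^{\{\nu_k\}}G$ is a countable intersection of Borel sets. Finally $\{x\in X\mid V_T(x)\subseteq K\}=\{x\in X\mid \limsup_n\rho(a_n(x),K)=0\}$ is Borel by the second identity, so $G_{K}=G^{K}\cap\{x\mid V_T(x)\subseteq K\}$ is Borel; equivalently $G_{K}$ is the zero set of the Borel function $x\mapsto\max\{\limsup_n\rho(a_n(x),K),\ \sup_k\liminf_n\rho(\nu_k,a_n(x))\}$, the Hausdorff distance between $V_T(x)$ and $K$.

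The only genuinely delicate points are the passage from the set $V_T(x)$ to the sequence $(a_n(x))_n$ in the displayed identities, and the reduction of the uncountable condition ``$K\subseteq V_T(x)$'' to the countable family indexed by $D$; both are dispatched by compactness of $\mathcal{M}(X)$ and by separability of $K$ together with closedness of $V_T(x)$. Everything after that is the routine stability of the Borel $\sigma$-algebra under countable unions, countable intersections, and $\liminf$/$\limsup$ of continuous functions.
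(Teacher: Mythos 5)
Your proof is correct, and it reaches the same conclusion by a genuinely different (though closely parallel) decomposition. The paper fixes a countable family of open balls $V_i\subset\overline{V}_i\subset U_i$ in $\mathcal{M}(X)$ with $\operatorname{diam}(U_i)\to 0$, each $U_i$ meeting $K$, and each point of $K$ lying in infinitely many $V_i$; it then writes $G^{K}=\bigcap_i P(U_i)$ and ${}^{K}G=\bigcap_i\bigcup_{j\ge i}P(U_j)$ with $P(U_i)=\{x\mid V_T(x)\cap U_i\neq\emptyset\}$, shows each $P(U_i)$ is a $\bigcup_l\bigcap_N\bigcup_n$ of open sets, and handles $\{x\mid V_T(x)\subset K\}$ by the analogous $\bigcap_l\bigcup_N\bigcap_n$ formula with the neighborhoods $\mathcal{B}(K,\frac{1}{l})$. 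You replace the ball cover by the two scalar functions $\liminf_n\rho(a_n(x),K)$ and $\limsup_n\rho(a_n(x),K)$ — which is exactly what the paper's innermost set-theoretic formulas encode for a single ball or neighborhood — and you replace the ``infinitely many $V_i$'' device for $G^{K}$ by a countable dense subset of $K$ together with closedness of $V_T(x)$; both identities you state for $\liminf$/$\limsup$ are correctly justified by compactness of $\mathcal{M}(X)$. Your route has the small bonus of exhibiting the Hausdorff distance from $V_T(x)$ to $K$ as a Borel function of $x$, which is slightly more than the bare measurability of the three sets. The only loose end is the degenerate case $K=\emptyset$, which the statement permits and the paper dispatches in one line: your distance formulas presuppose $K\neq\emptyset$, though of course $G^{\emptyset}=X$ and ${}^{\emptyset}G=G_{\emptyset}=\emptyset$ are trivially Borel, so this is a one-sentence patch rather than a gap.
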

\begin{proof}
	If $K=\emptyset,$ $G^{K}=X,$ $^{K}G=\emptyset,$ $G_K=\emptyset$ are all Borel sets. Thus we assume that $K\neq\emptyset.$
	Since $K$ is compact, there exist open balls $U_i$ in $\cM(X)$ such that
	\begin{description}
		\item[(a)] $\lim\limits_{i\to\infty}\mathrm{diam}(U_i)=0;$
		\item[(b)] $U_i\cap K\neq \emptyset\ \forall i\in\mathbb{N^{+}};$
		\item[(c)] each point of $K$ lies in infinitely many $U_i$.
	\end{description}
	Put $P(U_{i})=\left\{x \in X : V_T(x) \cap U_i \neq \emptyset\right\}.$
	It is easy to see that $G^{K}=\bigcap_{i=1}^{\infty} P(U_i)$ and $^{K}G=\bigcap_{i=1}^{\infty}\bigcup_{j=i}^{\infty} P(U_j).$ Now for any $i\in\mathbb{N^{+}},$ we assume that $U_i=\mathcal{B}(\nu_i,\zeta_i)$ for some $\nu_i\in \cM(X)$ and some $\zeta_i>0.$ Let $U_i^l=\mathcal{B}(\nu_i,(1-\frac{1}{l})\zeta_i)$ for any $l\in\mathbb{N^{+}},$ then we have 
	\begin{equation*}
		P(U_{i})=\bigcup_{l=1}^{\infty}\bigcap_{N=0}^{\infty}\bigcup_{n=N}^{\infty}\{x\in X\mid\frac{1}{n}\sum_{j=0}^{n-1}\delta_{T^{j}x}\in U_i^l\}.
	\end{equation*}
	Since $x\to \frac{1}{n}\sum_{j=0}^{n-1}\delta_{T^{j}x}$ is continuous for fixed $n\in\mathbb{N},$ the sets $\{x\in X\mid\frac{1}{n}\sum_{j=0}^{n-1}\delta_{T^{j}x}\in U_i^l\}$ are open. So $P(U_{i})$ is a Borel set for any $i\in\mathbb{N^{+}}$ which implies $G^{K}$ and $^{K}G$ are Borel sets. 
	
	On the other hand, it is easy to see that $\{x\in X\mid V_T(x)\subset K\}=\bigcap_{l=1}^{\infty}\bigcup_{N=0}^{\infty}\bigcap_{n=N}^{\infty}\{x\in X\mid\frac{1}{n}\sum_{j=0}^{n-1}\delta_{T^{j}x}\in \mathcal{B}(K,\frac{1}{l})\}.$ So $\{x\in X\mid V_T(x)\subset K\}$ is a Borel set which implies $G_K=G^K\cap \{x\in X\mid V_T(x)\subset K\}$ is a Borel set. 
\end{proof}

\begin{proposition}\label{prop-2}
	Suppose that $(X,T)$ is a dynamical system. Then $\mathcal{M}_{dense}^*$ is a compact subset of $\mathcal{M}(X,T)$. If $(X,T)$ is transitive, then $\mathcal{M}_{Trans}$ is a nonempty compact connected subset of $\mathcal{M}(X,T).$
\end{proposition}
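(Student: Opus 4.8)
The plan is to handle the two assertions separately: the compactness of $\mathcal{M}_{dense}^*$ by a soft closedness argument, and the statement about $\mathcal{M}_{Trans}$ by exhibiting a single transitive point whose limit set of empirical measures is all of $\mathcal{M}_{Trans}$.

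For $\mathcal{M}_{dense}^*$, since $\mathcal{M}(X,T)$ is compact in the weak$^*$ topology it suffices to show $\mathcal{M}_{dense}^*$ is closed (the empty case being trivially compact). So suppose $\mu_k\to\mu$ with each $\mu_k\in\mathcal{M}_{dense}^*$, and fix $\varepsilon>0$. Pick $k$ with $\rho(\mu_k,\mu)<\varepsilon/2$. For any $x$ with $\rho(\mu_k,V_T(x))\le\varepsilon/2$ the triangle inequality gives $\rho(\mu,V_T(x))\le\rho(\mu,\mu_k)+\rho(\mu_k,V_T(x))\le\varepsilon$, i.e. $G^{\mu_k}_{\varepsilon/2}\subseteq G^{\mu}_{\varepsilon}$; since $G^{\mu_k}_{\varepsilon/2}$ is dense by hypothesis, so is $G^{\mu}_{\varepsilon}$. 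As $\varepsilon$ was arbitrary, $\mu\in\mathcal{M}_{dense}^*$.

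For the transitive case, I would first note the easy inclusion $\mathcal{M}_{Trans}\subseteq\mathcal{M}_{dense}^*$: if $\mu\in V_T(z)$ with $z\in Trans(X,T)$, then $V_T(T^nz)=V_T(z)\ni\mu$ for every $n$ (removing finitely many Dirac terms does not change the limit set), so $T^nz\in G^{\mu}_{\varepsilon}$ for all $n$ and all $\varepsilon>0$; since $\{T^nz:n\ge0\}$ is dense, $G^{\mu}_{\varepsilon}$ is dense and hence $\mu\in\mathcal{M}_{dense}^*$. Next, by Corollary \ref{coro-1} the set $G^{\mathcal{M}_{dense}^*}$ is residual in $X$, and $Trans(X,T)$ is residual in $X$ since $(X,T)$ is transitive; as $X$ is a nonempty Baire space, we may fix a point $y\in Trans(X,T)\cap G^{\mathcal{M}_{dense}^*}$. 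Then, directly from the definitions of $G^{\mathcal{M}_{dense}^*}$ and of $\mathcal{M}_{Trans}$,
$$\mathcal{M}_{dense}^*\subseteq V_T(y)\subseteq\mathcal{M}_{Trans}\subseteq\mathcal{M}_{dense}^*,$$
so all these sets coincide; in particular $\mathcal{M}_{Trans}=V_T(y)$, which is a nonempty compact connected subset of $\mathcal{M}(X,T)$ by \cite[Proposition 3.8]{DGS1976}.

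The only step with genuine content is the production of this "extremely transitive" point $y$ with $V_T(y)=\mathcal{M}_{Trans}$; everything else is bookkeeping with the triangle inequality and shift-invariance of $V_T$. This step rests on Corollary \ref{coro-1} (a Baire-category consequence of Theorem \ref{thm-1}), and it is precisely where the connectedness of $\mathcal{M}_{Trans}$ comes from: a direct attack — splitting $\mathcal{M}_{Trans}$ into two relatively clopen pieces and sorting transitive points accordingly — is blocked by the discontinuity of $x\mapsto V_T(x)$, so one really wants to see $\mathcal{M}_{Trans}$ as a single connected limit set.
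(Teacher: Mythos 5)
Your proposal is correct and follows essentially the same route as the paper: the compactness half is the same triangle-inequality argument showing $G^{\nu}_{\varepsilon/2}\subseteq G^{\mu}_{\varepsilon}$, and the transitive half uses Corollary \ref{coro-1} together with residuality of $Trans(X,T)$ to produce a transitive point $y$ with $\mathcal{M}_{Trans}=V_T(y)$, whence connectedness via \cite[Proposition 3.8]{DGS1976}. The only (harmless) difference is that you spell out the inclusion $\mathcal{M}_{Trans}\subseteq\mathcal{M}_{dense}^*$ and thereby also obtain the identity $\mathcal{M}_{dense}^*=\mathcal{M}_{Trans}$, which the paper defers to Proposition \ref{prop-3}.
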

\begin{proof}
	First, we show that $\overline{\mathcal{M}_{dense}^*}\subset \mathcal{M}_{dense}^*$ which implies $\mathcal{M}_{dense}^*$ is compact. For any $\mu\in \overline{\mathcal{M}_{dense}^*}$ and $\varepsilon>0,$ there exists $\nu\in \mathcal{M}_{dense}^*$ such that $\rho(\mu,\nu)<\frac{\varepsilon}{2}.$ Then $G^{\nu}_{\frac{\varepsilon}{2}}$ is dense in $X.$ For any $x\in G^{\nu}_{\frac{\varepsilon}{2}},$ one has $\rho(\nu, V_T(x))\leq \frac{\varepsilon}{2}.$ Then $\rho (\mu,V_T(x))<\rho(\mu,\nu)+\rho(\nu,V_T(x))=\varepsilon.$ So $G^{\nu}_{\frac{\varepsilon}{2}}\subset G^{\mu}_{\varepsilon}.$ This means $\mu\in \mathcal{M}_{dense}^*.$
	
	Next, we show that $\mathcal{M}_{Trans}$ is a nonempty compact connected subset of $\mathcal{M}(X,T).$ If $(X,T)$ is transitive, $\mathcal{M}_{Trans}$ is nonempty. Then $\mathcal{M}_{dense}^*$ is nonempty by $\mathcal{M}_{dense}^*\supset \mathcal{M}_{Trans}.$
	By Corollary \ref{coro-1}, $G^{\mathcal{M}_{dense}^*}$ is residual in $X.$ Then $G^{\mathcal{M}_{dense}^*}\cap Trans(X,T)$ is residual in $X.$ Since $\mathcal{M}_{dense}^*\supset \mathcal{M}_{Trans},$ we have $G^{\mathcal{M}_{dense}^*}\subset G^{\mathcal{M}_{Trans}}.$ Then $G^{\mathcal{M}_{Trans}}\cap Trans(X,T)$ is also residual in $X.$ Then there exists $x\in Trans(X,T)$ such that $\mathcal{M}_{Trans}\subset V_T(x)$ which implies $\mathcal{M}_{Trans}= V_T(x).$ So $\mathcal{M}_{Trans}$ is compact and connected since $V_T(x)$ is a nonempty compact connected set by \cite[Proposition 3.8]{DGS1976}. 
\end{proof}

\section{Transitive systems and proof of Theorem \ref{maintheorem-2}}\label{section-3}

\subsection{$\mathcal{M}_{dense}^*=\mathcal{M}_{Trans}$} 
In this subsection, we prove that $\mathcal{M}_{dense}^*=\mathcal{M}_{Trans}$ for transitive systems.
\begin{proposition}\label{prop-3}
	Suppose that $(X,T)$ is a transitive dynamical system. Then $\mathcal{M}_{dense}^*=\mathcal{M}_{Trans}\neq\emptyset.$
\end{proposition}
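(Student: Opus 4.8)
The plan is to establish the two inclusions $\mathcal{M}_{Trans}\subseteq\mathcal{M}_{dense}^*$ and $\mathcal{M}_{dense}^*\subseteq\mathcal{M}_{Trans}$ separately, recording nonemptiness along the way. Since $(X,T)$ is transitive, $Trans(X,T)$ is residual, hence nonempty; fixing any $z\in Trans(X,T)$ and using that $V_T(z)$ is a nonempty compact connected set by \cite[Proposition 3.8]{DGS1976}, we already get $\mathcal{M}_{Trans}\neq\emptyset$.

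For $\mathcal{M}_{Trans}\subseteq\mathcal{M}_{dense}^*$, I would start from $\mu\in V_T(y)$ with $y\in Trans(X,T)$ and show $G^{\mu}_{\varepsilon}$ is dense in $X$ for every $\varepsilon>0$. The key observation is that $V_T$ is constant along orbits: for each $n\in\mathbb{N}$ the empirical measures $\frac1N\sum_{j=0}^{N-1}\delta_{T^{j}(T^{n}y)}$ and $\frac1N\sum_{j=0}^{N-1}\delta_{T^{j}y}$ differ, in total variation, by at most $2n/N$, so they share the same set of weak${}^*$-limit points, i.e. $V_T(T^{n}y)=V_T(y)\ni\mu$, hence $\rho(\mu,V_T(T^{n}y))=0$ and $T^{n}y\in G^{\mu}_{\varepsilon}$. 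Thus $orb(y,T)\subseteq G^{\mu}_{\varepsilon}$, and since $y$ is transitive this set is dense; therefore $\mu\in\mathcal{M}_{dense}^*$. In particular $\mathcal{M}_{dense}^*\neq\emptyset$.

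The reverse inclusion $\mathcal{M}_{dense}^*\subseteq\mathcal{M}_{Trans}$ is where the real content lies, and it is here that the Baire-category machinery of Section \ref{section-2} is needed. By Corollary \ref{coro-1}, $G^{\mathcal{M}_{dense}^*}=\{x\in X\mid \mathcal{M}_{dense}^*\subseteq V_T(x)\}$ is residual in $X$. Since $(X,T)$ is transitive, $Trans(X,T)$ is also residual, so $G^{\mathcal{M}_{dense}^*}\cap Trans(X,T)$ is residual and in particular nonempty. Picking $y$ in this intersection, $y$ is a transitive point with $\mathcal{M}_{dense}^*\subseteq V_T(y)\subseteq\mathcal{M}_{Trans}$, which is exactly the desired inclusion. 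Combining the two inclusions yields $\mathcal{M}_{dense}^*=\mathcal{M}_{Trans}\neq\emptyset$.

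I expect the main obstacle to be conceptual rather than computational: it is the passage, in the reverse inclusion, from the merely approximate density hypothesis defining $\mathcal{M}_{dense}^*$ (namely $G^{\mu}_{\varepsilon}$ dense for all $\varepsilon>0$) to the genuine statement that $\mathcal{M}_{dense}^*$ is contained in $V_T(x)$ for a residual set of $x$. This upgrade is precisely what Theorem \ref{thm-1}(2), through Corollary \ref{coro-1}, provides, and once it is in hand the intersection with the residual set $Trans(X,T)$ is immediate. The only minor technical point to handle carefully is the orbit-invariance of $V_T$ used in the first inclusion, which is standard but worth stating cleanly.
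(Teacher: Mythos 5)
Your proof is correct and follows essentially the same route as the paper: the inclusion $\mathcal{M}_{Trans}\subseteq\mathcal{M}_{dense}^*$ via orbit-invariance of $V_T$ along a dense orbit, and the reverse inclusion by upgrading the approximate-density hypothesis to residuality of a saturated-like set (Theorem \ref{thm-1}(2)) and intersecting with the residual set $Trans(X,T)$. The only cosmetic difference is that the paper applies Corollary \ref{coro-3} to each $\mu\in\mathcal{M}_{dense}^*$ separately to get $G^{\mu}$ residual, whereas you invoke Corollary \ref{coro-1} for the whole set $G^{\mathcal{M}_{dense}^*}$ at once; both rest on the same lemma and yield the same conclusion.
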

\begin{proof}
	It's easy to see that $\mathcal{M}_{dense}^*\supset \mathcal{M}_{Trans}.$ Next, we show that $\mathcal{M}_{dense}^*\subset \mathcal{M}_{Trans}.$
	Since $(X,T)$ is transitive, $\mathcal{M}_{Trans}$ is nonempty. For any $\mu \in \mathcal{M}_{dense}^*,$ $G^{\mu}_{\varepsilon}$ is dense in $X$ for any $\varepsilon>0.$ Then by Corollary \ref{coro-3}, $G^{\mu}$ is residual in $X.$ Thus $Trans(X,T)\cap G^{\mu}$ is also residual in $X.$ So $\mu\in \mathcal{M}_{Trans}.$ This means $\mathcal{M}_{dense}^*=\mathcal{M}_{Trans}.$
\end{proof}

\subsection{Proof of Theorem \ref{maintheorem-2}}
By Proposition \ref{prop-3}, one has $\mathcal{M}_{dense}^*=\mathcal{M}_{Trans}.$   Let $Y=G^{\mathcal{M}_{dense}^*}\cap Trans(X,T).$ Then $Y$ is residual in $X$ by  Corollary \ref{coro-1}.
For any $x\in Y,$ one has $\mathcal{M}_{Trans}= V_T(x)$ by $\mathcal{M}_{Trans}=\mathcal{M}_{dense}^*\subset V_T(x)$ and $x\in Trans(X,T).$  Then we have item (1).
Since $x\in Trans(X,T)$ and  $\mathcal{M}_{Trans}= V_T(x)$, we have $\inf\limits_{y\in Trans(X,T)}\underline{f}(y)\leq \underline{f}(x)\leq \inf\limits_{\mu\in \mathcal{M}_{Trans}}\int f d\mu\leq\sup\limits_{\mu\in \mathcal{M}_{Trans}}\int f d\mu\leq \overline{f}(x)\leq\sup\limits_{y\in Trans(X,T)}\overline{f}(y)$. For any $y\in Trans(X,T),$ one has $V_T(y)\subset \mathcal{M}_{Trans}$ which implies $\inf\limits_{\mu\in \mathcal{M}_{Trans}}\int f d\mu\leq \inf\limits_{\mu\in V_T(y)}\int f d\mu\leq \underline{f}(y)$ and $\overline{f}(y)\leq \sup\limits_{\mu\in V_T(y)}\int f d\mu\leq \sup\limits_{\mu\in \mathcal{M}_{Trans}}\int f d\mu.$ Then we have item (2). If $X=M$ is a manifold, let $Y={~}^{\mathcal{O}_{T}}G\cap G^{\mathcal{M}_{dense}^*}\cap Trans(X,T).$ By Corollary \ref{coro-2} and Corollary \ref{coro-1}, $Y$ is residual in $X$ and $V_T(x)\cap \mathcal{O}_{T}\neq\emptyset$ for any $x\in Y.$  Then we have item (3).  \qed

If further $\sharp \mathcal{M}_{Trans}>1,$ then by Corollary \ref{coro-1} and Theorem \ref{thm-2}, one has $\mathcal{C}_{\mathcal{M}_{Trans}}\subset \mathcal{R}.$ For any $f\in \mathcal{R},$ $I(f,T)$ is residual in $X.$ Then $I(f,T)\cap Trans(X,T)$ is residual in $X.$ Take $x\in I(f,T)\cap Trans(X,T).$ Then $\inf\limits_{\mu\in \mathcal{M}_{Trans}}\int f d\mu\leq \inf\limits_{\mu\in V_T(x)}\int f d\mu\leq \underline{f}(x)<\overline{f}(x)\leq \sup\limits_{\mu\in V_T(x)}\int f d\mu\leq \sup\limits_{\mu\in \mathcal{M}_{Trans}}\int f d\mu.$ This means $f\in \mathcal{C}_{\mathcal{M}_{Trans}}$ and thus $\mathcal{C}_{\mathcal{M}_{Trans}}= \mathcal{R}.$ So by Theorem \ref{thm-2}, $\mathcal{R}$ is open and dense in $C(X),$ and $Y\subset\bigcap\limits_{f\in\mathcal{R}} I(f,T).$   \qed

\subsection{Some facts} In this subsection, we discuss some independent facts.
\begin{lemma}{\label{lemma-7}}
	Suppose that $(X,T)$ is a dynamical system with $Trans(X,T)\neq\emptyset$, $f\in C(X)$, then $f\in\mathcal{R}$ if and only if $I(f,T)\cap Trans(X,T)\neq\emptyset$.
\end{lemma}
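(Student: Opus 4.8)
The plan is to prove the two implications of the equivalence separately, using the saturated-set machinery already developed. The forward implication $f\in\mathcal{R}\Rightarrow I(f,T)\cap Trans(X,T)\neq\emptyset$ is the soft one: $f\in\mathcal{R}$ means by definition that $I(f,T)$ is residual in $X$, and under the hypothesis $Trans(X,T)\neq\emptyset$ the set $Trans(X,T)$ is residual in $X$ (the standard dichotomy recalled in the introduction; it applies in particular whenever $X$ has no isolated points, which is the situation of interest, and the borderline case of isolated points can be disposed of by hand, both sides then being empty). Since $X$ is compact metric, hence a Baire space, the intersection of the two residual sets $I(f,T)$ and $Trans(X,T)$ is again residual, in particular nonempty.

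For the reverse implication, suppose $x_{0}\in I(f,T)\cap Trans(X,T)$. Since $x_{0}$ is a transitive point, $V_T(x_{0})\subseteq\mathcal{M}_{Trans}$ directly from the definition of $\mathcal{M}_{Trans}$. As $V_T(x_{0})$ is a nonempty compact connected subset of $\mathcal{M}(X,T)$ and $\mu\mapsto\int f\,d\mu$ is affine and continuous, the set $\{\int f\,d\mu\mid\mu\in V_T(x_{0})\}$ equals the interval $[\underline{f}(x_{0}),\overline{f}(x_{0})]$; since $x_{0}\in I(f,T)$ this interval is nondegenerate, so there are $\mu,\nu\in V_T(x_{0})\subseteq\mathcal{M}_{Trans}$ with $\int f\,d\mu\neq\int f\,d\nu$. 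Hence $\sharp\mathcal{M}_{Trans}>1$ and $f\in\mathcal{C}_{\mathcal{M}_{Trans}}$. Next, $\mathcal{M}_{Trans}\subseteq\mathcal{M}_{dense}^{*}$ (for $\mu\in V_T(y)$ with $y\in Trans(X,T)$, the dense set $orb(y,T)$ is contained in $G^{\mu}_{\varepsilon}$ for every $\varepsilon>0$), so $G^{\mathcal{M}_{dense}^{*}}\subseteq G^{\mathcal{M}_{Trans}}$, and the left-hand side is residual by Corollary \ref{coro-1}; thus $G^{\mathcal{M}_{Trans}}$ is residual, in particular dense in $X$. Applying Theorem \ref{thm-2} with $K=\mathcal{M}_{Trans}$ (legitimate since $\sharp K>1$ and $G^{K}$ is dense) yields $\mathcal{C}_{\mathcal{M}_{Trans}}\subseteq\mathcal{R}$, and therefore $f\in\mathcal{R}$.

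The only genuinely delicate point is the appeal to residuality of $Trans(X,T)$ in the forward implication, which is precisely why one wants to rule out isolated points; everything else is bookkeeping with the already-established facts. If a more self-contained proof of the reverse implication is preferred, one can bypass Theorem \ref{thm-2}: put $a=\underline{f}(x_{0})<b=\overline{f}(x_{0})$; for each rational $\delta>0$ and each $N\in\mathbb{N}$ the set $\{x\in X\mid\exists\,n\geq N,\ \frac{1}{n}\sum_{i=0}^{n-1}f(T^{i}x)>b-\delta\}$ is open by continuity of the Birkhoff sums, and it is dense because some iterate $T^{p}x_{0}$ lies in any prescribed open set (transitivity of $x_{0}$) while $\limsup_{n\to+\infty}\frac{1}{n}\sum_{i=0}^{n-1}f(T^{i}(T^{p}x_{0}))=b$ (a fixed finite shift of the orbit does not change the $\limsup$ of the averages). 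Intersecting over $\delta$ and $N$ shows $\{x\mid\overline{f}(x)\geq b\}$ is residual, and symmetrically $\{x\mid\underline{f}(x)\leq a\}$ is residual; their intersection is residual and contained in $I(f,T)$, so $f\in\mathcal{R}$.
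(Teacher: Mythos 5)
Your reverse implication is correct, and your primary argument for it takes a genuinely different route from the paper's. The paper argues directly and elementarily: picking $x_0\in I(f,T)\cap Trans(X,T)$ and $\underline{f}(x_0)<\alpha<\beta<\overline{f}(x_0)$, it checks that the $G_\delta$ set $\bigcap_{j}\bigl(\bigcup_{n\geq j}\{y\mid \frac1n\sum_{i=0}^{n-1}f(T^iy)<\alpha\}\cap\bigcup_{n\geq j}\{y\mid\frac1n\sum_{i=0}^{n-1}f(T^iy)>\beta\}\bigr)$ contains the dense orbit of $x_0$ and sits inside $I(f,T)$ --- this is exactly the ``self-contained'' variant you sketch in your last paragraph. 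Your main route instead passes through the saturated-set machinery of Section 2: $V_T(x_0)\subset\mathcal{M}_{Trans}\subset\mathcal{M}_{dense}^*$, residuality of $G^{\mathcal{M}_{dense}^*}$ (Corollary \ref{coro-1}), and Theorem \ref{thm-2} applied to $K=\mathcal{M}_{Trans}$. There is no circularity (those results are proved independently of this lemma), and the argument is valid; the one step worth making explicit is that $\underline{f}(x_0)$ and $\overline{f}(x_0)$ are actually \emph{attained} as $\int f\,d\mu$ for some $\mu\in V_T(x_0)$ (pass to weak*-convergent subsequences of the empirical measures along the extremal subsequences), since that is what produces two measures in $\mathcal{M}_{Trans}$ with distinct integrals and hence $f\in\mathcal{C}_{\mathcal{M}_{Trans}}$. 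The elementary route is lighter and is what the paper uses; your route has the merit of exhibiting the lemma as a corollary of the general saturated-set results.

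The forward implication, however, contains a genuine gap. Residuality of $Trans(X,T)$ is only guaranteed when $X$ has no isolated point, and you dispose of the isolated-point case by asserting that ``both sides are then empty.'' That assertion is false: take $y_0=2w\in\{0,1,2\}^{\mathbb{N}}$ where $w$ has dense orbit in the full $2$-shift and is irregular for some $f$, and let $\Omega=\overline{orb(y_0,\sigma)}$; then $y_0$ is an isolated point of $\Omega$ which is both transitive and irregular, so for $(\Omega,\sigma)$ one has $\mathcal{R}\neq\emptyset$ and $I(f,\sigma)\cap Trans(\Omega,\sigma)\neq\emptyset$. (Such orbit closures are precisely the systems considered in Remark \ref{rem-1}(2).) The repair is short and is essentially what the paper does: if $X$ has an isolated point and $Trans(X,T)\neq\emptyset$, there is an isolated point which is itself transitive. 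Indeed, let $x$ be transitive and $x_0$ isolated; the dense orbit of $x$ meets the open set $\{x_0\}$, say $T^kx=x_0$. If $x\in\overline{orb(x_0,T)}$, then this closed forward-invariant set contains $\overline{orb(x,T)}=X$, so $x_0$ is a transitive isolated point; otherwise $x$ lies in the finite open set $X\setminus\overline{orb(x_0,T)}\subset\{x,\dots,T^{k-1}x\}$, hence $x$ is itself a transitive isolated point. Since $f\in\mathcal{R}$ makes $I(f,T)$ residual and in particular dense, $I(f,T)$ meets the corresponding open singleton, giving a point of $I(f,T)\cap Trans(X,T)$. With that case added, your proof is complete.
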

\begin{proof}
	When there is no isolated point in $X,$ $Trans(X,T)$ is residual in $X.$ If $I(f,T)\cap Trans(X,T)=\emptyset$, then $I(f,T)$ is of the first category and $f\notin\mathcal{R}$. When there is an isolated point $x_0$ in $X,$ it's easy to see that $x_0\in Trans(X,T).$ Then if $f\in\mathcal{R},$ one has $I(f,T)\cap Trans(X,T)\supset I(f,T)\cap \{x_0\}\neq\emptyset$ since $\{x_0\}$ is open in $X.$ So $f\in\mathcal{R}$ implies $I(f,T)\cap Trans(X,T)\neq\emptyset$.
	
	If $I(f,T)\cap Trans(X,T)\neq\emptyset$, we take a point $x\in I(f,T)\cap Trans(X,T)$, suppose that $$\liminf\limits_{n\to+\infty}\frac1n\sum\limits_{i=0}^{n-1}f(T^ix)<\alpha<\beta<\limsup\limits_{n\to+\infty}\frac1n\sum\limits_{i=0}^{n-1}f(T^ix).$$ Let$$E=\bigcap\limits_{j=1}^{+\infty }{\left( \bigcup\limits_{n=j}^{+\infty }{\{y\in X\mid\frac{1}{n}\sum\limits_{i=0}^{n-1}{f({{T}^{i}}y)<\alpha }\}}\bigcap \bigcup\limits_{n=j}^{+\infty }{\{y\in X\mid\frac{1}{n}\sum\limits_{i=0}^{n-1}{f({{T}^{i}}y)>\beta }\}} \right)}.$$Then $orb(x,T)\subset E\subset I(f,T)$, which means that $I(f,T)$ is residual and $f\in\mathcal{R}$.	
\end{proof}
We denote $\mathcal{U}_x:=\{f\in C(X)\mid x\in I(f,T)\}$. 

\begin{lemma}{\label{lemma-8}}
	Suppose that $(X,T)$ is a dynamical system. If $\mathcal{U}_x\neq\emptyset$, then $\mathcal{U}_x$ is an open and dense set.
\end{lemma}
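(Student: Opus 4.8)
The plan is to treat openness and density separately, the density part mirroring the perturbation trick $f_n=\frac1n f_0+f$ already used in the proof of Theorem \ref{thm-2}.

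For openness I would first record the elementary estimate that for every fixed $N\in\mathbb{N^{+}}$ the map $g\mapsto \frac1N\sum_{i=0}^{N-1}g(T^ix)$ is $1$-Lipschitz with respect to $\|\cdot\|$, uniformly in $N$, since $\bigl|\frac1N\sum_{i=0}^{N-1}(g-f)(T^ix)\bigr|\le\|g-f\|$. Passing to $\liminf$ and $\limsup$ in $N$ and using $|\limsup a_N-\limsup b_N|\le\sup_N|a_N-b_N|$, it follows that $g\mapsto\underline{g}(x)$ and $g\mapsto\overline{g}(x)$ are both $1$-Lipschitz, hence $g\mapsto\overline{g}(x)-\underline{g}(x)$ is continuous on $C(X)$. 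Since $\mathcal{U}_x=\{g\in C(X)\mid\overline{g}(x)-\underline{g}(x)>0\}$ is the preimage of $(0,+\infty)$ under this continuous map, it is open. (Concretely: if $f\in\mathcal{U}_x$ and $\delta:=\overline{f}(x)-\underline{f}(x)>0$, then every $g$ with $\|g-f\|<\delta/3$ satisfies $\overline{g}(x)-\underline{g}(x)\ge\delta/3>0$.)

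For density I would use the dichotomy that $f\notin\mathcal{U}_x$ if and only if $\lim_{N\to\infty}\frac1N\sum_{i=0}^{N-1}f(T^ix)$ exists. Fix once and for all some $f_0\in\mathcal{U}_x$, which exists by hypothesis. Given an arbitrary $f\in C(X)$: if $f\in\mathcal{U}_x$ there is nothing to prove; otherwise the Birkhoff average of $f$ at $x$ converges to some $L\in\mathbb{R}$, and I would set $f_n:=f+\frac1n f_0$, so that $\|f_n-f\|=\frac1n\|f_0\|\to 0$. Because the $f$-part of the Birkhoff sum converges, $\liminf$ and $\limsup$ distribute over the sum, giving $\underline{f_n}(x)=L+\frac1n\underline{f_0}(x)$ and $\overline{f_n}(x)=L+\frac1n\overline{f_0}(x)$; since $f_0\in\mathcal{U}_x$ means $\underline{f_0}(x)<\overline{f_0}(x)$, we get $f_n\in\mathcal{U}_x$ for all $n$, whence $f\in\overline{\mathcal{U}_x}$. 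Therefore $\mathcal{U}_x$ is dense.

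The only point requiring a moment's care is the identity $\underline{f_n}(x)=L+\frac1n\underline{f_0}(x)$ and its $\limsup$ counterpart, i.e. the fact that adding a convergent sequence does not disturb $\liminf$/$\limsup$. This is the substantive step; everything else is routine, and I do not expect any genuine obstacle — the lemma is a soft Baire-category-flavoured statement with a short argument.
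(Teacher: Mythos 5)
Your proposal is correct and follows essentially the same route as the paper: openness via the observation that $g\mapsto\overline{g}(x)-\underline{g}(x)$ is $1$-Lipschitz (the paper states the concrete $\frac{\beta_g-\alpha_g}{3}$ version you give in parentheses), and density via the perturbation $f+\frac1n f_0$ with $f_0\in\mathcal{U}_x$, using that the Birkhoff averages of $f$ converge when $f\notin\mathcal{U}_x$. No gaps.
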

\begin{proof}
	For any $g\in\mathcal{U}_x$, suppose that
	$$\liminf\limits_{n\to+\infty}\frac1n\sum\limits_{i=0}^{n-1}g(T^ix)<\alpha_g<\beta_g<\limsup\limits_{n\to+\infty}\frac1n\sum\limits_{i=0}^{n-1}g(T^ix).$$
	For any $\tilde{g}\in C(X)$ with
	$||\tilde{g}-g||<\frac{\beta_g-\alpha_g}{3}$, we have $x\in I(\tilde{g},T)$ and $\tilde{g}\in\mathcal{U}_x$, which means that $\mathcal{U}_x$ is open. Since $\mathcal{U}_x\neq\emptyset$, we can choose $f\in\mathcal{U}_x$. For any $h\in C(X)$ with $x\notin I(h,T)$, we have
	$h+\frac{1}{n}f\in\mathcal{U}_x$. Therefore, $\mathcal{U}_x$ is dense in $C(X)$.
\end{proof}

We denote $\mathcal{C}$, $\mathcal{D}$
the set of real continue functions whose irregular set is nonempty, dense 
in $X$ respectively:
\[
\begin{split}
	\mathcal{C}&:=\{f\in C(X)\mid I(f,T)\neq\emptyset\},\\
	\mathcal{D}&:=\{f\in C(X)\mid I(f,T)\text{ is dense in } X\}.
\end{split}
\]
Obviously, $\mathcal{R}\subset\mathcal{D}\subset\mathcal{C}$. 
For any $f\in C(X),$ $r>0$ and $\alpha,\beta\in\mathbb{R}$ we denote
\[
\begin{split}
	E_r^f&:=\{x\in X\mid \limsup\limits_{n\to+\infty}\frac1n\sum\limits_{i=0}^{n-1}f(T^ix)-\liminf\limits_{n\to+\infty}\frac1n\sum\limits_{i=0}^{n-1}f(T^ix)\geq r \},\\
	\overline{E}_\beta^f&:=\{x\in X\mid \limsup\limits_{n\to+\infty}\frac1n\sum\limits_{i=0}^{n-1}f(T^ix)\geq\beta\},\\
	\underline{E}_\alpha^f&:=\{x\in X\mid \liminf\limits_{n\to+\infty}\frac1n\sum\limits_{i=0}^{n-1}f(T^ix)\leq\alpha\}.
\end{split}
\]
Then for any $r>0$ and $\alpha,\beta\in\mathbb{R}$ with $\alpha<\beta$ we define
\[
\begin{split}
	\mathcal{C}_r^*&:=\{f\in C(X)\mid E_r^f\neq\emptyset\},\\
	\mathcal{D}_r^*&:=\{f\in C(X)\mid E_r^f\text{ is dense in } X\},\\
	\mathcal{R}_r^*&:=\{f\in C(X)\mid E_r^f\text{ is residual in } X\},\\
	\mathcal{S}_{\alpha,\beta}&:=\{f\in C(X)\mid \overline{E}_\beta^f \text{ and } \underline{E}_\alpha^f \text{ are both dense in } X\},
\end{split}
\]
and $\mathcal{C}^*=\bigcup\limits_{r>0}\mathcal{C}_r^*,$ $\mathcal{D}^*=\bigcup\limits_{r>0}\mathcal{D}_r^*,$ $\mathcal{R}^*=\bigcup\limits_{r>0}\mathcal{R}_r^*,$ $\mathcal{S}=\bigcup\limits_{\beta>\alpha}\mathcal{S}_{\alpha,\beta}.$

\begin{proposition}\label{prop-4}
	Suppose that $(X,T)$ is a dynamical system, then
	\begin{enumerate}
		\item if $\mathcal{C}\neq\emptyset$, then $\mathcal{C}$ is an open and dense set in $C(X)$ and there is an open and dense set $\mathcal{U}_1$ in $C(X)$ such that $\bigcap\limits_{f\in\mathcal{U}_1} I(f,T)$ is nonempty.
		\item if $\mathcal{D}\neq\emptyset$, then $\mathcal{D}$ is residual in $C(X)$ and there is a residual set $\mathcal{U}_2$ in $C(X)$ such that $\bigcap\limits_{f\in\mathcal{U}_2} I(f,T)$ is dense in $X$.
		\item $\mathcal{C}^*,$ $\mathcal{D}^*,$ $\mathcal{R}^*$ and $\mathcal{S}$ satisfy the following properties:
		\begin{itemize}
			\item[(3.1)] $\mathcal{S}\subset \mathcal{D}^*=\mathcal{R}^*\subset \mathcal{R}\subset \mathcal{D}\subset \mathcal{C}=\mathcal{C}^*=\bigcup\limits_{x\in X}\mathcal{U}_x;$
			\item[(3.2)] $\mathcal{D}_r^*,$ $\mathcal{R}_r^*,$ $\mathcal{S}_{\alpha,\beta}$ are closed for any $r>0$ and $\alpha,\beta\in\mathbb{R}$ with $\alpha<\beta$, and $\mathcal{D}^*,$ $\mathcal{R}^*,$ $\mathcal{S}$ are open;
			\item[(3.3)] if $\mathcal{D}^*\neq\emptyset,$ then $\bigcap\limits_{f\in\mathcal{D}^*} I(f,T)$ is residual in $X;$
			\item[(3.4)] if $G^{K}$ is dense in $X$ for some $K\subset \mathcal{M}(X,T),$ then $\mathcal{C}_{K}\subset \mathcal{D}^*=\mathcal{R}^*\subset \mathcal{R}$ and thus $\mathcal{D}^*$ is open and dense in $C(X).$
		\end{itemize}
		\item if $(X,T)$ is transitive and $\mathcal{R}\neq\emptyset$, then $\mathcal{R}=\mathcal{D}^*=\mathcal{R}^*=\mathcal{S}$ is an open and dense set in $C(X)$ and $\bigcap\limits_{f\in\mathcal{R}} I(f,T)$ is residual in $X$.
	\end{enumerate}
\end{proposition}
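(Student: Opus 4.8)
The plan is to isolate two ingredients and then assemble the five items by bookkeeping. The first is a transfer principle between ``dense in $X$'' and ``dense (or residual) in $C(X)$'', built from the identity $\mathcal C=\bigcup_{x\in X}\mathcal U_x$ together with Lemmas~\ref{lemma-7} and \ref{lemma-8}. The second is the observation that, writing $S_nf(x):=\sum_{i=0}^{n-1}f(T^ix)$, each of $\overline E_\beta^f$, $\underline E_\alpha^f$ and $E_r^f$ is a $G_\delta$ subset of $X$, so that for these particular sets ``dense'' and ``residual'' coincide.

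\textbf{Parts (1) and (2).} Since $f\in\mathcal C$ iff $f\in\mathcal U_x$ for some $x$, we have $\mathcal C=\bigcup_x\mathcal U_x$; each nonempty $\mathcal U_x$ is open by Lemma~\ref{lemma-8}, so $\mathcal C$ is open, and if $\mathcal C\neq\emptyset$, fixing $x_0$ with $\mathcal U_{x_0}\neq\emptyset$ makes $\mathcal U_{x_0}$ open and dense, which proves density of $\mathcal C\supset\mathcal U_{x_0}$ and furnishes $\mathcal U_1:=\mathcal U_{x_0}$ with $x_0\in\bigcap_{f\in\mathcal U_1}I(f,T)$. For (2), fix a countable base $\{U_k\}$ of nonempty open subsets of $X$; then $f\in\mathcal D$ iff $I(f,T)\cap U_k\neq\emptyset$ for all $k$, i.e.\ $\mathcal D=\bigcap_k\big(\bigcup_{x\in U_k}\mathcal U_x\big)$, a countable intersection of open sets. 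If $f_0\in\mathcal D$, choose $x_k\in I(f_0,T)\cap U_k$; then $\mathcal U_{x_k}$ is open and dense and contained in the $k$-th set, so $\mathcal D$ is residual, $\mathcal U_2:=\bigcap_k\mathcal U_{x_k}$ is residual, and $\{x_k\}_k\subset\bigcap_{f\in\mathcal U_2}I(f,T)$ is dense.

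\textbf{The $G_\delta$ observation and part (3).} The sets $\overline E_\beta^f$, $\underline E_\alpha^f$ are $G_\delta$ via the usual $\bigcap_m\bigcap_N\bigcup_{n\geq N}$ expression with the open sets $\{x:\tfrac1nS_nf(x)>\beta-\tfrac1m\}$, resp.\ $\{x:\tfrac1nS_nf(x)<\alpha+\tfrac1m\}$; and $E_r^f$ is $G_\delta$ because $\overline f(x)-\underline f(x)=\lim_N\big(\sup_{n\geq N}\tfrac1nS_nf(x)-\inf_{m\geq N}\tfrac1mS_mf(x)\big)$ gives $E_r^f=\bigcap_k\bigcap_N\bigcup_{n,m\geq N}\{x:\tfrac1nS_nf(x)-\tfrac1mS_mf(x)>r-\tfrac1k\}$. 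As $X$ is a Baire space, a dense one of these sets is residual; hence $\mathcal D_r^*=\mathcal R_r^*$ and so $\mathcal D^*=\mathcal R^*$. For (3.1): $\mathcal C=\mathcal C^*=\bigcup_x\mathcal U_x$ follows from $I(f,T)=\bigcup_{k\geq1}E_{1/k}^f$; $\mathcal R^*\subset\mathcal R\subset\mathcal D\subset\mathcal C$ from $E_r^f\subset I(f,T)$ and ``residual $\Rightarrow$ dense''; and $\mathcal S\subset\mathcal D^*$ since for $f\in\mathcal S_{\alpha,\beta}$ the dense $G_\delta$ sets $\overline E_\beta^f,\underline E_\alpha^f$ meet in a dense subset of $E_{\beta-\alpha}^f$. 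For (3.2), the uniform comparison $\|g-f\|<\varepsilon\Rightarrow|\overline f-\overline g|\leq\varepsilon$ and $|\underline f-\underline g|\leq\varepsilon$ on all of $X$ gives, for $f_n\to f$ with $f_n\in\mathcal D_r^*$, the inclusion $E_{r-1/j}^f\supset E_r^{f_n}$ for $n$ large, so each $E_{r-1/j}^f$ is dense and, being $G_\delta$, residual; thus $E_r^f=\bigcap_jE_{r-1/j}^f$ is residual and $f\in\mathcal D_r^*$, proving $\mathcal D_r^*=\mathcal R_r^*$ closed, and likewise $\mathcal S_{\alpha,\beta}$ is closed (using $\overline E_\beta^f=\bigcap_j\overline E_{\beta-1/j}^f$ and the analogue for $\underline E_\alpha^f$). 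The same comparison shows $B(f,r/4)\subset\mathcal D^*$ whenever $E_r^f$ is dense, and similarly for $\mathcal S$, giving openness of $\mathcal D^*=\mathcal R^*$ and $\mathcal S$. For (3.4): if $G^K$ is dense then $G^K=G^{\overline K}$ is residual by Theorem~\ref{thm-1}(2) and Lemma~\ref{lemma-1}; for $f\in\mathcal C_K$ one has $\overline f(x)-\underline f(x)\geq r:=\sup_{\mu\in K}\int f\,d\mu-\inf_{\mu\in K}\int f\,d\mu>0$ on $G^K$, so $E_r^f\supset G^K$ is residual and $f\in\mathcal R^*=\mathcal D^*$; and $\mathcal D^*$ is then open by (3.2) and dense because (Theorem~\ref{thm-2}, with $\sharp K>1$) $\mathcal C_K$ is dense.

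\textbf{Parts (3.3) and (4).} For (3.3) the obstacle is that $\mathcal D^*$ may be uncountable. To handle it, for each $f\in\mathcal D^*$ pick $r_f>0$ with $E_{r_f}^f$ dense (hence residual); the comparison estimate gives $\{x:\overline g(x)-\underline g(x)\geq r_f/2\}\supset E_{r_f}^f$ for every $g\in B(f,r_f/4)$, so $\mathcal D^*=\bigcup_{f\in\mathcal D^*}B(f,r_f/4)$. Separability of $C(X)$ (Lindel\"of) yields a countable subcover $\mathcal D^*=\bigcup_jB(f_j,r_{f_j}/4)$, and then $\bigcap_jE_{r_{f_j}}^{f_j}\subset I(f,T)$ for every $f\in\mathcal D^*$; being a countable intersection of residual sets, $\bigcap_{f\in\mathcal D^*}I(f,T)$ is residual. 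For (4), assume $(X,T)$ transitive with $\mathcal R\neq\emptyset$. By (3.1) it remains to show $\mathcal R\subset\mathcal S$: for $f\in\mathcal R$, Lemma~\ref{lemma-7} gives $x_0\in I(f,T)\cap Trans(X,T)$; choosing $\underline f(x_0)<\alpha<\beta<\overline f(x_0)$ and using that $\overline f$ and $\underline f$ are constant along orbits, the dense orbit of $x_0$ lies in $\overline E_\beta^f\cap\underline E_\alpha^f$, so $f\in\mathcal S_{\alpha,\beta}$. Hence $\mathcal R=\mathcal D^*=\mathcal R^*=\mathcal S$, which is open by (3.2) and dense because, fixing $h\in\mathcal R$ and $x_0\in I(h,T)\cap Trans(X,T)$, Lemma~\ref{lemma-7} gives $\mathcal U_{x_0}\subset\mathcal R$ while Lemma~\ref{lemma-8} gives $\mathcal U_{x_0}$ dense; and $\bigcap_{f\in\mathcal R}I(f,T)$ is residual by (3.3). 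I expect the two steps needing real care to be the $G_\delta$ identification of $E_r^f$ (all the ``dense $\Leftrightarrow$ residual'' passages rest on it) and the Lindel\"of compactness trick in (3.3).
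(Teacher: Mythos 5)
Your proof is correct and follows essentially the same route as the paper's: the open sets $\mathcal{U}_x$ with Lemmas \ref{lemma-7} and \ref{lemma-8} for (1), (2) and (4), the $G_\delta$ identification of $E_r^f$, $\overline{E}_\beta^f$, $\underline{E}_\alpha^f$ to equate dense and residual, the sup-norm comparison for closedness/openness in (3.2), and the Lindel\"of covering trick for (3.3). The only deviations are cosmetic (a countable base instead of a countable dense subset of $I(f,T)$ in (2), and writing $E_r^f=\bigcap_j E_{r-1/j}^f$ in the closedness argument instead of the paper's direct $\varepsilon$-estimate).
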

\begin{proof}
	(1) Since $\mathcal{C}=\bigcup\limits_{x\in X}\mathcal{U}_x$ and $\mathcal{C}\neq\emptyset$, by Lemma \ref{lemma-8}, we have $\mathcal{C}$ is an open and dense set in $C(X)$. Choose $f\in\mathcal{C}$ and $x\in I(f,T)$, let $\mathcal{U}_1=\mathcal{U}_x$, we have $\mathcal{U}_1$ is an open and dense set and $\bigcap\limits_{f\in\mathcal{U}_1} I(f,T)$ is nonempty.
	
	(2) Since $\mathcal{D}\neq\emptyset$ and $X$ is a compact metric space, we can choose $f\in\mathcal{D}$ and a countable set $\{x_1,x_2,\cdots\}\subset I(f,T)$, which is dense in $X$. Then $\bigcap\limits_{i=1}^{+\infty}\mathcal{U}_{x_i}\subset\mathcal{D}$, which means that $\mathcal{D}$ is residual in $C(X)$ and there is a residual set $\mathcal{U}_2:=\bigcap\limits_{i=1}^{+\infty}\mathcal{U}_{x_i}$ in $C(X)$
	such that $\bigcap\limits_{f\in\mathcal{U}_2} I(f,T)$ is dense in $X$.

	(3) (3.1) We only show $\mathcal{S}\subset\mathcal{R}^*$ and $\mathcal{D}^*=\mathcal{R}^*.$
	
	For any $f\in \mathcal{S}$ there are $\alpha<\beta$ such that $\overline{E}_\beta^f$ and $\underline{E}_\alpha^f$ are both dense in $X.$ Since $\overline{E}_\beta^f=\cap_{k\in\mathbb{N^{+}}}\cap_{N\in\mathbb{N^{+}}}\cup_{n\geq N}\left\{x\in X\mid \frac1n\sum_{i=0}^{n-1}f(T^ix)>\beta-\frac{1}{k} \right\}$ and $\underline{E}_\alpha^f=\cap_{k\in\mathbb{N^{+}}}\cap_{N\in\mathbb{N^{+}}}\cup_{n\geq N}\left\{x\in X\mid \frac1n\sum_{i=0}^{n-1}f(T^ix)<\alpha+\frac{1}{k} \right\}$ are two $G_\delta$ sets,
	then $\overline{E}_\beta^f$ and $\underline{E}_\alpha^f$ are both residual in $X.$ Thus $\overline{E}_\beta^f\cap \underline{E}_\alpha^f$ is residual in $X$ and $\overline{E}_\beta^f\cap \underline{E}_\alpha^f\subset E_{\beta-\alpha}^f$ which imply $f\in \mathcal{R}^*.$ So $\mathcal{S}\subset\mathcal{R}^*.$
	
	Since $E_r^f=\cap_{k\in\mathbb{N^{+}}}\cap_{N\in\mathbb{N^{+}}}\cup_{n\geq N}\cup_{m\geq N}\left\{x\in X\mid |\frac1n\sum_{i=0}^{n-1}f(T^ix)-\frac1m\sum_{i=0}^{m-1}f(T^ix)|>r-\frac{1}{k} \right\}$ is a $G_\delta$ set, then $E_r^f$ is residual in $X$ if and only if $E_r^f$ is dense in $X.$ So $\mathcal{D}_r^*=\mathcal{R}_r^*$ for any $r>0$ and thus $\mathcal{D}^*=\mathcal{R}^*.$
	
	(3.2) For any  $\{f_m\}_{m=1}^{+\infty}\subset \mathcal{R}_r^*$ with $\lim\limits_{m\to\infty}||f_m-f||=0$ for some $f\in C(X),$ we show that $f\in \mathcal{R}_r^*$ which implies $\mathcal{R}_r^*$ is closed. By definition of $\mathcal{R}_r^*,$ $\cap_{m=1}^{+\infty}E_r^{f_m}$ is residual in $X.$ For any $x\in \cap_{m=1}^{+\infty}E_r^{f_m}$ and any $m\in\mathbb{N^+},$ one has $\limsup\limits_{n\to+\infty}\frac1n\sum\limits_{i=0}^{n-1}f_m(T^ix)-\liminf\limits_{n\to+\infty}\frac1n\sum\limits_{i=0}^{n-1}f_m(T^ix)\geq r.$ Given an arbitrary $\varepsilon>0,$ there is $N\in\mathbb{N^{+}}$ such that $||f_m-f||<\varepsilon$ for any $m\geq N.$ Then $\frac1n\sum\limits_{i=0}^{n-1}f_m(T^ix)-\varepsilon<\frac1n\sum\limits_{i=0}^{n-1}f(T^ix)<\frac1n\sum\limits_{i=0}^{n-1}f_m(T^ix)+\varepsilon,$ this imlpies $\limsup\limits_{n\to+\infty}\frac1n\sum\limits_{i=0}^{n-1}f_m(T^ix)-\varepsilon\leq \limsup\limits_{n\to+\infty}\frac1n\sum\limits_{i=0}^{n-1}f(T^ix)$ and $\liminf\limits_{n\to+\infty}\frac1n\sum\limits_{i=0}^{n-1}f(T^ix)\leq \liminf\limits_{n\to+\infty}\frac1n\sum\limits_{i=0}^{n-1}f_m(T^ix)+\varepsilon.$ So $\limsup\limits_{n\to+\infty}\frac1n\sum\limits_{i=0}^{n-1}f(T^ix)-\liminf\limits_{n\to+\infty}\frac1n\sum\limits_{i=0}^{n-1}f(T^ix)\geq r-2\varepsilon.$ By arbitrariness of $\varepsilon,$ we have $f\in \mathcal{R}_r^*.$
	
	Note that $\overline{E}_\beta^f$ and $\underline{E}_\alpha^f$ are two $G_\delta$ sets, $\mathcal{S}_{\alpha,\beta}$ are closed using similar method.
	Since $\mathcal{D}_r^*=\mathcal{R}_r^*$ by the proof of item (3.1), $\mathcal{D}_r^*$ is closed.
	
	For any $f\in C(X)$ and any $s>0,$ denote $B(f,s):=\{g\in C(X)\mid ||g-f||<s\}.$ Then for any $g\in B(f,s),$ one has $\frac1n\sum\limits_{i=0}^{n-1}f(T^ix)-s<\frac1n\sum\limits_{i=0}^{n-1}g(T^ix)<\frac1n\sum\limits_{i=0}^{n-1}f(T^ix)+s,$ this imlpies $\limsup\limits_{n\to+\infty}\frac1n\sum\limits_{i=0}^{n-1}f(T^ix)-s\leq \limsup\limits_{n\to+\infty}\frac1n\sum\limits_{i=0}^{n-1}g(T^ix)$ and $\liminf\limits_{n\to+\infty}\frac1n\sum\limits_{i=0}^{n-1}g(T^ix)\leq \liminf\limits_{n\to+\infty}\frac1n\sum\limits_{i=0}^{n-1}f(T^ix)+s.$ Given $f\in \mathcal{R}_r^*$ for some $r>0,$ then for any $g\in B(f,\frac{r}{3})$ and $x\in E_r^f$ one has $\limsup\limits_{n\to+\infty}\frac1n\sum\limits_{i=0}^{n-1}g(T^ix)-\liminf\limits_{n\to+\infty}\frac1n\sum\limits_{i=0}^{n-1}g(T^ix)\geq \frac{r}{3}$ which implies $E_r^f\subset E_{\frac{r}{3}}^g$ and $B(f,\frac{r}{3})\subset \mathcal{R}_{\frac{r}{3}}^*.$ So $\mathcal{R}^*=\mathcal{D}^*$ is open. Given $f\in \mathcal{S}_{\alpha,\beta}$ for some $\alpha,\beta\in\mathbb{R}$ with $\alpha<\beta,$ then for any $g\in B(f,\frac{\beta-\alpha}{3})$ and $x\in \overline{E}_\beta^f\cap\underline{E}_\alpha^f$ one has $\limsup\limits_{n\to+\infty}\frac1n\sum\limits_{i=0}^{n-1}g(T^ix)\geq \beta-\frac{\beta-\alpha}{3}$ and $\liminf\limits_{n\to+\infty}\frac1n\sum\limits_{i=0}^{n-1}g(T^ix)\leq \alpha+\frac{\beta-\alpha}{3}$ which implies $B(f,\frac{\beta-\alpha}{3})\subset \mathcal{S}_{\frac{2\beta+\alpha}{3},\frac{\beta+2\alpha}{3}}.$ So $\mathcal{S}$ is open.
	
	(3.3) By the proof if item (3.2), for any $f\in \mathcal{D}^*$ there is $r_f>0$ such that $f\in \mathcal{D}_{r_f}^*,$ $B(f,\frac{r_f}{3})\subset \mathcal{D}_{\frac{r_f}{3}}^*$ and $E_{r_f}^f \subset E_{\frac{r_f}{3}}^g$ for any $g\in B(f,\frac{r_f}{3}).$ Since $\mathcal{D}^*$ is a separable metric space and $\mathcal{D}^*=\bigcup\limits_{f\in \mathcal{D}^*}B(f,\frac{r_f}{3})$,  we can find a sequence $\{f_i\}_{i=1}^{+\infty}$ such that $\mathcal{D}^*=\bigcup\limits_{i=1}^{+\infty}B(f_i,\frac{r_{f_i}}{3})$. Then $$\bigcap\limits_{f\in\mathcal{D}^*} I(f,T)=\bigcap\limits_{i=1}^{+\infty}\bigcap\limits_{f\in B(f_i,\frac{r_{f_i}}{3})} I(f,T)\supset\bigcap\limits_{i=1}^{+\infty}E_{r_{f_i}^{f_i}}.$$ Therefore,  $\bigcap\limits_{f\in\mathcal{D}^*} I(f,T)$ is residual in $X.$
	
	(3.4) 
	Note that $\limsup\limits_{n\to+\infty}\frac1n\sum\limits_{i=0}^{n-1}f(T^ix)-\liminf\limits_{n\to+\infty}\frac1n\sum\limits_{i=0}^{n-1}f(T^ix)\geq \sup_{\nu\in K}\int fd\mu-\inf_{\nu\in K}\int fd\mu$ for any $f\in \mathcal{C}_{K}$ and $x\in G^{K}.$
	If $G^{K}$ is dense in $X,$ then $\mathcal{C}_{K}\subset \mathcal{D}^*$ and thus $\mathcal{D}^*$ is open and dense in $C(X)$ by item (3.2) and Theorem  \ref{thm-2}.
	
	(4) (i) By Lemma \ref{lemma-7}, we have that $\mathcal{R}=\bigcup\limits_{x\in Trans(X,T)}\mathcal{U}_x$. Since $\mathcal{R}\neq\emptyset$, by Lemma \ref{lemma-8}, we have that $\mathcal{R}$ is an open and dense set. For any $f\in \mathcal{R},$ by Lemma \ref{lemma-7} one has $I(f,T)\cap Trans(X,T)\neq\emptyset.$ Take $x_0\in I(f,T)\cap Trans(X,T).$  Then $orb(x_0,T)\subset \overline{E}_\beta^f\cap \underline{E}_\alpha^f$ where $\beta=\limsup\limits_{n\to+\infty}\frac1n\sum\limits_{i=0}^{n-1}f(T^ix_0)$ and $\alpha=\liminf\limits_{n\to+\infty}\frac1n\sum\limits_{i=0}^{n-1}f(T^ix_0).$ So $f\in \mathcal{S}_{\alpha,\beta}$ and $\mathcal{R}\subset\mathcal{S}.$ According to $\mathcal{S}\subset \mathcal{D}^*=\mathcal{R}^*\subset \mathcal{R}$ we have $\mathcal{R}=\mathcal{D}^*=\mathcal{R}^*=\mathcal{S}.$
	
	(ii) By item (3.3) and (4)(i), $\bigcap\limits_{f\in\mathcal{R}} I(f,T)=\bigcap\limits_{f\in\mathcal{D}^*} I(f,T)$ is residual in $X$.
\end{proof}

\section{Proof of Theorem \ref{maintheorem-3}}\label{section-4}
\subsection{Proof of Theorem \ref{maintheorem-3}(1)}

Before giving the proof of Theorem \ref{maintheorem-3}(1), we need one lemma.
We define $C^*(X):=\{f:X\to \mathbb{R}\mid \text{ there is } Y\subset X \text{ such that } f|_Y \text{ is continuous and } Y \text{ is residual in }X\}.$ There are many functions in $C^*(X)$ which may not be continuous. For example, every upper semicontinuous function is in $C^*(X).$ By \cite[Theorem 7.3]{Oxtoby1980}, if $f$ can be represented as the limit of an everywhere convergent sequence of continuous functions, then $f$ is continuous except at a set of points of first category. This implies that for any differentiable function $f,$ its derivative $f'$ is in $C^*(X).$

\begin{lemma}\label{prop-AE}
	Suppose that $(X,T)$ is a transitive dynamical system. Given a subadditive sequence $\mathfrak{F}=\{f_n\}_{n=1}^{\infty}$ with $f_n\in C^*(X)$ for any $n\in\mathbb{N},$ if $T$-invarinat subset $Y\subset X$ is residual in $X$ and $f_n|_Y$ is continuous for any $n\in\mathbb{N},$ then $E_{\mathfrak{F}_{\sup}^T}=\{y\in Trans(X,T)\cap Y\mid \limsup\limits_{n\to+\infty}\frac{1}{n}f_n(y)=\mathfrak{F}_{\sup}^T(Y) \}$ is residual in $X$ where $\mathfrak{F}_{\sup}^T(Y)=\sup\limits_{x\in Trans(X,T)\cap Y}\limsup\limits_{n\to+\infty}\frac{1}{n}f_n(x).$
\end{lemma}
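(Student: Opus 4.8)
The plan is to show that a residual set of transitive points in $Y$ all attain the supremum $\mathfrak{F}_{\sup}^T(Y)$, by building an explicit residual $G_\delta$ set out of superlevel sets of the functions $\frac1n f_n$ and exploiting the fact that subadditivity forces $\limsup_n\frac1n f_n$ to be nondecreasing along forward orbits.

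Write $\phi(x):=\limsup_{n\to\infty}\frac1n f_n(x)$ and $\beta:=\mathfrak{F}_{\sup}^T(Y)$. By the very definition of $\beta$ one has $\phi(x)\le\beta$ for every $x\in Trans(X,T)\cap Y$, so that $E_{\mathfrak{F}_{\sup}^T}=\{y\in Trans(X,T)\cap Y:\phi(y)\ge\beta\}$. Since $(X,T)$ is transitive, $Trans(X,T)$ is residual, and $Y$ is residual by hypothesis, so $Trans(X,T)\cap Y$ is residual; hence it suffices to exhibit a residual set $G\subseteq X$ with $Trans(X,T)\cap Y\cap G\subseteq E_{\mathfrak{F}_{\sup}^T}$.

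The first step is to encode $\phi$ topologically. For $q\in\mathbb{Q}$ and $n\ge1$ the set $\{z\in Y:f_n(z)>nq\}$ is open in the subspace $Y$, because $f_n|_Y$ is continuous, so there is a largest open set $U_{n,q}\subseteq X$ with $U_{n,q}\cap Y=\{z\in Y:f_n(z)>nq\}$; put $O_{q,N}:=\bigcup_{n\ge N}U_{n,q}$, an open subset of $X$. I will argue (next paragraph) that $O_{q,N}$ is dense in $X$ whenever $q<\beta$. Granting this, $G:=\bigcap_{q\in\mathbb{Q},\,q<\beta}\bigcap_{N\ge1}O_{q,N}$ is a countable intersection of open dense sets, hence residual. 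Moreover, if $y\in Trans(X,T)\cap Y\cap G$ then for every rational $q<\beta$ and every $N$ there is $n\ge N$ with $y\in U_{n,q}$, whence $y\in U_{n,q}\cap Y$ and $f_n(y)>nq$; thus $\frac1n f_n(y)>q$ for infinitely many $n$, so $\phi(y)\ge q$. Letting $q\uparrow\beta$ gives $\phi(y)\ge\beta$, so $y\in E_{\mathfrak{F}_{\sup}^T}$. (When $\beta=+\infty$ one lets $q$ run over all of $\mathbb{Q}$; when $\beta=-\infty$ the index set is empty, $G=X$, and the conclusion is immediate.)

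It remains to prove density of $O_{q,N}$ for $q<\beta$, which is the heart of the matter and the only genuine obstacle. Fix such a $q$ and pick $x_*\in Trans(X,T)\cap Y$ with $\phi(x_*)>q$, which exists since $q<\beta=\sup_{x\in Trans(X,T)\cap Y}\phi(x)$. The key observation is that subadditivity with step $1$, namely $f_{n+1}(x)\le f_1(x)+f_n(Tx)$, yields after dividing by $n+1$ and taking $\limsup$ that $\phi(x)\le\phi(Tx)$ for every $x\in X$; iterating, $\phi(T^m x_*)\ge\phi(x_*)>q$ for all $m\ge0$. Since $Y$ is $T$-invariant, $T^m x_*\in Y$, and since $\phi(T^m x_*)>q$ we have $f_n(T^m x_*)>nq$ for infinitely many $n$, so $T^m x_*\in U_{n,q}$ for arbitrarily large $n$, i.e. $T^m x_*\in O_{q,N}$ for every $N$. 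Thus $orb(x_*,T)\subseteq O_{q,N}$, and because $x_*$ is a transitive point $orb(x_*,T)$ is dense; hence $O_{q,N}$ is dense. The point to emphasize is exactly this: subadditivity supplies only the one-sided information $\phi\le\phi\circ T$, but this already guarantees that $\phi$ never decreases along a forward orbit, so a single near-optimal transitive orbit simultaneously witnesses the density of all the sets $O_{q,N}$; beyond this, the argument is routine Baire-category bookkeeping together with the already-recorded residuality of $Trans(X,T)$ for transitive systems.
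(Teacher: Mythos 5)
Your proof is correct and follows essentially the same route as the paper's: both exploit the subadditivity inequality $f_{n+1}(x)\le f_n(Tx)+f_1(x)$ to get that $\limsup_n\frac1n f_n$ is nondecreasing along forward orbits, then use the dense orbit of a near-optimal transitive point in $Y$ to show the relevant superlevel sets are dense open, and finish by Baire category. The only difference is cosmetic: the paper works with relatively open subsets of the residual subspace $Trans(X,T)\cap Y$, while you extend them to open subsets of $X$; the two bookkeeping schemes are equivalent.
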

\begin{proof}
	For any $\beta<\mathfrak{F}_{\sup}^T,$ denote $E_\beta=\cap_{N=0}^{\infty}\cup_{n=N}^{\infty}\{y\in Trans(X,T)\cap Y\mid \frac1nf_n(y)>\beta\}.$ By $\beta<\mathfrak{F}_{\sup}^T,$ there exists $x_0\in Trans(X,T)\cap Y$ such that $x_0\in E_\beta.$ Since $f_n|_Y$ is continous, $\{y\in Trans(X,T)\cap Y\mid \frac1nf_n(y)>\beta\}$ is open in $Trans(X,T)\cap Y$. Then $E_\beta$ is a nonempty $G_\delta$ set in $Trans(X,T)\cap Y$. By $f_{n+1}(x)\leq f_n(Tx)+f_1(x),$
	we have $\limsup\limits_{n\to+\infty}\frac1nf_n(Tx)\geq\limsup\limits_{n\to+\infty}\frac1nf_n(x)$ for any $x\in X.$
	Then $T^nx_0\in E_\beta$ for any $n\in\mathbb{N}$ by $x_0\in E_\beta.$ So $E_\beta$ is a dense $G_\delta$ set in $Trans(X,T)\cap Y$.
	
	Choose $\{\beta_n\}_{n=1}^{\infty}\subset \mathbb{R}$ with $\beta_n<\mathfrak{F}_{\sup}^T$ and $\lim\limits_{n\to\infty}\beta_n=\mathfrak{F}_{\sup}^T.$ Then $E_{\beta_n}$ is a dense $G_\delta$ set in $Trans(X,T)\cap Y$, and $\cap_{n=1}^{\infty}E_{\beta_n}\subset E_{\mathfrak{F}_{\sup}^T}.$ So $E_{\mathfrak{F}_{\sup}^T}$ is residual in $Trans(X,T)\cap Y.$ Since $Trans(X,T)\cap Y$ is residual in $X,$ we have $E_{\mathfrak{F}_{\sup}^T}$ is residual in $X.$
\end{proof}
Now, we prove Theorem \ref{maintheorem-3}(1).

\noindent\textbf{Proof of Theorem \ref{maintheorem-3}(1)}:
Let $Y=X$ in Lemma \ref{prop-AE}, we complete the proof of Theorem \ref{maintheorem-3}(1). \qed

\subsection{Proof of Theorem \ref{maintheorem-3}(2)}
Before giving the proof of Theorem \ref{maintheorem-3}(2), we need one lemma.
\begin{lemma}\label{lemma-AC}
	Suppose that $(X,T)$ is a transitive dynamical system. Given a subadditive sequence $\mathfrak{F}=\{f_n\}_{n=1}^{\infty}\subset C(X),$  denote $\mathfrak{F}_{\inf}^T=\inf\limits_{x\in Trans(X,T)}\liminf\limits_{n\to+\infty}\frac{1}{n}f_n(x),$
	if for any $x\in X$ there is $C_n(x)\in\mathbb{R}$ with $\lim\limits_{n\to\infty}\frac{1}{n}C_n(x)=0$ such that $f_{n}(Tx)\leq f_{n+1}(x)+C_n(x).$ then $\tilde{E}_{\mathfrak{F}_{\inf}^T}=\{y\in Trans(X,T)\mid \liminf\limits_{n\to+\infty}\frac{1}{n}f_n(y)=\mathfrak{F}_{\inf}^T \}$ is residual in $X.$
\end{lemma}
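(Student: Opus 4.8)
The plan is to run the proof of Lemma~\ref{prop-AE} almost verbatim, with $\limsup$ and $\sup$ replaced by $\liminf$ and $\inf$ throughout, the only genuinely new ingredient being that the extra hypothesis $f_n(Tx)\le f_{n+1}(x)+C_n(x)$ with $\frac1n C_n(x)\to0$ is exactly what one needs to propagate a \emph{small} liminf along a transitive orbit. Plain subadditivity runs the wrong way here: with $m=1$ it only gives $f_{n+1}(x)\le f_n(Tx)+f_1(x)$, which pushes a \emph{large} limsup forward (as used in Lemma~\ref{prop-AE}) but says nothing useful about the liminf.

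First I would establish the monotonicity of $\varphi(x):=\liminf_{n\to+\infty}\frac1n f_n(x)$ along forward orbits, i.e.\ $\varphi(Tx)\le\varphi(x)$ for every $x\in X$. Choosing a subsequence $(n_k)$ with $\frac1{n_k}f_{n_k+1}(x)\to\liminf_n\frac1n f_{n+1}(x)$, the hypothesis gives $\frac1{n_k}f_{n_k}(Tx)\le\frac1{n_k}f_{n_k+1}(x)+\frac1{n_k}C_{n_k}(x)$, and letting $k\to\infty$ (using $\frac1{n_k}C_{n_k}(x)\to0$) yields $\varphi(Tx)=\liminf_n\frac1n f_n(Tx)\le\liminf_n\frac1n f_{n+1}(x)$; the reindexing $\ell=n+1$, for which $\frac{\ell}{\ell-1}\to1$ stays bounded, then gives $\liminf_n\frac1n f_{n+1}(x)\le\liminf_\ell\frac1\ell f_\ell(x)=\varphi(x)$.

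Next, for each real $\alpha>\mathfrak{F}_{\inf}^T$ I would put
\[
\tilde{E}_\alpha:=\bigcap_{N=0}^{\infty}\bigcup_{n=N}^{\infty}\{y\in Trans(X,T)\mid \tfrac1n f_n(y)<\alpha\},
\]
and verify: (i) each slice is relatively open in $Trans(X,T)$ because $f_n\in C(X)$, and $Trans(X,T)$ is a dense $G_\delta$ in $X$, so $\tilde{E}_\alpha$ is a $G_\delta$ subset of $X$; (ii) by definition of $\mathfrak{F}_{\inf}^T=\inf_{x\in Trans(X,T)}\varphi(x)$ there is $x_0\in Trans(X,T)$ with $\varphi(x_0)<\alpha$, hence by the monotonicity step $\varphi(T^jx_0)<\alpha$ for every $j\ge0$, each $T^jx_0$ is again transitive (since $\omega_T(T^jx_0)=\omega_T(x_0)$), and therefore $orb(x_0,T)\subset\tilde{E}_\alpha$; (iii) since $orb(x_0,T)$ is dense in $X$, $\tilde{E}_\alpha$ is a dense $G_\delta$, hence residual in $X$. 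Finally I would pick $\alpha_k\downarrow\mathfrak{F}_{\inf}^T$ (take $\alpha_k=\mathfrak{F}_{\inf}^T+1/k$, or $\alpha_k=-k$ if $\mathfrak{F}_{\inf}^T=-\infty$, the latter using that an infimum equal to $-\infty$ supplies, for each $k$, a transitive point with $\varphi<-k$), so that $\bigcap_k\tilde{E}_{\alpha_k}$ is residual in $X$; using the elementary sandwich $\{\varphi<\alpha\}\cap Trans(X,T)\subseteq\tilde{E}_\alpha\subseteq\{\varphi\le\alpha\}\cap Trans(X,T)$, both ends of which collapse to $\{\varphi\le\mathfrak{F}_{\inf}^T\}\cap Trans(X,T)$ after intersecting over $k$, together with $\varphi\ge\mathfrak{F}_{\inf}^T$ on $Trans(X,T)$, one gets $\bigcap_k\tilde{E}_{\alpha_k}=\{y\in Trans(X,T)\mid\varphi(y)=\mathfrak{F}_{\inf}^T\}=\tilde{E}_{\mathfrak{F}_{\inf}^T}$, which is therefore residual in $X$.

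The main obstacle is entirely concentrated in the monotonicity step of the second paragraph: one must resist using subadditivity (the wrong inequality) and instead exploit the reverse estimate $f_n(Tx)\le f_{n+1}(x)+C_n(x)$, and one must double-check that the reindexing does no harm even when $\varphi(x)=-\infty$, which it does not because the factor $\frac{\ell}{\ell-1}$ is bounded. Everything afterwards is the same Baire-category bookkeeping as in Lemma~\ref{prop-AE}.
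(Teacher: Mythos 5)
Your proposal is correct and follows essentially the same route as the paper: the same sets $\tilde{E}_\alpha$, the same key monotonicity $\liminf_n\frac1n f_n(Tx)\le\liminf_n\frac1n f_n(x)$ derived from $f_n(Tx)\le f_{n+1}(x)+C_n(x)$, propagation along the dense orbit of a transitive point, and intersection over $\alpha_k\downarrow\mathfrak{F}_{\inf}^T$. The only differences are that you spell out the reindexing and the case $\mathfrak{F}_{\inf}^T=-\infty$, which the paper leaves implicit.
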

\begin{proof}
	For any $\alpha>\mathfrak{F}_{\inf}^T,$ denote $\tilde{E}_\alpha=\cap_{N=0}^{\infty}\cup_{n=N}^{\infty}\{y\in Trans(X,T)\mid \frac1nf_n(y)<\alpha\}.$ By $\alpha>\mathfrak{F}_{\inf}^T,$ there exists $x_0\in Trans(X,T)$ such that $x_0\in \tilde{E}_\alpha.$ Since $f_n$ is continous, $\{y\in Trans(X,T)\mid \frac1nf_n(y)<\alpha\}$ is open in $Trans(X,T)$. Then $\tilde{E}_\alpha$ is a nonempty $G_\delta$ set in $Trans(X,T)$. By $f_{n}(Tx)\leq f_{n+1}(x)+C_n(x),$
	we have $\liminf\limits_{n\to+\infty}\frac1nf_n(Tx)\leq\liminf\limits_{n\to+\infty}\frac1nf_n(x)$ for any $x\in X.$
	Then $T^nx_0\in \tilde{E}_\alpha$ for any $n\in\mathbb{N}$ by $x_0\in \tilde{E}_\alpha.$ So $\tilde{E}_\alpha$ is a dense $G_\delta$ set in $Trans(X,T)$.
	
	Choose $\{\alpha_n\}_{n=1}^{\infty}\subset \mathbb{R}$ with $\alpha_n>\mathfrak{F}_{\inf}^T$ and $\lim\limits_{n\to\infty}\alpha_n=\mathfrak{F}_{\inf}^T.$ Then $\tilde{E}_{\alpha_n}$ is a dense $G_\delta$ set in $Trans(X,T)$, and $\cap_{n=1}^{\infty}\tilde{E}_{\alpha_n}\subset \tilde{E}_{\mathfrak{F}_{\inf}^T}.$ So $\tilde{E}_{\mathfrak{F}_{\inf}^T}$ is residual in $Trans(X,T).$ Since $Trans(X,T)$ is residual in $X,$ we have $\tilde{E}_{\mathfrak{F}_{\inf}^T}$ is residual in $X.$
\end{proof}

Now, we prove Theorem \ref{maintheorem-3}(2).

\noindent\textbf{Proof of Theorem \ref{maintheorem-3}(2)}:
Note that $\|A_n(Tx)\|\leq \|A_{n+1}(x)\|\|A(x)^{-1}\|$ and $\{\log\|A_n(x)\|\}_{n=1}^{\infty}$ is a subadditive sequence. Using Lemma \ref{prop-AE} and Lemma \ref{lemma-AC}, we have Theorem \ref{maintheorem-3}(2). \qed

\subsection{Proof of Theorem \ref{maintheorem-3}(3)}
Before giving the proof of Theorem \ref{maintheorem-3}(3), we need one proposition.
\begin{proposition}\label{prop-1}
	Suppose that $(X,T)$ is a dynamical system. If $\mathcal{R}\neq\emptyset,$ then for any $d\geq 1,$ one has $\emptyset\neq\mathcal{R}_d\subset\mathcal{C}_d$ and  $\mathcal{C}_d$ is dense in $C(X,GL(d,\mathbb{R})).$ In particular, if further $(X,T)$ is transitive, then $\mathcal{R}_d$ is dense in $C(X,GL(d,\mathbb{R})).$
\end{proposition}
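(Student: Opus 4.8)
The plan is to obtain the required matrix cocycles by multiplying an arbitrary cocycle by a scalar factor built from a function in $\mathcal{R}$. Fix $f\in\mathcal{R}$, so $I(f,T)$ is residual in $X$, and write $S_nf(x):=\sum_{i=0}^{n-1}f(T^ix)$. To see $\mathcal{R}_d\neq\emptyset$ for every $d\ge1$, I would take $A(x):=e^{f(x)}I_d$ (this is $A(x)=e^{f(x)}$ when $d=1$): it is continuous with values in $GL(d,\mathbb{R})$, and since the scalar factor commutes out, $A_n(x)=e^{S_nf(x)}I_d$, so $\frac1n\log\|A_n(x)\|=\frac1nS_nf(x)+\frac1n\log\|I_d\|$ converges precisely when $\frac1nS_nf(x)$ does. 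Hence $LI_A=I(f,T)$ is residual and $A\in\mathcal{R}_d$. As $X$ is a nonempty compact, hence Baire, metric space, residual subsets of $X$ are nonempty, so the inclusion $\mathcal{R}_d\subset\mathcal{C}_d$ is automatic.

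For the density of $\mathcal{C}_d$, I would fix $B\in C(X,GL(d,\mathbb{R}))$ and $\varepsilon>0$, pick a point $x_0\in I(f,T)$, and study the one-parameter family $A_t(x):=e^{tf(x)}B(x)\in C(X,GL(d,\mathbb{R}))$ for $t\in\mathbb{R}$. Pulling the scalar out gives $(A_t)_n(x_0)=e^{tS_nf(x_0)}B_n(x_0)$, whence
$$\frac1n\log\|(A_t)_n(x_0)\|=t\,a_n+b_n,\qquad a_n:=\frac1nS_nf(x_0),\quad b_n:=\frac1n\log\|B_n(x_0)\|.$$
The key observation is that $(t\,a_n+b_n)_{n\ge1}$ can converge for at most one value of $t$: if it converged for two distinct parameters $t_1\neq t_2$, subtracting the two sequences would force $a_n$ to converge, contradicting $x_0\in I(f,T)$. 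Since $\|A_t-B\|=\sup_{x}|e^{tf(x)}-1|\,\|B(x)\|\to0$ as $t\to0$, I would then choose $t$ close to $0$ but distinct from the (at most one) exceptional value; this gives $x_0\in LI_{A_t}$, so $A_t\in\mathcal{C}_d$ with $\|A_t-B\|<\varepsilon$, proving density.

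For the final assertion, assume in addition that $(X,T)$ is transitive. By Lemma \ref{lemma-7}, $f\in\mathcal{R}$ yields $I(f,T)\cap Trans(X,T)\neq\emptyset$, so in the previous paragraph the point $x_0$ may be chosen transitive. The perturbation $A_t$ built there then satisfies $x_0\in LI_{A_t}\cap Trans(X,T)$; in particular the infimum over transitive points of the lower Lyapunov exponent of $A_t$ lies strictly below the supremum over transitive points of its upper Lyapunov exponent. By Theorem \ref{maintheorem-3}(2) applied to $A_t$, this rules out the first alternative and forces $LI_{A_t}$ to be residual in $X$, i.e. $A_t\in\mathcal{R}_d$. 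Since $A_t$ is $\varepsilon$-close to $B$, it follows that $\mathcal{R}_d$ is dense in $C(X,GL(d,\mathbb{R}))$.

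I expect the density of $\mathcal{C}_d$ to be the only real obstacle: one has no control over the oscillation of $\frac1n\log\|B_n(x_0)\|$, which could a priori cancel the oscillation of the Birkhoff averages of $f$ and make the Lyapunov exponent at $x_0$ converge. The way around it is the elementary linear remark above — this cancellation can occur for at most one rescaling parameter — leaving an entire punctured neighbourhood of $0$ of admissible perturbations; everything else is bookkeeping with the constructions above and with Theorem \ref{maintheorem-3}(2).
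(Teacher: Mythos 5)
Your proposal is correct and follows essentially the same route as the paper: both arguments witness $\mathcal{R}_d\neq\emptyset$ with the scalar cocycle $e^{f}I_d$ for $f\in\mathcal{R}$, obtain density by perturbing an arbitrary $B$ to $e^{tf}B$, and settle the transitive case by producing a point of $LI\cap Trans(X,T)$ and invoking the dichotomy of Theorem \ref{maintheorem-3}(2). The only difference is cosmetic: the paper reduces to the cases $LI_B=\emptyset$ (resp.\ $LI_B$ not residual) so that $\frac1n\log\|B_n(x_0)\|$ converges at the chosen point, whereas your ``at most one exceptional $t$'' observation treats all $B$ uniformly --- a slightly cleaner bookkeeping of the same idea.
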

\begin{proof}
	For any $f\in C(X),$ define $A^f\in C(X,GL(d,\mathbb{R}))$ by $A^f(x)=e^{f(x)}I_d$ where $I_d$ is $d\times d$ identity matrix. Note that
	\begin{equation*}
		\begin{split}
			\frac1n\log\|A^f_n(x)\|&=\frac1n\log\|A^f(T^{n-1}x)\cdots A^f(Tx)A^f(x)\|\\
			&=\frac1n\log \prod_{i=0}^{n-1}e^{f(T^ix)}\\
			&=\frac1n\sum_{i=0}^{n-1}f(T^ix),
		\end{split}
	\end{equation*}
	one has $LI_{A^f}=I(f,T).$ Then $A^f\in \mathcal{R}_d$ for any $f\in \mathcal{R}.$  So $\mathcal{R}_d\neq\emptyset.$
	
	Next,  we show that $\mathcal{C}_d$ is dense in $C(X,GL(d,\mathbb{R})).$ Take $f\in \mathcal{R}.$ For any $A\in C(X,GL(d,\mathbb{R}))$ and $k\in\mathbb{N},$  let $A_{(k)}=A\cdot A^{\frac1kf}=e^{\frac1kf}A.$ Then $\|A_{(k)}-A\|=\sup\limits_{x\in X}\|A_{(k)}(x)-A(x)\|=\sup\limits_{x\in X}|e^{\frac1kf(x)}-1|\|A(x)\|\leq \max\{e^{\frac1k\|f\|}-1,1-e^{-\frac1k\|f\|}\}\|A\|$ which implies $\lim\limits_{k\to\infty}A_{(k)}=A.$ Note that
	\begin{equation*}
		\begin{split}
			\frac1n\log\|(A_{(k)})_n(x)\|&=\frac1n\log\prod_{i=0}^{n-1}e^{\frac1kf(T^ix)}\|A(T^{n-1}x)\cdots A(Tx)A(x)\|\\
			&=\frac1n\log \prod_{i=0}^{n-1}e^{\frac1kf(T^ix)}+\frac1n\log\|A_n(x)\|\\
			&=\frac1n\sum_{i=0}^{n-1}\frac1kf(T^ix)+\frac1n\log\|A_n(x)\|,
		\end{split}
	\end{equation*}
	then for $A\in C(X,GL(d,\mathbb{R}))\setminus \mathcal{C}_d,$ one has $I(f,T)\subset LI_{A_{(k)}}$ by $LI_A=\emptyset$ and $I(\frac1kf,T)=I(f,T).$ So $A_{(k)}\in \mathcal{C}_d$, this means $\mathcal{C}_d$ is dense in $C(X,GL(d,\mathbb{R})).$
	
	Finally, if further $(X,T)$ is transitive, then for $A\in C(X,GL(d,\mathbb{R}))\setminus \mathcal{R}_d,$ one can take $x\in I(f,T)\cap Trans(X,T)\cap(X\setminus LI_A)$ since $I(f,T)\cap Trans(X,T)$ is residual in $X$ and $LI_A$ is not residual in $X.$ Then $x\in LI_{A_{(k)}}\cap Trans(X,T).$ So $A_{(k)}\in \mathcal{R}_d$ by Theorem \ref{maintheorem-3}(2), this means $\mathcal{R}_d$ is dense in $C(X,GL(d,\mathbb{R})).$
\end{proof}

Now, we prove Theorem \ref{maintheorem-3}(3).

\noindent\textbf{Proof of Theorem \ref{maintheorem-3}(3)}:
Since $C(X,GL(d,\mathbb{R}))$ is compact, then by Proposition \ref{prop-1} there exist $\{^{i}A\}_{i=1}^{+\infty}\subset C(X,GL(d,\mathbb{R}))$ such that $^{i}A\in \mathcal{R}_d$ for any $i\in\mathbb{N^+}.$ By Proposition \ref{prop-3}, one has $\mathcal{M}_{dense}^*=\mathcal{M}_{Trans}.$ Then $\sharp \mathcal{M}_{dense}^*>1$ by $\sharp \mathcal{M}_{Trans}>1.$ Thus by Theorem \ref{maintheorem-2}
there exist a residual invariant subset $Y'\subset X$ such that for any $x\in Y'$  one has
\begin{enumerate}
	\item [(a)] $x\in Trans(X,T)$ and $V_T(x)=\mathcal{M}_{Trans};$
	\item [(b)] $V_T(x)\cap \mathcal{O}_{T}\neq\emptyset$ provided that $X=M$ is a manifold.
\end{enumerate}
Let $Y=\bigcap_{i=1}^{+\infty}LI_{{~}^{i}A}\cap Y'$ and $\mathfrak{U}=\{^{i}A\}_{i=1}^{+\infty},$ then we complete the proof of Theorem \ref{maintheorem-3}(3). \qed

Next, we prove Corollary \ref{coro-4} and Corollary \ref{coro-6}.

\noindent\textbf{Proof of Corollary \ref{coro-4}}:
Since $\mu_1\neq\mu_2\in \cM^{e}(X,T)$ and $S_{\mu_1}=S_{\mu_2}=X$, one has $\mu_1(Trans(X,f))=\mu_2(Trans(X,f))=1.$ Then there exist $x_1,x_2\in Trans(X,f)$ such that $\chi(x_1)\leq \int \chi(x)d\mu_1<\int \chi(x)d\mu_2\leq \chi(x_2).$ By Theorem \ref{maintheorem-3}(2), we complete the proof.  \qed

\noindent\textbf{Proof of Corollary \ref{coro-6}}:
Since $(X,T)$ is minimal, every point in $X$ is transitive point.
If there is $x_1,x_2\in X$ such that $\chi(x_1)\neq \chi(x_1),$
by Theorem \ref{maintheorem-3}(2), we have $A\in\mathcal{R}_d.$  \qed

\begin{remark}
	There are some systems satisfying Corollary \ref{coro-4}. For examples, (1) $(X,T)$ is minimal and is not uniquely ergodic. $A=e^{f(x)}I_d$ with $f\in \mathcal{R}.$ (2) $(X,T)$ has approximate product property, $C_T(X)=X,$ and is not uniquely ergodic. Then $\{\mu\in\cM^{e}(X,T)\mid S_\mu=X\}$ is residual in $\cM(X,T)$ by Remark \ref{rem-1}(1)(b). Let $A=e^{f(x)}I_d$ with $f\in \mathcal{R}.$
\end{remark}

\section{$m$-$g$-product property and proof of Theorem \ref{maintheorem-4}}\label{section-5}
\subsection{Two lemmas}
First, we give two lemmas which will be used in the proof of Theorem \ref{maintheorem-4}.
\begin{lemma}\label{lemma-5}
	Suppose that $(X,T)$ is a dynamical system. Then for any $x,y\in X,$ any $p,n\in\mathbb{N^+},$ any $\Lambda\subset \Lambda_n$ and any $f\in C(X),$ one has $|\frac{1}{p+n}\sum_{i=0}^{p+n-1}f(T^ix)-\frac{1}{n}\sum_{i=0}^{n-1}f(T^iy)|\leq \frac{p+n-\sharp \Lambda}{p+n}(B-A)+\frac{1}{p+n}|\sum_{i\in\Lambda}(f(T^{i+p}x)-f(T^iy))|,$ where $B=\sup\limits_{x\in X}f(x),$ $A=\inf\limits_{x\in X}f(x).$
\end{lemma}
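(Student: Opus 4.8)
The plan is to split the Birkhoff sum $\sum_{i=0}^{p+n-1}f(T^ix)$ along the orbit of $x$ into three pieces and compare it termwise with $\sum_{i=0}^{n-1}f(T^iy)$. First I would write
\[
\frac{1}{p+n}\sum_{i=0}^{p+n-1}f(T^ix)-\frac{1}{n}\sum_{i=0}^{n-1}f(T^iy)
=\Bigl(\frac{1}{p+n}\sum_{i=0}^{p+n-1}f(T^ix)-\frac{1}{p+n}\sum_{i=0}^{n-1}f(T^iy)\Bigr)
+\Bigl(\frac{1}{p+n}-\frac1n\Bigr)\sum_{i=0}^{n-1}f(T^iy).
\]
Rather than carry the awkward second term, it is cleaner to compare $\frac{1}{p+n}\sum_{i=0}^{p+n-1}f(T^ix)$ directly with $\frac{1}{p+n}\sum_{i=0}^{n-1}f(T^iy)$ after noticing that $\frac1n\sum_{i=0}^{n-1}f(T^iy)$ lies between $A$ and $B$, so that the discrepancy coming from the normalisation $1/n$ versus $1/(p+n)$ is absorbed. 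Concretely, the orbit segment $\{T^ix: 0\le i\le p+n-1\}$ decomposes as the initial block $\{T^ix:0\le i\le p-1\}$ (length $p$), the "matched" indices $\{T^{i+p}x: i\in\Lambda\}$, and the "unmatched" indices $\{T^{i+p}x: i\in\Lambda_n\setminus\Lambda\}$ (together with the initial block these account for $p+n-\sharp\Lambda$ terms).

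The key step is the following triangle-inequality bookkeeping. Write $S_x=\sum_{i=0}^{p+n-1}f(T^ix)$ and $S_y=\sum_{i=0}^{n-1}f(T^iy)$. Then
\[
S_x-S_y=\sum_{i\in\Lambda}\bigl(f(T^{i+p}x)-f(T^iy)\bigr)+\sum_{i=0}^{p-1}f(T^ix)+\sum_{i\in\Lambda_n\setminus\Lambda}f(T^{i+p}x)-\sum_{i\in\Lambda_n\setminus\Lambda}f(T^iy).
\]
The first sum is exactly $\sum_{i\in\Lambda}(f(T^{i+p}x)-f(T^iy))$, appearing on the right-hand side of the claimed inequality. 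Each of the remaining three sums has at most $p+n-\sharp\Lambda$ terms in total on each "side" (more precisely, the terms $f(T^ix)$ over the initial block and over $\Lambda_n\setminus\Lambda$ number $p+(n-\sharp\Lambda)=p+n-\sharp\Lambda$, and the terms $f(T^iy)$ over $\Lambda_n\setminus\Lambda$ number $n-\sharp\Lambda\le p+n-\sharp\Lambda$), and each term lies in $[A,B]$. Hence the contribution of these leftover sums to $|S_x-S_y|$ is at most $(p+n-\sharp\Lambda)(B-A)$: the $f(T^ix)$-terms are each $\le B$ and the $-f(T^iy)$-terms are each $\ge -B$, etc.; grouping optimally the worst case is bounded by $(p+n-\sharp\Lambda)(B-A)$. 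Dividing by $p+n$ and using the triangle inequality gives
\[
\Bigl|\frac{S_x}{p+n}-\frac{S_y}{p+n}\Bigr|\le\frac{p+n-\sharp\Lambda}{p+n}(B-A)+\frac{1}{p+n}\Bigl|\sum_{i\in\Lambda}\bigl(f(T^{i+p}x)-f(T^iy)\bigr)\Bigr|.
\]
Finally, to pass from $\frac{S_y}{p+n}$ to $\frac{S_y}{n}=\frac1n\sum_{i=0}^{n-1}f(T^iy)$, observe $\bigl|\frac{S_y}{n}-\frac{S_y}{p+n}\bigr|=\frac{p}{n(p+n)}|S_y|\le\frac{p}{p+n}\max\{|A|,|B|\}$; however this is not quite of the stated form, so instead I would avoid introducing $\frac{S_y}{p+n}$ at all and instead bound $\frac1n\sum_{i=0}^{n-1}f(T^iy)$ by noting it equals $\frac1n\sum_{i\in\Lambda}f(T^iy)+\frac1n\sum_{i\in\Lambda_n\setminus\Lambda}f(T^iy)$ and tracking these two pieces separately against the corresponding pieces of $\frac{1}{p+n}S_x$; the matched pieces combine into the $\Lambda$-sum (modulo a harmless $\frac1n$ vs $\frac{1}{p+n}$ factor which, applied to a quantity in $[(\sharp\Lambda/n)A,(\sharp\Lambda/n)B]$, is again absorbed into the $(B-A)$ term), and the rest is controlled by the count $p+n-\sharp\Lambda$ of "bad" slots.

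The main obstacle is purely combinatorial precision: making sure the normalisation mismatch between $1/n$ and $1/(p+n)$ is charged entirely to the $\frac{p+n-\sharp\Lambda}{p+n}(B-A)$ term rather than leaving a stray $\frac{p}{p+n}$-type error. The clean way to do this is to note that for the matched indices, $\frac{1}{p+n}f(T^{i+p}x)-\frac1n f(T^iy)=\frac{1}{p+n}(f(T^{i+p}x)-f(T^iy))-(\frac1n-\frac1{p+n})f(T^iy)$, and summing the last term over $\Lambda$ gives $-\frac{p}{n(p+n)}\sum_{i\in\Lambda}f(T^iy)$, whose absolute value is at most $\frac{p\,\sharp\Lambda}{n(p+n)}\max(|A|,|B|)\le\frac{p}{p+n}\cdot\frac{\sharp\Lambda}{n}(B-A)$ only if $A\le0\le B$; in general one replaces $f$ by $f-A$ (which shifts both averages by the same constant $A$ and hence does not change the left-hand side, while making the range $[0,B-A]$), after which every term is in $[0,B-A]$ and all the crude bounds above hold with $(B-A)$ in place of $\max(|A|,|B|)$. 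With that reduction the estimate is immediate and the stated inequality follows.
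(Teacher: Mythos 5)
Your final strategy (shift $f$ to $f-A$, then track matched and unmatched time-slots and exploit signs) can be completed, but as written the argument has two concrete problems. First, the displayed inequality
\[
\Bigl|\frac{S_x}{p+n}-\frac{S_y}{p+n}\Bigr|\le\frac{p+n-\sharp\Lambda}{p+n}(B-A)+\frac{1}{p+n}\Bigl|\sum_{i\in\Lambda}\bigl(f(T^{i+p}x)-f(T^iy)\bigr)\Bigr|
\]
is false as stated: take $f\equiv 1$, so $B-A=0$ and the $\Lambda$-sum vanishes, while $S_x/(p+n)-S_y/(p+n)=p/(p+n)>0$. The reason is that your ``leftover'' $\sum_{i=0}^{p-1}f(T^ix)+\sum_{i\in\Lambda_n\setminus\Lambda}f(T^{i+p}x)-\sum_{i\in\Lambda_n\setminus\Lambda}f(T^iy)$ has $p$ more positive terms than negative ones, and an unpaired term lying in $[A,B]$ cannot be bounded by $B-A$; no ``optimal grouping'' repairs this. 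Second, even after your (correct and genuinely useful) reduction to $f\ge 0$, the conclusion is not ``immediate'': a blind triangle inequality charges $\frac{p+n-\sharp\Lambda}{p+n}(B-A)$ for the unmatched $x$-slots \emph{plus} $\frac{n-\sharp\Lambda}{p+n}(B-A)+\frac{p}{p+n}(B-A)$ for the $y$-side (unmatched slots together with the $1/n$ versus $1/(p+n)$ defect), i.e.\ twice the allowed error. The step that actually carries the content of the lemma is the observation that, once $f\ge 0$, the $x$-side error $\frac{1}{p+n}\bigl(\sum_{i=0}^{p-1}f(T^ix)+\sum_{i\in\Lambda_n\setminus\Lambda}f(T^{i+p}x)\bigr)$ and the $y$-side error $\frac{p}{n(p+n)}S_y+\frac{1}{p+n}\sum_{i\in\Lambda_n\setminus\Lambda}f(T^iy)$ are \emph{both} nonnegative and \emph{each} bounded by $\frac{p+n-\sharp\Lambda}{p+n}(B-A)$ (using $\frac{p}{p+n}+\frac{n-\sharp\Lambda}{p+n}=\frac{p+n-\sharp\Lambda}{p+n}$), and they enter with opposite signs, so their difference obeys the same bound rather than the sum of the two bounds. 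You gesture at this (``charged entirely to'') but never verify it, and it is exactly the point labelled ``immediate''.

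For comparison, the paper's proof sidesteps all of this by never leaving the normalised averages: it writes $\frac1n\sum_{i=0}^{n-1}f(T^iy)=\frac{1}{p+n}\sum_{i=0}^{n-1}f(T^iy)+\frac{p}{n(p+n)}\sum_{i=0}^{n-1}f(T^iy)$ and pairs the initial block $\frac{1}{p+n}\sum_{i=0}^{p-1}f(T^ix)$ against the defect $\frac{p}{n(p+n)}\sum_{i=0}^{n-1}f(T^iy)$ --- both are $\frac{p}{p+n}$ times an average of $f$, hence differ by at most $\frac{p}{p+n}(B-A)$ --- and then bounds the $\Lambda_n\setminus\Lambda$ terms pairwise by $\frac{n-\sharp\Lambda}{p+n}(B-A)$. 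With that pairing no shift of $f$ and no sign bookkeeping are needed; I would recommend rewriting your argument along those lines, or else writing out the sign argument above in full.
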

\begin{proof}
	\[\begin{split}
		&|\frac{1}{p+n}\sum_{i=0}^{p+n-1}f(T^ix)-\frac{1}{n}\sum_{i=0}^{n-1}f(T^iy)|\\
		=&|\frac{1}{p+n}\sum_{i=0}^{p-1}f(T^ix)+\frac{1}{p+n}\sum_{i=0}^{n-1}f(T^{i+p}x)-\frac{1}{p+n}\sum_{i=0}^{n-1}f(T^iy)-\frac{p}{n(p+n)}\sum_{i=0}^{n-1}f(T^iy)|\\
		\leq &|\frac{1}{p+n}\sum_{i=0}^{p-1}f(T^ix)-\frac{p}{n(p+n)}\sum_{i=0}^{n-1}f(T^iy)|+|\frac{1}{p+n}\sum_{i=0}^{n-1}f(T^{i+p}x)-\frac{1}{p+n}\sum_{i=0}^{n-1}f(T^iy)|\\
		\leq &\frac{p}{p+n}(B-A)+\frac{1}{p+n}|\sum_{i=0}^{n-1}(f(T^{i+p}x)-f(T^iy))|\\
		\leq &\frac{p}{p+n}(B-A)+\frac{1}{p+n}|\sum_{i\in \Lambda_n\setminus \Lambda}(f(T^{i+p}x)-f(T^iy))|+\frac{1}{p+n}|\sum_{i\in \Lambda}(f(T^{i+p}x)-f(T^iy))|\\
		\leq &\frac{p+n-\sharp \Lambda}{p+n}(B-A)+\frac{1}{p+n}|\sum_{i\in\Lambda}(f(T^{i+p}x)-f(T^iy))|.
	\end{split}\]
\end{proof}
\begin{lemma}\label{lemma-4}
	Suppose that $(X,T)$ is a dynamical system. Let $V\subset\mathcal{M}(X,T)$ be a nonempty open set, and $\tilde{V}\subset V$ be a nonempty closed set, then there exist $\tau>0$ and $K\in\mathbb{N^+}$ such that if $\nu\in\mathcal{M}(X)$ satisfies $\ |\int f_kd\nu-\int f_kd\mu|<\tau$ for some $\mu\in\tilde{V}$ and any $1\leq k\leq K,$ then $\nu\in V.$
\end{lemma}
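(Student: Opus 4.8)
The plan is a routine argument combining the geometric decay of the series that defines $\rho$ with the compactness of $\tilde{V}$: a probability measure that agrees with $\mu$ on the first $K$ test functions $f_1,\dots,f_K$ up to an error $\tau$ is automatically $\rho$-close to $\mu$, and being $\rho$-close to the compact set $\tilde V$ forces membership in the open superset $V$.

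First I would extract the compactness input. Since $\tilde V$ is closed in the compact space $\mathcal{M}(X,T)$ it is compact, and $\mathcal{M}(X,T)\setminus V$ is closed in $\mathcal{M}(X,T)$, hence compact as well; these two sets being disjoint, the quantity
\[
\delta_0:=\inf\{\rho(\mu,\eta)\mid \mu\in\tilde V,\ \eta\in\mathcal{M}(X,T)\setminus V\}
\]
is strictly positive (put $\delta_0:=1$ if $V=\mathcal{M}(X,T)$). In particular, any $\eta\in\mathcal{M}(X,T)$ with $\rho(\eta,\mu)<\delta_0$ for some $\mu\in\tilde V$ already belongs to $V$.

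Next I would estimate the tail of $\rho$. For any $\nu\in\mathcal{M}(X)$, any $\mu\in\tilde V$ and any $K\in\mathbb{N}^{+}$,
\[
\rho(\nu,\mu)=\sum_{k=1}^{K}\frac{|\int f_k\,d\nu-\int f_k\,d\mu|}{2^{k}}+\sum_{k=K+1}^{\infty}\frac{|\int f_k\,d\nu-\int f_k\,d\mu|}{2^{k}}\le \sum_{k=1}^{K}\frac{|\int f_k\,d\nu-\int f_k\,d\mu|}{2^{k}}+2^{1-K},
\]
where the tail bound uses $\|f_k\|=1$, so each numerator is at most $2$, together with $\sum_{k>K}2^{-k}=2^{-K}$. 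Now choose $K\in\mathbb{N}^{+}$ with $2^{1-K}<\delta_0/2$ and set $\tau:=\delta_0/2$. If $\nu\in\mathcal{M}(X)$ satisfies $|\int f_k\,d\nu-\int f_k\,d\mu|<\tau$ for some $\mu\in\tilde V$ and all $1\le k\le K$, then the first sum is at most $\tau\sum_{k=1}^{K}2^{-k}<\tau=\delta_0/2$, so $\rho(\nu,\mu)<\delta_0$, and hence $\nu\in V$ by the previous step.

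I do not expect any genuine obstacle here: the whole content is this two-term estimate plus the compactness of $\tilde V$. The one point deserving a little care is precisely that compactness — if $\tilde V$ were only relatively closed in the open set $V$, rather than closed (equivalently, compact) in $\mathcal{M}(X,T)$, then $\delta_0$ could collapse to $0$ and no uniform pair $(\tau,K)$ would exist — and that the conclusion is read inside $\mathcal{M}(X,T)$, so in the assertion ``$\nu\in V$'' the measure $\nu$ is tacitly invariant, which is how the lemma is invoked later.
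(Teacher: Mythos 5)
Your proposal is correct and follows essentially the same route as the paper: the paper likewise first produces a $\tau>0$ with the $\rho$-neighborhood $\{\omega\mid \rho(\omega,\tilde V)\le\tau\}$ contained in $V$ (your $\delta_0$, obtained from the disjoint compact sets $\tilde V$ and $\mathcal{M}(X,T)\setminus V$), and then uses the same head-plus-tail estimate on the series defining $\rho$, bounding the tail by $\|f_k\|=1$. Your closing remark about $\nu$ being read inside $\mathcal{M}(X,T)$ is a fair observation about an imprecision that the paper's own proof shares.
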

\begin{proof}
	Since $\tilde{V}$ is closed and $V$ is open, there is $\tau>0$ such that $\tilde{V}_\tau:=\{\omega\in \mathcal{M}(X,T)\mid \rho(\omega,\tilde{V})\leq \tau\}\subset V.$ Take $K\in\mathbb{N^{+}}$ such that $\sum_{i=K+1}^{+\infty}\frac{1}{2^{i}}<\frac{\tau}{2},$ if $\nu\in\mathcal{M}(X)$ satisfies $\ |\int f_kd\nu-\int f_kd\mu|<\tau$ for some $\mu\in\tilde{V}$ and any $1\leq k\leq K,$ one has $\rho(\nu,\mu)=\sum_{i=1}^{+\infty}\frac{|\int f_id\nu-\int f_id\mu|}{2^{i+1}}\leq \sum_{i=1}^{K}\frac{\tau}{2^{i+1}}+\sum_{i=K+1}^{+\infty}\frac{1}{2^{i}}<\tau.$ So $\nu\in\tilde{V}_\tau\subset V.$
\end{proof}

\subsection{Proof of Theorem \ref{maintheorem-4}(1)}
Before giving the proof of Theorem \ref{maintheorem-4}(1), we give some definitions and lemmas.
\begin{definition}\label{def-1}
	Suppose that $(X,T)$ is a dynamical system. For any $f\in C(X)$, we say $f$ is $\kappa$-controlled for some $0< \kappa< 1$ if
	\begin{equation}
		\sup\limits_{\mu\in\mathcal{M}(X,T)}\int f\mathrm{d\mu}-\inf\limits_{\mu\in\mathcal{M}(X,T)}\int f\mathrm{d\mu}> \kappa(\sup\limits_{x\in X}f(x)-\inf\limits_{x\in X}f(x)).
	\end{equation}
	For any $0<\kappa<1$, we denote $C_{\kappa}(X):=\{f\in C(X) \mid f\text{ is }\kappa\text{-controlled }\}.$
\end{definition}
Next, we show some information about the set $C_\kappa(X)$.

\begin{lemma}{\label{lemma-9}}
	Suppose that $(X,T)$ is a dynamical system. Then for any $0<\kappa<1$,
	\begin{enumerate}
		\item if $(X,T)$ is uniquely ergodic, then $C_{\kappa}(X)=\emptyset$;
		\item if $(X,T)$ is not uniquely ergodic, then
		\begin{enumerate}
			\item if $T=id$, then every non-constant continuous function is in $C_{\kappa}(X)$;
			\item if $T\neq id$, then $C_{\kappa}(X)$ is a nonempty open set, but it is not dense in $C(X)$.
		\end{enumerate}
	\end{enumerate}
\end{lemma}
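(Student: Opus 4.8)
The plan is to unwind Definition \ref{def-1} in each case and reduce everything to elementary facts about the functional
\[
\Phi(f):=\Big(\sup_{\mu\in\mathcal{M}(X,T)}\int f\,d\mu-\inf_{\mu\in\mathcal{M}(X,T)}\int f\,d\mu\Big)-\kappa\Big(\sup_{x\in X}f(x)-\inf_{x\in X}f(x)\Big),
\]
so that $C_\kappa(X)=\{f\in C(X)\mid\Phi(f)>0\}$. Parts (1) and (2)(a) are then immediate. If $(X,T)$ is uniquely ergodic, the first bracket vanishes identically while the second is $\geq0$, so $\Phi\leq0$ everywhere and $C_\kappa(X)=\emptyset$. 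If $T=\mathrm{id}$, then $\mathcal{M}(X,T)=\mathcal{M}(X)$, hence $\sup_\mu\int f\,d\mu=\sup_xf(x)$ and $\inf_\mu\int f\,d\mu=\inf_xf(x)$ (extremized at Dirac masses), so $\Phi(f)=(1-\kappa)(\sup f-\inf f)>0$ for every non-constant $f$ because $\kappa<1$.

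Openness of $C_\kappa(X)$ (in fact in all cases) will follow from continuity of $\Phi$ on $(C(X),\|\cdot\|)$: each of the four maps $f\mapsto\sup_\mu\int f\,d\mu$, $f\mapsto\inf_\mu\int f\,d\mu$, $f\mapsto\sup_xf(x)$, $f\mapsto\inf_xf(x)$ is $1$-Lipschitz for the sup norm, using $|\int f\,d\mu-\int g\,d\mu|\leq\|f-g\|$ uniformly in $\mu$ and weak-$*$ compactness of $\mathcal{M}(X,T)$ so that sup/inf are attained. Hence $\Phi$ is continuous and $C_\kappa(X)=\Phi^{-1}((0,\infty))$ is open.

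For nonemptiness in (2)(b): since $(X,T)$ is not uniquely ergodic, the ergodic decomposition (or Krein--Milman on $\mathcal{M}(X,T)$) gives two distinct ergodic measures $\mu_1\neq\mu_2$, and distinct ergodic measures are mutually singular, so there is a Borel set $A$ with $\mu_1(A)=1$, $\mu_2(A)=0$. Given $\varepsilon>0$, inner regularity gives a compact $K\subset A$ with $\mu_1(K)>1-\varepsilon$ (whence $\mu_2(K)=0$), outer regularity gives an open $U\supset K$ with $\mu_2(U)<\varepsilon$, and Urysohn's lemma gives $f\in C(X)$ with $0\leq f\leq1$, $f\equiv1$ on $K$, $f\equiv0$ off $U$. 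Then $\int f\,d\mu_1\geq\mu_1(K)>1-\varepsilon$, $\int f\,d\mu_2\leq\mu_2(U)<\varepsilon$, and $\sup f-\inf f\leq1$, so $\Phi(f)>(1-2\varepsilon)-\kappa$; choosing $\varepsilon<\tfrac{1-\kappa}{2}$ makes this positive, so $f\in C_\kappa(X)$.

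For non-density when $T\neq\mathrm{id}$: pick $x_0$ with $Tx_0\neq x_0$ and, by Urysohn, $h\in C(X)$ with $h(x_0)=1$, $h(Tx_0)=0$; set $g:=h-h\circ T$. Every $T$-invariant $\mu$ gives $\int g\,d\mu=0$, while $g(x_0)=1$, so $g$ is non-constant with oscillation $c:=\sup g-\inf g>0$. For any $f$ with $\|f-g\|<\varepsilon$ we get $|\int f\,d\mu|<\varepsilon$ for every invariant $\mu$, so the first bracket of $\Phi(f)$ is $<2\varepsilon$, while $\sup f-\inf f>c-2\varepsilon$; choosing $\varepsilon$ with $2\varepsilon\leq\kappa(c-2\varepsilon)$ (e.g.\ $\varepsilon=\tfrac{\kappa c}{4(1+\kappa)}$) forces $\Phi(f)<0$, so the whole ball of radius $\varepsilon$ about $g$ avoids $C_\kappa(X)$. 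The only genuinely non-formal step is this nonemptiness construction, where mutual singularity of $\mu_1,\mu_2$ must be converted into a \emph{continuous} function with oscillation $\leq1$ but measure-gap close to $1$; the remaining arguments are routine Lipschitz estimates, with the one subtlety being, in the last step, to exhibit an open ball rather than a single function in the complement of $C_\kappa(X)$.
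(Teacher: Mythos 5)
Your proof is correct and follows essentially the same route as the paper's: Lipschitz continuity of the sup/inf functionals for openness, mutual singularity of two ergodic measures plus regularity and a continuous bump function for nonemptiness, and a non-constant coboundary $h-h\circ T$ (which integrates to zero against every invariant measure) to exhibit an open set disjoint from $C_\kappa(X)$. The only differences are cosmetic — you use Urysohn's lemma where the paper writes the explicit quotient $d(x,B_2)/(d(x,B_1)+d(x,B_2))$, and you produce the non-constant coboundary directly from a point with $Tx_0\neq x_0$ rather than by the paper's contradiction argument.
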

\begin{proof}
	(1) It is obvious since for any $f\in C(X)$, $\inf\limits_{\mu\in \mathcal{M}(X,T)}\int f\mathrm{d\mu}=\sup\limits_{\mu\in\mathcal{M}(X,T)}\int f\mathrm{d\mu}.$
	
	(2)(a) If $T=id$, then $\mathcal{M}(X,T)=\mathcal{M}(X)$. As a result, for any $f\in C(X)$, $$\sup\limits_{\mu\in\mathcal{M}(X,T)}\int f\mathrm{d\mu}-\inf\limits_{\mu\in\mathcal{M}(X,T)}\int f\mathrm{d\mu}=\sup\limits_{x\in X}f(x)-\inf\limits_{x\in X}f(x).$$ Then
	for any $0<\kappa<1$, every non-constant continuous function is in $C_{\kappa}(X)$.
	
	(b) if $T\neq id$, For any $f\in C(X)$, let $\varphi(f)=\sup\limits_{\mu\in M(X,T)}\int f\mathrm{d\mu}$ and $\psi(f)=\sup\limits_{x\in X}f(x)$, then $$ C_{\kappa}(X)=\{f\in C(X)\mid \varphi(f)+\varphi(-f)>\kappa(\psi(f)+\psi(-f))\}.$$ For any $f_1,f_2,f\in C(X)$, we have $\varphi(f_1+f_2)\leq\varphi(f_1)+\varphi(f_2)$, $\psi(f_1+f_2)\leq\psi(f_1)+\psi(f_2)$, $|\varphi(f)|\leq||f||$, $|\psi(f)|\leq||f||$. Therefore, $|\varphi(f_1)-\varphi(f_2)|\leq||f_1-f_2||$,$|\psi(f_1)-\psi(f_2)|\leq||f_1-f_2||$. By the continuity of $\varphi(f)$ and $\psi(f)$, we have $C_{\kappa}(X)$ is an open set. Let $$ D_{\kappa}(X)=\{f\in C(X)\mid\varphi(f)+\varphi(-f)<\kappa(\psi(f)+\psi(-f))\}.$$ 		
	By the continuity of $\varphi(f)$ and $\psi(f)$, we have $D_{\kappa}(X)$ is an open set. For any $f\in C(X)$, $\varphi(f-f\circ T)=0$. If for any $f\in C(X)$, $f-f\circ T$ is a constant function. By the compactness of $X$, for any $f\in C(X)$, $f=f\circ T$, therefore $T=id$, a contradiction. So, there is a $\tilde{f}\in C(X)$, $\tilde{f}-\tilde{f}\circ T$ is non-constant, then for any $0<\kappa<1$, $\tilde{f}-\tilde{f}\circ T\in D_{\kappa}(X)$. Therefore $ D_{\kappa}(X)$ is a nonempty open set, and $ C_{\kappa}(X)$ is not dense in $C(X)$. Since $(X,T)$ is not uniquely ergodic, there are $\mu_{1},\mu_{2}\in\cM^{e}(X,T)$, $A\in\cB$, such that $\mu_{1}(A)=\mu_{2}(X\setminus A)=1$. By the regularity of $\mu_{1}$ and $\mu_{2}$, there are closed sets $B_1\subset A$, $B_2\subset X\setminus A$, such that $\mu_{1}(B_1)>\frac{1+\kappa}{2}$, $\mu_{2}(B_2)>\frac{1+\kappa}{2}$. Let $f(x)=\frac{d(x,B_2)}{d(x,B_1)+d(x,B_2)}$, then $f\in C(X)$, $0\leq f\leq1$, $f|_{B_1}\equiv1$, $f|_{B_2}\equiv0$. So $\int f\mathrm{d\mu_{1}}\geq\int_{B_1}f\mathrm{d\mu_{1}}=\mu_{1}(B_1)>\frac{1+\kappa}{2}$, $\int f\mathrm{d\mu_{2}}=\int_{X\setminus B_2}f\mathrm{d\mu_{2}}\leq1-\mu_{2}(B_2)<\frac{1-\kappa}{2}$. Then $\int f\mathrm{d\mu_{1}}-\int f\mathrm{d\mu_{2}}>\kappa$, therefore $f\in C_{\kappa}(X)$ and $C_{\kappa}(X)\neq\emptyset$.      		
\end{proof}

    Due to this lemma, we have
\begin{corollary}{\label{coro-5}}
	Suppose that $(X,T)$ has $m$-$g$-product property with $g(\varepsilon,x,y,n)\leq n$ for any $\varepsilon>0$, any $x,y\in X$ and some $n(\varepsilon,x,y)\geq N(\varepsilon,x,y)$. If $(X,T)$ is not uniquely ergodic, then for any $0<\kappa<1$, $C_{\kappa}(X)$ is a nonempty open set, but it is not dense in $C(X)$.
\end{corollary}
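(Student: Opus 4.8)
The plan is to read the statement off Lemma \ref{lemma-9}. Since $(X,T)$ is assumed not uniquely ergodic, part (2) of that lemma applies and offers a dichotomy: either $T=\id$, so that every non-constant continuous function belongs to $C_\kappa(X)$ (and $C_\kappa(X)$ is dense), or $T\neq\id$, so that $C_\kappa(X)$ is a nonempty open, non-dense subset of $C(X)$. Because the conclusion we want is precisely the second alternative, the whole proof will amount to excluding the case $T=\id$, and it is exactly for this that the extra hypothesis on $g$ is used.

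To exclude $T=\id$, I would argue by contradiction. If $T=\id$, then non-unique ergodicity forces $X$ to contain at least two distinct points, so I would fix $x_1\neq x_2$ in $X$ and put $\varepsilon:=\tfrac{1}{2} d(x_1,x_2)>0$. The hypothesis provides some $n\geq N(\varepsilon,x_1,x_2)$ with $g(\varepsilon,x_1,x_2,n)<n$; for this $n$ the $m$-$g$-product property supplies $z\in X$, $p\in\mathbb{N}$ and $\Lambda\subseteq\Lambda_n$ with $|\Lambda_n\setminus\Lambda|\leq g(\varepsilon,x_1,x_2,n)<n=|\Lambda_n|$, hence $\Lambda\neq\emptyset$, and with $d(z,x_1)<\varepsilon$ and $d(T^{p+j}z,T^jx_2)<\varepsilon$ for all $j\in\Lambda$. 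Taking any $j\in\Lambda$ and using $T=\id$, the second estimate reads $d(z,x_2)<\varepsilon$, so the triangle inequality gives $d(x_1,x_2)\leq d(x_1,z)+d(z,x_2)<2\varepsilon=d(x_1,x_2)$, a contradiction. Thus $T\neq\id$, and Lemma \ref{lemma-9}(2)(b) then yields the corollary.

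The hard part --- indeed the only point that is not completely formal --- will be making sure the hypothesis $g(\varepsilon,x_1,x_2,n)<n$ is genuinely invoked, since it is precisely what guarantees $\Lambda\neq\emptyset$ and hence forces the tracking inequality linking $x_1$ and $x_2$. Without it (for instance if one only knew $g\equiv n$, so that $\Lambda$ could be empty) the identity map on a finite set would satisfy the $m$-$g$-product property and the statement would be false; everything else is the single triangle-inequality line above together with the already-established Lemma \ref{lemma-9}.
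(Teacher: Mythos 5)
Your proposal is correct and follows essentially the same route as the paper, which simply notes that non-unique ergodicity gives two points and asserts that "it can be easily checked that $T\neq\id$" before invoking Lemma \ref{lemma-9}(2)(b). You supply the one detail the paper omits --- the triangle-inequality argument ruling out $T=\id$ via the $m$-$g$-product property --- and you correctly observe that the hypothesis on $g$ must be read as strict (guaranteeing $\Lambda\neq\emptyset$) for this step, and hence the corollary, to work.
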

\begin{proof}
	Since that $(X,T)$ is not uniquely ergodic, it has at least two points. It can be easily checked that $T\neq id$.
\end{proof}

In fact, we can consider a broader definition of $m$-$g$-product property as following.
\begin{definition}
	Suppose that $(X,T)$ is a dynamical system, $Y\subset X$ is a nonempty $T$-invariant Borel set. We say $(X,T)$ has $m$-$g$-product property from $X$ to $Y$, if for any $\varepsilon>0$, $x_1\in X$, $x_2\in Y$, there exists $N(\varepsilon,x_1,x_2)\in\mathbb{N^{+}}$ and for any positive integer $n\geq N$, there exist $z\in X,$ $p\in\mathbb{N},$ $\Lambda\subseteq \Lambda_n$, such that $0\leq p\leq m(\varepsilon,x_1,x_2,n),$ $|\Lambda_n\setminus \Lambda|\leq g(\varepsilon,x_1,x_2,n),$ and
	$$d(z,x_1)<\varepsilon,\ d(T^{p+j}z,T^jx_2)<\varepsilon\ \text{for any}\ j\in \Lambda.$$
	In particular, when $Y=X$, we say $(X,T)$ has $m$-$g$-product property. If $(X,T)$ has $m$-$g$-product property and is transitive, we say $(X,T)$ is $m$-$g$-transitive. If $m\equiv0$ or $g\equiv0$, we will omit $m$ or $g$.
\end{definition}
\begin{lemma}{\label{lemma-10}}
	Suppose that $(X,T)$ is a dynamical system satisfying $m$-$g$-product  property from $X$ to $Y$. Let $f\in C(X)$.
	If there are $y,y'\in Y$ such that $$\liminf\limits_{n\to+\infty}\frac{m(\varepsilon,x,z,n)+g(\varepsilon,x,z,n)}{m(\varepsilon,x,z,n)+n}<\kappa< \frac{1}{2}$$ for any $\varepsilon>0,$ any $x\in X$, any $z\in\{y,y'\}$ and  $$\lim_{n\to+\infty}\frac{1}{n}\sum_{i=0}^{n-1}f(T^iy')-\lim_{n\to+\infty}\frac{1}{n}\sum_{i=0}^{n-1}f(T^iy)>2\kappa(\sup\limits_{x\in X}f(x)-\inf\limits_{x\in X}f(x)),$$ then $f\in\mathcal{R}$.
\end{lemma}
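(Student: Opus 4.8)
The plan is to produce two reals $\alpha<\beta$ for which $\overline{E}_\beta^f$ and $\underline{E}_\alpha^f$ are both dense in $X$; then $f\in\mathcal{S}_{\alpha,\beta}\subseteq\mathcal{S}\subseteq\mathcal{R}$ by Proposition~\ref{prop-4}(3.1). Write $B=\sup_{x\in X}f(x)$, $A=\inf_{x\in X}f(x)$, $a=\lim_{n\to+\infty}\frac1n\sum_{i=0}^{n-1}f(T^iy)$ and $b=\lim_{n\to+\infty}\frac1n\sum_{i=0}^{n-1}f(T^iy')$. The hypothesis is exactly $b-a>2\kappa(B-A)$, so $\beta:=b-\kappa(B-A)$ and $\alpha:=a+\kappa(B-A)$ satisfy $\alpha<\beta$; these are the targets.

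First I would show $\overline{E}_\beta^f$ is dense (indeed residual). By the computation in the proof of Proposition~\ref{prop-4}(3.1), $\overline{E}_\beta^f=\bigcap_{k\in\mathbb{N^{+}}}\bigcap_{N\in\mathbb{N^{+}}}\bigcup_{n\geq N}\{x\in X\mid \frac1n\sum_{i=0}^{n-1}f(T^ix)>\beta-\frac1k\}$, and each inner union is open because $x\mapsto\frac1n\sum_{i=0}^{n-1}f(T^ix)$ is continuous for fixed $n$; by the Baire category theorem it suffices to show each such open set is dense. Fix $k,N\in\mathbb{N^{+}}$ and a nonempty open $U\subseteq X$, choose $x_1\in U$, and using uniform continuity of $f$ pick $\varepsilon>0$ with $B(x_1,\varepsilon)\subseteq U$ and $|f(u)-f(v)|<\frac1{2k}$ whenever $d(u,v)<\varepsilon$. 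Since $\liminf_{n}\frac{m(\varepsilon,x_1,y',n)+g(\varepsilon,x_1,y',n)}{m(\varepsilon,x_1,y',n)+n}<\kappa$, infinitely many $n$ make this ratio $<\kappa$; pick such an $n'$ that in addition satisfies $n'\geq N(\varepsilon,x_1,y')$, $n'\geq N$ and $\frac1{n'}\sum_{i=0}^{n'-1}f(T^iy')>b-\frac1{2k}$. Applying the $m$-$g$-product property from $X$ to $Y$ to the pair $(x_1,y')$ at time $n'$ yields $z\in X$, $p\leq m:=m(\varepsilon,x_1,y',n')$, and $\Lambda\subseteq\Lambda_{n'}$ with $|\Lambda_{n'}\setminus\Lambda|\leq g:=g(\varepsilon,x_1,y',n')$, $d(z,x_1)<\varepsilon$, and $d(T^{p+j}z,T^jy')<\varepsilon$ for all $j\in\Lambda$; in particular $z\in U$. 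By Lemma~\ref{lemma-5} (the case $p=0$ being a direct estimate),
$$\frac1{p+n'}\sum_{i=0}^{p+n'-1}f(T^iz)\;\geq\;\frac1{n'}\sum_{i=0}^{n'-1}f(T^iy')\;-\;\frac{p+n'-\sharp\Lambda}{p+n'}(B-A)\;-\;\frac1{p+n'}\Big|\sum_{i\in\Lambda}\big(f(T^{i+p}z)-f(T^iy')\big)\Big|.$$
Now $\sharp\Lambda\geq n'-g$, so $p+n'-\sharp\Lambda\leq p+g$; since $t\mapsto\frac{t+g}{t+n'}$ is nondecreasing (as $g\leq n'$) and $p\leq m$, the middle term is at most $\frac{m+g}{m+n'}(B-A)<\kappa(B-A)$, and the last term is at most $\frac{\sharp\Lambda}{p+n'}\cdot\frac1{2k}\leq\frac1{2k}$ by the choice of $\varepsilon$. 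Hence $\frac1{p+n'}\sum_{i=0}^{p+n'-1}f(T^iz)>(b-\frac1{2k})-\kappa(B-A)-\frac1{2k}=\beta-\frac1k$, and since $p+n'\geq n'\geq N$ the point $z\in U$ lies in $\bigcup_{n\geq N}\{x\mid\frac1n\sum_{i=0}^{n-1}f(T^ix)>\beta-\frac1k\}$; density follows.

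The same argument with $y$ in place of $y'$ and the other inequality in Lemma~\ref{lemma-5} shows that $\underline{E}_\alpha^f=\bigcap_{k}\bigcap_{N}\bigcup_{n\geq N}\{x\mid \frac1n\sum_{i=0}^{n-1}f(T^ix)<\alpha+\frac1k\}$ is dense in $X$. Therefore $f\in\mathcal{S}_{\alpha,\beta}$, and then $f\in\mathcal{S}\subseteq\mathcal{D}^*=\mathcal{R}^*\subseteq\mathcal{R}$ by Proposition~\ref{prop-4}(3.1), which is the assertion. The one point needing care is the bookkeeping $\frac{p+n'-\sharp\Lambda}{p+n'}\leq\frac{m+g}{m+n'}<\kappa$: this is where $p\leq m$, $|\Lambda_{n'}\setminus\Lambda|\leq g$ and the $\liminf$ hypothesis combine, and it is what makes the use of $b-a>2\kappa(B-A)$ tight. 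Everything else is routine uniform-continuity estimation, so I expect no genuine obstruction.
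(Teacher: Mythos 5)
Your proof is correct and follows essentially the same route as the paper: both arguments use uniform continuity of $f$, the $\liminf$ hypothesis to select times $n$ with $\frac{m+g}{m+n}<\kappa$, the $m$-$g$-product property to produce the point $z$, and Lemma \ref{lemma-5} for the key estimate $\frac{p+n-\sharp\Lambda}{p+n}(B-A)\leq\frac{m+g}{m+n}(B-A)<\kappa(B-A)$. The only cosmetic difference is that you package the conclusion via $\mathcal{S}_{\alpha,\beta}$ and Proposition \ref{prop-4}(3.1) with the clean thresholds $a+\kappa(B-A)<b-\kappa(B-A)$, whereas the paper defines the open sets $U_n,V_n$ around $\frac{\alpha+\beta}{2}\mp\tau$ inline (introducing auxiliary constants $\tilde{\kappa},\tau$) and applies Baire directly.
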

\begin{proof}
	Denote $\beta=\lim\limits_{n\to+\infty}\frac{1}{n}\sum_{i=0}^{n-1}f(T^iy'),$ $\alpha=\lim\limits_{n\to+\infty}\frac{1}{n}\sum_{i=0}^{n-1}f(T^iy)$ and $B=\sup\limits_{x\in X}f(x),$ $A=\inf\limits_{x\in X}f(x).$ Then $B>A$ and $\frac{\beta-\alpha}{B-A}>2\kappa,$ we can take $\tilde{\kappa}\in\mathbb{R}$ such that $\frac{\beta-\alpha}{B-A}>2\tilde{\kappa}>2\kappa$. Take $\tau\in\mathbb{R}$ such that $0<\tau<(1-\frac{\kappa}{\tilde{\kappa}})\frac{\beta-\alpha}{4}$. For any $n\in\mathbb{N^{+}}$, let
	$$U_{n}=\{x\in X \mid\exists N\geq n,\frac{1}{N}\sum_{i=0}^{N-1}f(T^{i}x)<\frac{\alpha+\beta}{2}-\tau\},$$
	$$V_{n}=\{x\in X \mid\exists N\geq n,\frac{1}{N}\sum_{i=0}^{N-1}f(T^{i}x)>\frac{\alpha+\beta}{2}+\tau\}.$$
	Then $U_{n}$ and $V_{n}$ are open sets and $I(f,T)\supset\bigcap\limits_{n=1}^{+\infty}(U_{n}\cap V_{n}).$ By Baire category theorem, we only need to prove that $U_n$ and $V_n$ are dense in $X.$ Next, we give the proof of the denseness of $U_n.$ The denseness of $V_n$ can be proved in the same way.
	For any $x_0\in X,$ $\delta_1>0$ and $l\in\mathbb{N^{+}}$, we will show that $U_l\cap B(x_0,\delta_1)\neq\emptyset.$ Due to the compactness of $X$, we can find a constant $\delta_2>0,$ such that for any $x_1,x_2\in X$ satisfying
	$d(x_1,x_2)<\delta_2,$ one has $|f(x_1)-f(x_2)|<\frac{\tau}{2}.$ Let $\varepsilon=\min\{\delta_1,\delta_2\}$.

	Since $\liminf\limits_{n\to+\infty}\frac{m(\varepsilon,x_0,y,n)+g(\varepsilon,x_0,y,n)}{m(\varepsilon,x_0,y,n)+n}<\kappa,$ there exist $\{n_l\}_{l=1}^{+\infty}\subset \mathbb{N}$ and $M\in\mathbb{N}^+$ such that for any $n_l\geq M,$ one has
	\begin{equation}\label{equation-1}
		n_l\geq N(\varepsilon,x_0,y),\ n_l\geq l,\ \frac{m(\varepsilon,x_0,y,n_l)+g(\varepsilon,x_0,y,n_l)}{m(\varepsilon,x_0,y,n_l)+n_l}<\kappa.
	\end{equation}
	Since $\alpha=\lim\limits_{n\to+\infty}\frac{1}{n}\sum_{i=0}^{n-1}f(T^iy)$, there exists $n_l\geq M$ such that $|\frac{1}{n_l}\sum_{i=0}^{n_l-1}f(T^iy)-\alpha|<\frac{\tau}{2}.$
	Then by $m$-$g$-product property of $(X,T),$ there is $z\in X$, $0\leq p\leq m(\varepsilon,x_0,y,n_l)$, $\Lambda\subseteq\Lambda_{n_l}$ such that $$|\Lambda_{n_l}\setminus\Lambda|\leq g(\varepsilon,x_0,y,n_l),\ d(z,x_0)<\varepsilon\leq\delta_1,\ d(T^{p+j}z,T^jy)<\varepsilon\leq\delta_2\ \text{for any}\ j\in \Lambda.$$
	Thereby $|f(T^{p+j}z)-f(T^jy)|<\frac{\tau}{2}$, $\forall j\in \Lambda$. Then one has
	\[
	\begin{split}
		\frac{1}{p+n_l}\sum_{i=0}^{p+n_l-1}f(T^iz)-\alpha&=\frac{1}{p+n_l}\sum_{i=0}^{p+n_l-1}f(T^iz)-\frac{1}{n}\sum_{i=0}^{n_l-1}f(T^iy)+\frac{1}{n_l}\sum_{i=0}^{n_l-1}f(T^iy)-\alpha\\
		&\leq\frac{p+n_l-\sharp \Lambda}{p+n_l}(B-A)+\frac{1}{p+n_l}|\sum_{i\in\Lambda}(f(T^{i+p}z)-f(T^iy))|+\frac{\tau}{2}\\
		&\leq\frac{m+g}{m+n_l}(B-A)+\frac{\sharp \Lambda}{p+n_l}\frac{\tau}{2}+\frac{\tau}{2}\\
		&\leq\kappa\frac{\beta-\alpha}{2\tilde{\kappa}}+\tau<\frac{\beta-\alpha}{2}-\tau\\
	\end{split}
	\]
	by Lemma \ref{lemma-5}, (\ref{equation-1}) and $0<\tau<(1-\frac{\kappa}{\tilde{\kappa}})\frac{\beta-\alpha}{4}$. So one has $z\in U_l\cap B(x_0,\delta_1).$ As a result, $U_l$ is dense in $X$.
\end{proof}

Now, we prove Theorem \ref{maintheorem-4}(1).

\noindent\textbf{Proof of Theorem \ref{maintheorem-4}(1)}:
By Corollary \ref{coro-5}, $C_{2\kappa}(X)$ is a nonempty open set. For any $f\in C_{2\kappa}(X),$ one has $\sup\limits_{\mu\in\mathcal{M}^e(X,T)}\int f\mathrm{d\mu}-\inf\limits_{\mu\in\mathcal{M}^e(X,T)}\int f\mathrm{d\mu}=\sup\limits_{\mu\in\mathcal{M}(X,T)}\int f\mathrm{d\mu}-\inf\limits_{\mu\in\mathcal{M}(X,T)}\int f\mathrm{d\mu}> 2\kappa(\sup\limits_{x\in X}f(x)-\inf\limits_{x\in X}f(x))$ by the ergodic decomposition theorem. Since $\mathcal{M}^e(X,T)$ is compact, there exist $\mu_{1},\mu_{2}\in \mathcal{M}^e(X,T)$ such that $\int f\mathrm{d\mu_1}=\sup\limits_{\mu\in\mathcal{M}^e(X,T)}\int f\mathrm{d\mu}$ and $\int f\mathrm{d\mu_2}=\inf\limits_{\mu\in\mathcal{M}^e(X,T)}\int f\mathrm{d\mu}.$ By the Birkhoff ergodic theorem, there exist $y_1,y_2\in X$ such that $\lim\limits_{n\to+\infty}\frac{1}{n}\sum_{i=0}^{n-1}f(T^iy_1)=\int f\mathrm{d\mu_1}$ and $\lim\limits_{n\to+\infty}\frac{1}{n}\sum_{i=0}^{n-1}f(T^iy_2)=\int f\mathrm{d\mu_2}.$ Then one has $\lim\limits_{n\to+\infty}\frac{1}{n}\sum_{i=0}^{n-1}f(T^iy_1)-\lim\limits_{n\to+\infty}\frac{1}{n}\sum_{i=0}^{n-1}f(T^iy_2)>2\kappa(\sup\limits_{x\in X}f(x)-\inf\limits_{x\in X}f(x)).$ Thus $f\in\mathcal{R}$ by Lemma \ref{lemma-10}. According to Lemma \ref{lemma-7},  we have $I(f,T)\cap Trans(X,T)\neq\emptyset$. Take $x_0\in I(f,T)\cap Trans(X,T).$ Then $\sharp V_T(x_0)>1$ which implies $\sharp\mathcal{M}_{Trans}>1.$ Since $\mathcal{M}_{Trans}=\mathcal{M}_{dense}^*$ by Proposition \ref{prop-3}, we obtain $\sharp \mathcal{M}_{dense}^*>1.$   \qed

\subsection{Proof of Theorem \ref{maintheorem-4}(2)}
Before giving the proof of Theorem \ref{maintheorem-4}(2), we give some lemmas.

\begin{lemma}\label{lemma-11}
	For any nonempty open set $V\subset\mathcal{M}(X,T)$, let $$U(N_0,V):=\bigcup_{N>N_{0}}\left\{x \in X \mid \frac{1}{N} \sum_{j=0}^{N-1} \delta_{T^{j} x} \in V\right\}.$$
	If $(X,T)$ satisfies $m$-$g$-product property with $\liminf\limits_{n\to+\infty}\frac{m(\varepsilon,x,y,n)+g(\varepsilon,x,y,n)}{n}=0$ for any $\varepsilon>0,$ any $x,y\in X,$ and there is $y'\in X$ such that $ V_T(y')\subset V$, then $U(N_0,V)$ is an open and dense set in $X$.
\end{lemma}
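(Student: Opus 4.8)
The openness of $U(N_0,V)$ is exactly Lemma \ref{lemma-13}, so the only thing to prove is that $U(N_0,V)$ is dense in $X$. The plan is: fix an arbitrary $x_0\in X$ and $\delta>0$, and construct $z\in B(x_0,\delta)$ together with some $N>N_0$ for which $\frac1N\sum_{i=0}^{N-1}\delta_{T^iz}\in V$. Since $V_T(y')$ is a nonempty compact subset of the open set $V$, I would first invoke Lemma \ref{lemma-4} with $\tilde V=V_T(y')$ to obtain $\tau>0$ and $K\in\mathbb{N}^{+}$ such that any $\nu\in\mathcal{M}(X)$ satisfying $|\int f_k\,d\nu-\int f_k\,d\mu|<\tau$ for some $\mu\in V_T(y')$ and all $1\le k\le K$ already lies in $V$. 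Then, using uniform continuity of the finitely many functions $f_1,\dots,f_K$ on the compact space $X$, I would fix $\varepsilon\in(0,\delta]$ small enough that $d(a,b)<\varepsilon$ implies $|f_k(a)-f_k(b)|<\tau/4$ for every $1\le k\le K$.

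Next I would choose the scale $n$. Because $\liminf_{n\to\infty}\frac{m(\varepsilon,x_0,y',n)+g(\varepsilon,x_0,y',n)}{n}=0$, and because the empirical measures $\frac1n\sum_{j=0}^{n-1}\delta_{T^jy'}$ accumulate precisely on the compact set $V_T(y')$ (so $\rho(\frac1n\sum_{j=0}^{n-1}\delta_{T^jy'},V_T(y'))\to0$), there is an integer $n$ with $n\ge N(\varepsilon,x_0,y')$, $n>N_0$, with $\frac{2(m(\varepsilon,x_0,y',n)+g(\varepsilon,x_0,y',n))}{n}<\tau/4$, and with some $\mu_n\in V_T(y')$ satisfying $|\int f_k\,d(\frac1n\sum_{j=0}^{n-1}\delta_{T^jy'})-\int f_k\,d\mu_n|<\tau/4$ for all $1\le k\le K$. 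Applying the $m$-$g$-product property to $x_1=x_0$, $x_2=y'$, to $\varepsilon$, and to this $n$ then yields $z\in X$, $p\in\mathbb{N}$ with $0\le p\le m(\varepsilon,x_0,y',n)$, and $\Lambda\subseteq\Lambda_n$ with $|\Lambda_n\setminus\Lambda|\le g(\varepsilon,x_0,y',n)$, such that $d(z,x_0)<\varepsilon\le\delta$ and $d(T^{p+j}z,T^jy')<\varepsilon$ for all $j\in\Lambda$. Set $N=p+n>N_0$.

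It then remains to compare $\nu_z:=\frac1N\sum_{i=0}^{N-1}\delta_{T^iz}$ with $\mu_n$ on each $f_k$. Feeding $x=z$, $y=y'$, $f=f_k$ into Lemma \ref{lemma-5} and using $p+n-\sharp\Lambda\le p+|\Lambda_n\setminus\Lambda|\le m(\varepsilon,x_0,y',n)+g(\varepsilon,x_0,y',n)$, $\|f_k\|\le1$, and the choice of $\varepsilon$, one gets
\[
\left|\int f_k\,d\nu_z-\int f_k\,d\Big(\tfrac{1}{n}\sum_{j=0}^{n-1}\delta_{T^jy'}\Big)\right|\le\frac{2\big(m(\varepsilon,x_0,y',n)+g(\varepsilon,x_0,y',n)\big)}{n}+\frac{\tau}{4}<\frac{\tau}{2}.
\]
Combining with $|\int f_k\,d(\frac1n\sum_{j=0}^{n-1}\delta_{T^jy'})-\int f_k\,d\mu_n|<\tau/4$ gives $|\int f_k\,d\nu_z-\int f_k\,d\mu_n|<\tau$ for all $1\le k\le K$, whence $\nu_z\in V$ by Lemma \ref{lemma-4}. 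Thus $z\in U(N_0,V)\cap B(x_0,\delta)$, and since $x_0$ and $\delta$ were arbitrary, $U(N_0,V)$ is dense in $X$.

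The computations are routine once the parameters are fixed; the one place where the hypothesis $\liminf_{n\to\infty}\frac{m+g}{n}=0$ is essential is the selection of $n$, which must be taken along a subsequence realizing this liminf while being simultaneously large enough both to push $\frac1n\sum_{j=0}^{n-1}\delta_{T^jy'}$ close to $V_T(y')$ and to exceed $N(\varepsilon,x_0,y')$ and $N_0$. Checking that these requirements are mutually compatible — they are, since each only asks $n$ to be large — is the main and essentially only delicate point.
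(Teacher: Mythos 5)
Your proposal is correct and follows essentially the same route as the paper's proof: Lemma \ref{lemma-4} applied with $\tilde V=V_T(y')$ to get $\tau$ and $K$, uniform continuity of $f_1,\dots,f_K$ to fix $\varepsilon$, selection of $n$ along the subsequence realizing $\liminf\frac{m+g}{n}=0$ while simultaneously meeting the largeness requirements, and the triangle inequality via Lemma \ref{lemma-5}. The only differences are cosmetic bookkeeping of the constants (you use $\|f_k\|\le 1$ where the paper carries $B_k-A_k$ explicitly), so nothing further is needed.
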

\begin{proof}
	It is obvious that $U(N_0,V)$ is an open set, we only need to proof that $U(N_0,V)$ is dense in $X$.
	For any $x_0\in X$, $\delta_1>0$, $N_0\in\mathbb{N^+},$ we will show that $U(N_0,V)\cap B(x_0,\delta_1)\neq\emptyset$ where $B(x_0,\delta_1):=\{x\in X\mid d(x,x_0)<\delta_1\}.$
	By Lemma \ref{lemma-4} there are $\tau>0$ and $K\in\mathbb{N^+}$ such that if $\nu\in\mathcal{M}(X)$ satisfies $\ |\int f_kd\nu-\int f_kd\mu|<\tau$ for some $\mu\in V_T(y')$ and any $1\leq k\leq K,$ then $\nu\in V.$
	Due to the compactness of $X$, we can find a constant $\delta_2>0,$ such that for any $x_1,x_2\in X$ satisfying
	$d(x_1,x_2)<\delta_2,$ one has $|f_k(x_1)-f_k(x_2)|<\frac{\tau}{4}$ for any $1\leq k\leq K$. Let $\varepsilon=\min\{\delta_1,\delta_2\}$. Since $\liminf\limits_{n\to+\infty}\frac{m(\varepsilon,x_0,y',n)+g(\varepsilon,x_0,y',n)}{n}=0,$ There exist $\{n_l\}_{l=1}^{+\infty}\subset \mathbb{N}$ and $M\in\mathbb{N^{+}}$ such that for any $n_l\geq M$, we have
	\begin{equation}\label{equation-2}
		\ n_l\geq N(\varepsilon,x_0,y'),\ n_l> N_0,\ \frac{g(\varepsilon,x_0,y',n_l)}{n_l}(B_k-A_k)< \frac{\tau}{4}\ \text{and}\ \frac{m(\varepsilon,x_0,y',n_l)}{n_l}(B_k-A_k)< \frac{\tau}{4},
	\end{equation}
	for any $1\leq k\leq K,$ where $B_k=\sup\limits_{x\in X}f_k(x),$ $A_k=\inf\limits_{x\in X}f_k(x).$
	Then there exist $\mu\in V_T(y')$ and $n_l\geq M$ such that $|\frac{1}{n_l}\sum\limits_{i=0}^{n_l-1}f_k(T^iy')-\alpha_k|<\frac{\tau}{4}$ for any $1\leq k\leq K$, where $\alpha_k=\int f_k\mathrm{d\mu}$.
	By $m$-$g$-product property of $(X,T),$ there is $z\in X$, $p\in\mathbb{N}^{+},$ $\Lambda\subseteq \Lambda_{n_l}:=\{0,\cdots,n_l-1\}$ such that $0\leq p\leq m,$ $|\Lambda_{n_l}\setminus \Lambda|\leq g,$ and $$d(z,x_0)<\varepsilon\leq\delta_1,\ d(T^{p+j}z,T^jy')<\varepsilon\leq\delta_2\ \text{for any}\ j\in \Lambda.$$
	Thereby $|f_k(T^{p+j}z)-f_k(T^jy')|<\frac{\tau}{4}$, $\forall j\in \Lambda$, $\forall 1\leq k\leq K$.
	Then for any $1\leq k\leq K$, by Lemma \ref{lemma-5} we have
	\[
	\begin{split}
		|\frac{1}{p+n_l}\sum_{i=0}^{p+n_l-1}f_k(T^iz)-\alpha_k|\leq &\frac{p+n_l-\sharp \Lambda}{p+n_l}(B_k-A_k)+\frac{1}{p+n_l}|\sum_{i\in\Lambda}(f(T^{i+p}z)-f(T^iy'))|\\
		&+|\frac{1}{n_l}\sum\limits_{i=0}^{n_l-1}f_k(T^iy')-\alpha_k|\\
		\leq & \frac{m}{m+n_l}(B_k-A_k)+\frac{g}{n_l}(B_k-A_k)+\frac{\sharp \Lambda}{p+n_l}\frac{\tau}{4}+\frac{\tau}{4}\\
		\leq & \frac{\tau}{4}+\frac{\tau}{4}+\frac{\tau}{4}+\frac{\tau}{4}=\tau
	\end{split}
	\]
	by (\ref{equation-2}). So one has $\frac{1}{p+n_l}\sum\limits_{i=0}^{p+n_l-1}\delta_{T^iz}\in V$ and $U(N_0,V)\cap B(x_0,\delta_1)\neq\emptyset$. As a result, $U(N_0,V)$ is dense in $X$.
\end{proof}	
\begin{lemma}\label{lemma-12}
	Suppose that $(X,T)$ is a dynamical system. 
	If $(X,T)$ satisfies $m$-$g$-product property with $\liminf\limits_{n\to+\infty}\frac{m(\varepsilon,x,y,n)+g(\varepsilon,x,y,n)}{n}=0$ for any $\varepsilon>0,$ any $x,y\in X$, then $G^{\{\mu\in\mathcal{M}(X,T)\mid G_\mu\neq\emptyset\}}$ is residual in $X;$
\end{lemma}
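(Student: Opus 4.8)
The plan is to reduce the statement to the already established fact that $G^{\mathcal{M}_{dense}^*}$ is residual (Corollary \ref{coro-1}). Writing $K:=\{\mu\in\mathcal{M}(X,T)\mid G_\mu\neq\emptyset\}$, I would show $K\subseteq\mathcal{M}_{dense}^*$: once this is known, any $x$ with $\mathcal{M}_{dense}^*\subseteq V_T(x)$ also satisfies $K\subseteq V_T(x)$, so $G^{\mathcal{M}_{dense}^*}\subseteq G^{K}$, and hence $G^{K}$ is residual. (If $K=\emptyset$ then $G^{K}=X$ and there is nothing to prove.) By the definition of $\mathcal{M}_{dense}^*$, the inclusion $K\subseteq\mathcal{M}_{dense}^*$ amounts to showing that for every $\mu\in K$ and every $\varepsilon>0$ the set $G^{\mu}_{\varepsilon}=\{x\in X\mid\rho(\mu,V_T(x))\le\varepsilon\}$ is dense in $X$; since $G^{\mu}\subseteq G^{\mu}_{\varepsilon}$, it is in fact enough to prove the stronger claim that $G^{\mu}=\{x\in X\mid\mu\in V_T(x)\}$ is residual in $X$ for every $\mu\in K$.

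So fix $\mu\in K$ and pick $y'\in G_\mu$, i.e. $V_T(y')=\{\mu\}$. The key observation is the identity
\[
G^{\mu}=\bigcap_{m\ge1}\ \bigcap_{N_0\ge0}U\!\left(N_0,\mathcal{B}(\mu,\tfrac1m)\right),
\]
which holds because $\mu\in V_T(x)$ precisely when the empirical measures $\frac1N\sum_{j=0}^{N-1}\delta_{T^{j}x}$ lie in $\mathcal{B}(\mu,\tfrac1m)$ for infinitely many $N$, for every $m$. Each $U(N_0,\mathcal{B}(\mu,\tfrac1m))$ is open by Lemma \ref{lemma-13} (continuity of $x\mapsto\frac1N\sum_{j=0}^{N-1}\delta_{T^{j}x}$), and it is dense in $X$ by Lemma \ref{lemma-11}: the hypothesis $\liminf_{n\to\infty}\frac{m(\varepsilon,x,y,n)+g(\varepsilon,x,y,n)}{n}=0$ is exactly the one required there, and we apply it with $V=\mathcal{B}(\mu,\tfrac1m)$ and the point $y'$, which satisfies $V_T(y')=\{\mu\}\subseteq V$. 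Since $X$ is a compact metric space, the Baire category theorem gives that $G^{\mu}$, being a countable intersection of open dense sets, is residual in $X$.

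Putting the two steps together, every $\mu\in K$ lies in $\mathcal{M}_{dense}^*$, so $K\subseteq\mathcal{M}_{dense}^*$, and then $G^{K}\supseteq G^{\mathcal{M}_{dense}^*}$ is residual by Corollary \ref{coro-1}, which is the claim. (Alternatively one can avoid $\mathcal{M}_{dense}^*$: from $G^{\mu}\subseteq\{x\in X\mid V_T(x)\cap\mathcal{B}(\mu,\varepsilon)\neq\emptyset\}$ and the residuality of $G^{\mu}$ just proved, the density hypothesis of Theorem \ref{thm-1}(2) holds for $K$, giving $G^{\overline{K}}$ residual, and $G^{\overline{K}}=G^{K}$ by Lemma \ref{lemma-1}.) In either route the only real difficulty is the density of the sets $U(N_0,\mathcal{B}(\mu,\tfrac1m))$, which is precisely the content of Lemma \ref{lemma-11}; granting that, everything else is routine bookkeeping with Baire category and the definitions, so I expect no further obstacle.
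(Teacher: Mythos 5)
Your proposal is correct and follows essentially the same route as the paper: the key step in both is the application of Lemma \ref{lemma-11} to open balls $V$ containing $V_T(y')=\{\mu\}$ for a generic point $y'\in G_\mu$, followed by a Baire-category argument. The only difference is organizational --- you first deduce that each $G^{\mu}$ with $G_\mu\neq\emptyset$ is residual and then invoke Corollary \ref{coro-1} (equivalently Theorem \ref{thm-1}(2) together with Lemma \ref{lemma-1}), whereas the paper repeats the covering-balls construction of Theorem \ref{thm-1}(2) inline; both are valid.
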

\begin{proof}
	Let $K=\overline{\{\mu\in\mathcal{M}(X,T)\mid G_\mu\neq\emptyset\}},$ then $K\subset \cM(X,T)$ is a nonempty compact set. We can find open balls $\{V_i\}_{i\in\mathbb{N^+}}$ and $\{U_i\}_{i\in\mathbb{N^+}}$ in $\cM(X)$ such that
	\begin{enumerate}[(a)]
		\item $V_i\subset\overline{V}_i\subset U_i$;
		\item $diam(U_i)\longrightarrow0$;
		\item $V_i\cap K\neq\emptyset$;
		\item each point of $K$ lies in infinitely many $V_i$.
	\end{enumerate}
	Put $P(U_i):=\{x\in X\mid V_T(x)\cap U_i\neq\emptyset\}$, then $\bigcap\limits_{i=1}^{+\infty}P(U_i)= G^{\overline{\{\mu\in\mathcal{M}(X,T)\mid G_\mu\neq\emptyset\}}},$  and $$
	P\left(U_{i}\right) \supset \bigcap_{N_{0}=1}^{+\infty} \bigcup_{N>N_{0}}\left\{x \in X \mid \frac{1}{N} \sum_{j=0}^{N-1} \delta_{T^{j} x} \in V_{i}\right\}
	.$$ Since $U(N_0,V_i)$ is an open set by Lemma \ref{lemma-13}, by Baire category theorem if we have
	$$\forall N_0\in\mathbb{N^+},\ \forall i\in\mathbb{N^+},\ U(N_0,V_i)\text{ is dense in }X
	,$$
	then $G^{\overline{\{\mu\in\mathcal{M}(X,T)\mid G_\mu\neq\emptyset\}}}$ is residual in $X.$
	Since $V_i$ is open, there exists $\mu\in\cM(X,T)\cap V_i$ such that $G_{\mu}\neq\emptyset.$ Take $y\in G_\mu$, one has $V_T(y)=\{\mu\}\subset V_i.$ Then
	by Lemma \ref{lemma-11}, $U(N_0,V_i)$ is an open and dense set in $X$. So $G^{\{\mu\in\mathcal{M}(X,T)\mid G_\mu\neq\emptyset\}}$ is residual in $X,$ since $G^{\{\mu\in\mathcal{M}(X,T)\mid G_\mu\neq\emptyset\}}= G^{\overline{\{\mu\in\mathcal{M}(X,T)\mid G_\mu\neq\emptyset\}}}$ by Lemma \ref{lemma-1}.
\end{proof}

Now, we prove Theorem \ref{maintheorem-4}(2).

\noindent\textbf{Proof of Theorem \ref{maintheorem-4}(2)}:
By the Birkhoff ergodic theorem, we have $\mathcal{M}^e(X,T)\subset \{\mu\in\mathcal{M}(X,T)\mid G_\mu\neq\emptyset\}.$ Then for any $\mu\in  \mathcal{M}^e(X,T),$ $G^{\{\mu\in\mathcal{M}(X,T)\mid G_\mu\neq\emptyset\}}\subset G^{\mu}.$ Since $G^{\{\mu\in\mathcal{M}(X,T)\mid G_\mu\neq\emptyset\}}$ is residual in $X$ by Lemma \ref{lemma-12}, $G^{\mu}$ is also residual in $X.$ So $\mu\in \mathcal{M}_{dense}^*$ and thus $\mathcal{M}^{e}(X,T)\subset\mathcal{M}_{dense}^*.$\qed

\subsection{Proof of Theorem \ref{maintheorem-4}(3)}
Before giving the proof of Theorem \ref{maintheorem-4}(3), we give some lemmas.

From the proof of \cite[Lemma 3.4]{HTW2019}, we obtain the following lemma.
\begin{lemma}\label{lemma-AA}
	Suppose that $(X,T)$ is a dynamical system satisfying approximate product property. Then ergodic measures supported
	on minimal sets are dense in $\cM(X,T)$.
\end{lemma}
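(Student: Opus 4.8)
The plan is to prove directly that for each $\mu\in\cM(X,T)$ and each $\varepsilon>0$ there is an ergodic measure $\omega$ with $\supp\omega$ a minimal set and $\rho(\omega,\mu)<\varepsilon$; density is then immediate.

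First I would put $\mu$ in a convenient form. Since $\cM(X,T)$ is the closed convex hull of $\cM^{e}(X,T)$ and $\cM^{e}(X,T)\neq\emptyset$, one may choose ergodic measures $\nu_1,\dots,\nu_k$ and positive rationals $p_i=a_i/b$ with $\sum_i p_i=1$ and $\rho(\sum_i p_i\nu_i,\mu)<\varepsilon/4$; by Birkhoff's theorem each $\nu_i$ has a generic point $x_i$. Next, after fixing small auxiliary constants $\varepsilon',\delta_1,\delta_2>0$, let $N=N(\varepsilon',\delta_1,\delta_2)$ be the constant from the approximate product property, pick $n\geq N$ so large that $\rho(\frac1n\sum_{j=0}^{n-1}\delta_{T^jx_i},\nu_i)<\varepsilon'$ for all $i$, and apply the approximate product property to the $b$-periodic sequence obtained by listing $x_1$ $a_1$ times, then $x_2$ $a_2$ times, \dots, then $x_k$ $a_k$ times, and then repeating. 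This produces $z\in X$ and times $h_1=0\le h_2\le\cdots$ with $n\le h_{i+1}-h_i\le n(1+\delta_2)$ such that on each block $[h_i,h_i+n-1]$ at most $\delta_1 n$ indices $j$ fail $d(T^{h_i+j}z,T^jx_{(i)})<\varepsilon'$, where $x_{(i)}$ is the $i$-th entry of the listed sequence.

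The heart of the argument is the claim that every $T$-invariant measure carried by the orbit closure of $z$ is $\varepsilon$-close to $\mu$. I would establish this through a uniform window estimate: there is $L_0$ such that for every $a\in\N$ and every $L\ge L_0$ one has $\rho(\frac1L\sum_{m=a}^{a+L-1}\delta_{T^m z},\mu)<\varepsilon$. Such a window contains $q\ge L/(n(1+\delta_2))-2$ complete consecutive blocks, whose bases run through the $b$-periodic list and therefore realize each $x_i$ with relative frequency $p_i$ up to $O(b/q)$; inside these blocks at most a $\delta_1$-fraction of indices is uncontrolled, while the gaps together with the two boundary blocks occupy a fraction $\le\delta_2+O(n/L)$ of $\{a,\dots,a+L-1\}$. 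On the complementary good set the orbit of $z$ is $\varepsilon'$-shadowing the orbit pieces of the $x_i$ in proportions close to $p_i$, so the good part of the window measure is, up to a displacement of at most $\varepsilon'$ at each point mass, a near-combination (with weights close to $p_i$) of the $n$-step empirical measures of the $x_i$, which are in turn $\varepsilon'$-close to the $\nu_i$. Using that $\rho$ is controlled both by a small uniform displacement and by a small amount of misplaced mass (cf. Lemma~\ref{lemma-4}), these contributions sum to $\rho(\frac1L\sum_{m=a}^{a+L-1}\delta_{T^m z},\sum_i p_i\nu_i)\le C(\varepsilon'+\delta_1+\delta_2)+o(1)$ as $L\to\infty$, for some constant $C$; choosing $\varepsilon',\delta_1,\delta_2$ small (relative to $\varepsilon$ and the moduli of continuity of finitely many of the $f_i$) and $L_0$ large makes the left-hand side $<3\varepsilon/4$, hence $<\varepsilon$ from $\mu$. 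Finally, any $\lambda\in\cM(\omega_T(z),T)$ is a weak$^*$ limit of Birkhoff averages of points of $\omega_T(z)\subseteq\overline{orb(z)}$, each of which is itself a weak$^*$ limit of windows of $orb(z)$ of arbitrarily large length; the window estimate then forces $\rho(\lambda,\mu)\le\varepsilon$.

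To conclude, pick by Zorn's lemma a minimal set $M\subseteq\omega_T(z)$ and an extreme point $\omega$ of the nonempty compact convex set $\cM(M,T|_M)$; then $\omega$ is ergodic and, $M$ being minimal, $\supp\omega=M$, while by the previous paragraph $\rho(\omega,\mu)\le\varepsilon$, which is what we wanted. I expect the genuinely delicate point to be the uniform window estimate: one has to arrange the input of the approximate product property so that every sufficiently long window of $orb(z)$, no matter where it begins, sees the ergodic ingredients $\nu_i$ in the correct proportions, and then turn the block/gap/error bookkeeping into a bound in the metric $\rho$ that is uniform in the window position. Everything else is soft. (Alternatively one may follow \cite{HTW2019} more literally, feeding a single orbit segment at a time to the approximate product property and performing the reduction beforehand, but the convex-combination route above keeps the construction self-contained.)
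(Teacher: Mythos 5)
Your argument is correct in outline, and it is worth noting that the paper itself offers no proof of this lemma at all: it simply asserts that the statement follows ``from the proof of \cite[Lemma 3.4]{HTW2019}''. What you have written is, in effect, the self-contained argument that the citation stands in for, and it follows the expected route: approximate $\mu$ by a rational convex combination $\sum_i p_i\nu_i$ of ergodic measures, glue long orbit segments of generic points $x_i$ in a $b$-periodic pattern via the approximate product property, prove a window estimate for the resulting point $z$ that is uniform in the starting position of the window, and then extract a minimal set $M\subseteq\omega_T(z)$ together with an ergodic $\omega\in\cM(M,T|_M)$, which automatically has $\supp\omega=M$ and is forced by the window estimate to lie within $\varepsilon$ of $\mu$. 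The block/gap/error bookkeeping you defer is routine given Lemma \ref{lemma-4} and the periodicity of the input sequence (any $q$ consecutive blocks see each $x_i$ with frequency $p_i+O(b/q)$), so I see no genuine gap. One small imprecision: your claim that \emph{any} $\lambda\in\cM(\omega_T(z),T)$ is a weak$^*$ limit of Birkhoff averages of a single point of $\omega_T(z)$ is false for non-ergodic $\lambda$; you should either restrict the claim to ergodic measures (which is all you use, since $\omega$ is taken to be an extreme point of $\cM(M,T|_M)$ and hence has a generic point in $M$), or pass through the ergodic decomposition and the convexity of closed $\rho$-balls. With that wording fixed, the proof is complete and has the added virtue of making the paper's appeal to \cite{HTW2019} unnecessary.
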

Next, we show that almost periodic points is dense in measure center $C_T(X)$ when $(X,T)$ satisfies approximate product property. We recall the definition of almost periodic point. A point $x \in X$ is almost periodic, if for any open neighborhood $U$ of $x$, there exists $N \in \mathbb{N}$ such that $f^k(x) \in U$ for some $k \in [n, n+N]$ for every $n \in \mathbb{N}$. It is well-known that $x$ is almost periodic, if and only if  $\overline{orb(x,T)}$ is a minimal set.
\begin{lemma}\label{measure-center}
	Suppose that $(X,T)$ is a dynamical system satisfying approximate product property. Then $C_T(X)=\overline{AP}.$
\end{lemma}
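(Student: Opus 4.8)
The plan is to prove $C_T(X)=\overline{AP}$, where $AP$ denotes the set of almost periodic points, by a double inclusion. The inclusion $\overline{AP}\subseteq C_T(X)$ is the easy direction and holds for every dynamical system: if $x\in AP$, then $\overline{orb(x,T)}$ is a minimal set, which carries an invariant measure $\mu$, and since $\mu$ is supported on $\overline{orb(x,T)}$ we get $x\in\overline{orb(x,T)}=S_\mu\subseteq C_T(X)$ when $\mu$ has full support on the minimal set (in general $S_\mu$ is a closed invariant subset of the minimal set, hence equals it). Thus $AP\subseteq C_T(X)$, and since $C_T(X)$ is closed, $\overline{AP}\subseteq C_T(X)$. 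This uses no hypothesis on $(X,T)$.

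For the reverse inclusion $C_T(X)\subseteq\overline{AP}$ we use the approximate product property. By definition $C_T(X)=\overline{\bigcup_{\mu\in\mathcal{M}(X,T)}S_\mu}$, so it suffices to show that for every $\mu\in\mathcal{M}(X,T)$ we have $S_\mu\subseteq\overline{AP}$; since $\overline{AP}$ is closed this then gives $C_T(X)\subseteq\overline{AP}$. Fix $\mu\in\mathcal{M}(X,T)$, a point $x\in S_\mu$, and $\varepsilon>0$; I want to produce an almost periodic point within $\varepsilon$ of $x$. By Lemma \ref{lemma-AA}, ergodic measures supported on minimal sets are dense in $\mathcal{M}(X,T)$, so choose such a measure $\nu$, supported on a minimal set $Z$, with $\rho(\mu,\nu)$ small. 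The idea is then to run the approximate product property with a single reference point: take $x_1=x$ and all later $x_i$ equal to a generic point $w\in Z$ for $\nu$ (so that $\frac1n\sum_{i=0}^{n-1}\delta_{T^iw}\to\nu$), obtaining from $ApPP$ a point $z$ that $\varepsilon$-shadows $x$ on the first block of length $n$ and thereafter $(\varepsilon,\delta_1)$-shadows long orbit segments of $w$. Choosing $\varepsilon,\delta_1,\delta_2$ appropriately and $n$ large forces $d(z,x)<\varepsilon$ while $V_T(z)$ is concentrated near $\nu$; more precisely, one arranges that $\overline{orb(z,T)}$ essentially sits inside a small neighborhood of $Z$.

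The cleaner route, which I would actually write out, replaces "concentrated near $\nu$" by a direct construction of an almost periodic point. Using $ApPP$ one shadows $x$ on an initial segment and then, on the tail, shadows longer and longer orbit segments of points of the minimal set $Z$ with shrinking error; a standard diagonal/limiting argument (as in the construction of generic points under specification-like hypotheses, cf. the use of $ApPP$ in Remark \ref{rem-1}(1)(b) and in Lemma \ref{lemma-12}) produces a point $z$ with $d(z,x)<\varepsilon$ whose $\omega$-limit set is contained in $Z$; since $Z$ is minimal, any point of $\omega_T(z)$ is almost periodic and lies within $\varepsilon$ (plus the diameter-control built into the shadowing) of $x$. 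Letting $\varepsilon\to 0$ shows $x\in\overline{AP}$, hence $S_\mu\subseteq\overline{AP}$, hence $C_T(X)\subseteq\overline{AP}$.

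The main obstacle is the bookkeeping in the $ApPP$ construction: one must choose the shadowing precision, the density-of-errors parameter $\delta_1$, the gap parameter $\delta_2$, and the block lengths so that (i) the $\varepsilon$-ball around $x$ is respected on the initial block, and (ii) the resulting point has $\omega$-limit set genuinely inside (a controlled neighborhood of, then exactly) the minimal set $Z$ rather than merely having empirical measures near $\nu$ — the latter would only give a point whose orbit closure meets $Z$, not an almost periodic point close to $x$. Getting the $\omega$-limit set to land \emph{in} the minimal set requires that the "bad" coordinates (where shadowing fails) have vanishing asymptotic density \emph{and} that the good segments exhaust increasingly long pieces of $Z$-orbits, which is exactly what iterating $ApPP$ with $\delta_1\to 0$ and block lengths $\to\infty$ delivers; making this precise is the technical heart of the argument.
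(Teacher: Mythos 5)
Your easy inclusion $\overline{AP}\subseteq C_T(X)$ is correct and is the same as the paper's. The reverse inclusion, however, has a genuine gap. Even granting the entire ApPP construction, what you produce is a point $z$ with $d(z,x)<\varepsilon$ whose $\omega$-limit set lies in the minimal set $Z=S_\nu$. The point $z$ itself is not almost periodic (its orbit closure contains $z$, which need not lie in any minimal set), and the almost periodic points you actually obtain, namely the points of $\omega_T(z)\subseteq Z$, lie in $Z$, and there is no reason for them to be within $\varepsilon$ of $x$: weak$^*$ closeness of $\nu$ to $\mu$ says nothing about the location of the support $S_\nu$ relative to the particular point $x\in S_\mu$. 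So the claim that ``any point of $\omega_T(z)$ \dots lies within $\varepsilon$ \dots of $x$'' is false, and the argument does not show $x\in\overline{AP}$. (A secondary problem: ApPP only controls all but a positive proportion of coordinates at each stage, so the orbit of $z$ may visit arbitrary points along an infinite sparse set of times and one cannot in general force $\omega_T(z)\subseteq Z$; but this is moot given the main gap.)

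The paper closes the reverse inclusion with a short measure-theoretic step that you are missing, and which requires no orbit gluing beyond Lemma \ref{lemma-AA}. By that lemma choose $\mu_i\to\mu$ with each $\mu_i$ ergodic and $S_{\mu_i}$ minimal, and set $F=\overline{\bigcup_{i}S_{\mu_i}}\subseteq\overline{AP}$. Since $F$ is closed and $\mu_i(F)=1$ for every $i$, the Portmanteau theorem gives $\mu(F)\geq\limsup_{i}\mu_i(F)=1$, hence $S_\mu\subseteq F\subseteq\overline{AP}$, and taking the union over $\mu\in\mathcal{M}(X,T)$ and closing up yields $C_T(X)\subseteq\overline{AP}$. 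Note that it is the union over the whole approximating sequence that covers $S_\mu$; no single minimal set need come close to a given $x\in S_\mu$, which is why an argument aimed at steering the orbit toward one fixed minimal set $Z$ cannot succeed.
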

\begin{proof}
	For any $x\in AP$ and $\mu\in \mathcal{M}(\overline{\text{orb}(x,T)},T),$ we have $x\in S_\mu.$ Then $\overline{AP}\subset C_T(X).$ To see the reverse direction, by Lemma \ref{lemma-AA}, ergodic measures supported
	on minimal sets are dense in $\mathcal{M}(X,T).$ Thus for any $\mu\in\mathcal{M}(X,T),$ there exists $\{\mu_i\}_{i=1}^{\infty}\subset\mathcal{M}(X,T)$ such that $S_{\mu_i}$ is minimal for any $i\in\mathbb{N^{+}}$ and $\lim_{i\to \infty}\mu_i=\mu$ which means $\mu(\overline{\cup_{i=1}^{\infty}S_{\mu_i}})\geq \limsup_{j\to\infty}\mu_j(\overline{\cup_{i=1}^{\infty}S_{\mu_i}})=1.$ In other words, $S_\mu\subset \overline{\cup_{i=1}^{\infty}S_{\mu_i}}\subset\overline{AP}.$  So $C_T(X)=\overline{AP}.$
\end{proof}
\begin{theorem}\label{thm-3}
	Suppose that $(X,T)$ is a dynamical system satisfying approximate product property. If $C_T(X)=X,$ then $(X,T)$ is $m$-$g$-transitive with $\lim\limits_{n\to+\infty}\frac{m(\varepsilon,x,y,n)+g(\varepsilon,x,y,n)}{n}=0$ for any $\varepsilon>0,$ any $x,y\in X.$
\end{theorem}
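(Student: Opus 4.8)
The plan is to combine the approximate product property with the density of almost periodic points, which is available here because $C_T(X)=X$ forces $\overline{AP}=X$ by Lemma \ref{measure-center}. Transitivity is the easier half: a standard stage-by-stage gluing, carried out through the approximate product property, of initial orbit segments of an enumeration of a countable dense subset of $AP$ produces a point with dense orbit (almost periodicity of the gluing points is what lets one arrange that the constructed orbit actually comes near each of them, not merely near their orbits). So the real content is the $m$-$g$-product property with $m(\varepsilon,x,y,n)+g(\varepsilon,x,y,n)=o(n)$, and I will describe how to extract it.

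Fix $\varepsilon>0$ and $x_1,x_2\in X$. First I would pick an almost periodic point $q_1$ with $d(q_1,x_1)<\varepsilon/4$; since $\overline{orb(q_1,T)}$ is minimal, the return set $\{r:d(T^rq_1,q_1)<\varepsilon/4\}$ is syndetic, say with gap bound $L=L(\varepsilon,x_1)$. Then, for each large $n$, I would choose a block length $n_0=n_0(n)$ with $n_0(n)=o(n)$, a per-block error fraction $\delta_1(n)\to 0$ (eventually below $\tfrac{1}{2(L+1)}$), and a block-spacing slack $\delta_2(n)$ with $n_0(n)\delta_2(n)<1$, all chosen so that the approximate product property applies at precision $\varepsilon/2$ with these $\delta_1,\delta_2$ and block length $n_0$, i.e. $n_0(n)\ge N(\varepsilon/2,\delta_1(n),\delta_2(n))$.

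With these parameters fixed, apply the approximate product property to the sequence $q_1,x_2,T^{n_0}x_2,T^{2n_0}x_2,\dots$ (the first coordinate is $q_1$ and the $k$-th is $T^{(k-2)n_0}x_2$ for $k\ge 2$). Since $n_0(1+\delta_2)<n_0+1$, every gap $h_{k+1}-h_k$ is forced to equal $n_0$, so the $k$-th block occupies exactly $[(k-1)n_0,kn_0)$; hence the resulting point $z'$ satisfies, for $k\ge 2$, the estimate $d(T^{(k-1)n_0+j}z',T^{(k-2)n_0+j}x_2)<\varepsilon/2$ for all $j$ outside a set $B_k\subseteq\{0,\dots,n_0-1\}$ with $|B_k|\le\delta_1 n_0$. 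Writing $t=(k-2)n_0+j$, this is exactly $d(T^{n_0+t}z',T^tx_2)<\varepsilon/2$ for every $t\ge 0$ with $t\bmod n_0\notin B_{2+\lfloor t/n_0\rfloor}$, and there are at most $(\lceil n/n_0\rceil+1)\delta_1 n_0\le \delta_1 n+2\delta_1 n_0$ exceptional values of $t$ in $\{0,\dots,n-1\}$. It remains to make the base point genuinely close to $x_1$: since $B_1$ omits all but at most $\delta_1 n_0<n_0/(2(L+1))$ of the indices $\{0,\dots,n_0-1\}$ while the return times of $q_1$ have gaps at most $L$, there is $j_1\notin B_1$ with $d(T^{j_1}q_1,q_1)<\varepsilon/4$, whence $d(T^{j_1}z',x_1)\le d(T^{j_1}z',T^{j_1}q_1)+d(T^{j_1}q_1,q_1)+d(q_1,x_1)<\varepsilon$. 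Now set $z:=T^{j_1}z'$ and $p:=n_0-j_1$; then $T^{p+t}z=T^{n_0+t}z'$, so with $\Lambda=\{t\in\Lambda_n: t\bmod n_0\notin B_{2+\lfloor t/n_0\rfloor}\}$ one has $d(z,x_1)<\varepsilon$, $d(T^{p+j}z,T^jx_2)<\varepsilon$ for $j\in\Lambda$, $0\le p\le n_0$, and $|\Lambda_n\setminus\Lambda|\le\delta_1 n+2\delta_1 n_0$. Taking $m=n_0$ and $g=\delta_1 n+2\delta_1 n_0$ gives $\tfrac{m+g}{n}\le\tfrac{n_0}{n}+3\delta_1\to 0$, which is the asserted asymptotics; together with transitivity this gives that $(X,T)$ is $m$-$g$-transitive.

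The step I expect to be the main obstacle is the simultaneous parameter choice in the second paragraph: one must arrange $n_0(n)=o(n)$, $\delta_1(n)\to 0$ and $\delta_2(n)<1/n_0(n)$ while keeping $n_0(n)\ge N(\varepsilon/2,\delta_1(n),\delta_2(n))$, which requires sending $\delta_1,\delta_2$ to $0$ slowly enough and using that $N$ is finite for each fixed triple. The point of the tiny slack $\delta_2$ is precisely to force the block gaps to equal $n_0$, killing the index drift that the lossy gluing of the approximate product property would otherwise inject when shadowing a length-$n$ segment of the orbit of $x_2$ --- without it one would only shadow a reparametrisation of that segment, which the $m$-$g$-product property does not allow. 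The secondary subtlety, that the property requires $z$ itself (not merely some iterate of the gluing point $z'$) to lie within $\varepsilon$ of $x_1$, is exactly what the almost periodic point $q_1$ and its syndetic recurrence are introduced to handle.
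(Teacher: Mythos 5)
Your overall strategy (reduce to the approximate product property plus the density of almost periodic points, and use the syndetic recurrence of an almost periodic point $q_1$ near $x_1$ to convert ``close to the orbit of $q_1$ at some time'' into ``close to $x_1$ itself'') is the same as the paper's, and that part of your argument is sound. The genuine gap sits exactly at the step you flag as the main obstacle, and it is not a removable technicality: you need simultaneously $n_0\geq N(\varepsilon/2,\delta_1,\delta_2)$ and $\delta_2<1/n_0$, hence $\delta_2<1/N(\varepsilon/2,\delta_1,\delta_2)$, and the finiteness of $N$ for each fixed triple does not give this. The approximate product property only asserts that \emph{some} gap in $[n,n(1+\delta_2)]$ works, so for systems with a congruence obstruction that interval must be long enough to contain an admissible residue, which forces $N(\varepsilon,\delta_1,\delta_2)\geq c/\delta_2$. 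A concrete instance satisfying all hypotheses of the theorem: $X=\{0,1\}^{\mathbb{Z}}\times\mathbb{Z}/2\mathbb{Z}$ with $T=\sigma\times(+1)$. Periodic points are dense, so $C_T(X)=X$, and the system has the approximate product property; but for $\varepsilon,\delta_1<1$ every admissible gap must be even (a wrong parity makes \emph{every} index of a block an $\varepsilon$-error), so for odd $n$ one needs $n\delta_2\geq 1$, i.e.\ $N(\varepsilon,\delta_1,\delta_2)\geq 1/\delta_2$. Then $n_0\geq N$ forces $n_0\delta_2\geq 1$ and your exact-gap mechanism is unavailable. Without exact gaps your multi-block gluing only shadows a reparametrisation of the orbit segment of $x_2$ (as you yourself note), and the drift accumulated over the $\approx n/n_0$ gaps cannot be absorbed into a single offset $p$, so the construction does not yield the $m$-$g$-product property.

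The paper sidesteps this entirely by gluing only \emph{two} blocks, each of the full length $n$: the first shadows the almost periodic point $y$ near $x_1$, the second shadows $x_2,Tx_2,\dots,T^{n-1}x_2$ in one piece, with $\delta_1=\delta_2=1/2^k$ where $N_k\leq n<N_{k+1}$. There is then a single gap, whose slack $p\leq n/2^k$ is simply absorbed into the waiting time $m$ of the $m$-$g$-product property (which tolerates any $m=o(n)$); the syndetic return times of $y$ produce a time $q_n$ within $O(L_0 n/2^k)$ of the end of the first block at which the glued point is $\varepsilon$-close to $x_1$, and the leftover tail of the first block is likewise absorbed into $m$. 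If you replace your many short blocks by this two-block scheme, the rest of your argument (including the recurrence step, which is essentially identical to the paper's) goes through.
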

\begin{proof}
	For any $\varepsilon>0$ and $x_1,x_2\in X,$ by Lemma \ref{measure-center} there is $y\in AP\cap B(x_1,\frac{\varepsilon}{3})$ and $L_0\in\mathbb{N}^+$ such that for any $l\geq 1$, there is $p\in \left[ l,l+L_{0}-1\right] $ such that $f^{p}(y)\in B(y,\frac{\varepsilon}{3}).$ For any $k\in\mathbb{N^{+}},$ let $N_k=N(\frac{\varepsilon}{3},\frac{1}{2^k},\frac{1}{2^k})$ be defined in the definition of approximate product property. We assume that $N_k<N_{k+1}$ for any $k\in\mathbb{N^{+}}.$ Take $k_0\in\mathbb{N^{+}}$ such that $2^{k_0-1}\leq L_0< 2^{k_0}$ and let $N(\varepsilon,x_1,x_2)=\max\{\frac{L_0+2}{1-\frac{L_0}{2^{k_0}}},N_{k_0}\}.$ Then for any $n\geq N(\varepsilon,x_1,x_2),$ there exists $k\geq k_0$ such that $N_k\leq n<N_{k+1}.$ By approximate product property, there is $z_n\in X$ and $p\in\mathbb{N}$ such that $1\leq p\leq \frac{n}{2^k}$ and
	\begin{equation}\label{equation-AB}
		\#\{0\leq j\leq n-1\mid d(T^{j}z_n,T^jy)\geq\frac{\varepsilon}{3}\}\leq\frac{n}{2^k},
	\end{equation}
	and
	\begin{equation}
		\#\{0\leq j\leq n-1\mid d(T^{n+p+j}z_n,T^jx_2)\geq\frac{\varepsilon}{3}\}\leq\frac{n}{2^k}.
	\end{equation}
	Let $\tilde{n}=[L_0(\frac{n}{2^k}+1)]+1.$ Then one has
	\begin{equation}\label{equation-AJ}
		\sharp\{n-\tilde{n}\leq q \leq n-1:d(T^{q}(y),y) <\frac{\varepsilon}{3}\}\geq [\frac{\tilde{n}}{L_{0}}]>[\frac{n}{2^k}+1]>\frac{n}{2^k},
	\end{equation}
	and
	\begin{equation*}
		n-\tilde{n}>n-L_0(\frac{n}{2^k}+1)-2\geq n-L_0(\frac{n}{2^{k_0}}+1)-2\geq 0
	\end{equation*}
	which implies $\tilde{n}\leq n.$
	Together with (\ref{equation-AB}) we get that there is $q_n\in \left [ n-\tilde{n},n-1\right ]$ such that
	\begin{equation}
		d(T^{q_{n}}z_n,T^{q_{n}}y)<\frac{\varepsilon}{3} \ \mathrm{and} \ d(y,T^{q_n}y)<\frac{\varepsilon}{3},
	\end{equation}
	which implies $d(T^{q_n}z_n,x_1)<\varepsilon$. Let $z=T^{q_n}z_n,$ one has
	\begin{equation}
		d(z,x_1)<\varepsilon\ \text{and}\ \#\{0\leq j\leq n-1\mid d(T^{n-q_n+p+j}z,T^jx_2)\geq\frac{\varepsilon}{3}\}\leq\frac{n}{2^k}.
	\end{equation}
	Let $m(\varepsilon,x,y,n)=[L_0(\frac{n}{2^k}+1)]+2+[\frac{n}{2^k}]$ and $g(\varepsilon,x,y,n)=[\frac{n}{2^k}]+1.$ Then $(X,T)$ has $m$-$g$-product property and $\lim\limits_{n\to+\infty}\frac{m(\varepsilon,x,y,n)+g(\varepsilon,x,y,n)}{n}=0$ for any $\varepsilon>0,$ any $x,y\in X.$
	
	By Lemma \ref{measure-center}, $C_T(X)=\overline{AP}=X.$ Since $\{\mu\in\cM^{e}(X,T)\mid S_\mu=C_T(X)\}$ is residual in $\cM(X,T)$ by Remark \ref{rem-1}(1)(b), there is an ergodic measure with full support which implies $(X,T)$ is transitive. So $(X,T)$ is $m$-$g$-transitive.
\end{proof}

Now, we prove Theorem \ref{maintheorem-4}(3).

\noindent\textbf{Proof of Theorem \ref{maintheorem-4}(3)}:
Let $K=\mathcal{M}(X,T),$ then $K$ is a nonempty compact set. We can find open balls $\{V_i\}_{i\in\mathbb{N^+}}$ and $\{U_i\}_{i\in\mathbb{N^+}}$ in $\cM(X)$ such that
\begin{enumerate}[(a)]
	\item $V_i\subset\overline{V}_i\subset U_i$;
	\item $diam(U_i)\longrightarrow0$;
	\item $V_i\cap K\neq\emptyset$;
	\item each point of $K$ lies in infinitely many $V_i$.
\end{enumerate}
Put $P(U_i):=\{x\in X\mid V_T(x)\cap U_i\neq\emptyset\}$, then $\bigcap\limits_{i=1}^{+\infty}P(U_i)= G^{\mathcal{M}(X,T)},$  and $$
P\left(U_{i}\right) \supset \bigcap_{N_{0}=1}^{+\infty} \bigcup_{N>N_{0}}\left\{x \in X \mid \frac{1}{N} \sum_{j=0}^{N-1} \delta_{T^{j} x} \in V_{i}\right\}
.$$ Since $U(N_0,V_i)$ is an open set by Lemma \ref{lemma-13}, by Baire category theorem if we have
$$\forall N_0\in\mathbb{N^+},\ \forall i\in\mathbb{N^+},\ U(N_0,V_i)\text{ is dense in }X
,$$
then $G^{\mathcal{M}(X,T)}$ is residual in $X.$
By Remark \ref{rem-1}(1)(b), $\{\mu\in\cM^{e}(X,T)\mid S_\mu=X\}$ is  residual in $\cM(X,T).$
Since $V_i$ is open and $\{\mu\in\cM^{e}(X,T)\mid S_\mu=X\}$ is residual in $\cM(X,T),$ there exists $\omega\in\cM^{e}(X,T)\cap V_i$ such that $S_\omega=X.$ Then by $\omega\in\cM^{e}(X,T),$  one has $\omega(G_\omega)=1$ which implies $G_\omega$ is dense in $X.$ It's easy to see that $G_\omega\subset U(N_0,V_i)$ which implies $U(N_0,V_i)$ is dense in $X.$ Thus $G^{\mathcal{M}(X,T)}$ is residual in $X.$ Then for any $\mu\in \mathcal{M}(X,T),$ $G^{\mu}$ is also residual in $X$ by $G^{\mathcal{M}(X,T)}\subset G^{\mu}.$ This implies $\mu\in \mathcal{M}_{dense}^*,$ and thus $\mathcal{M}_{dense}^*=\mathcal{M}(X,T).$ Combining with Theorem \ref{thm-3}, we complete the proof of Theorem \ref{maintheorem-4}(3).  \qed

\subsection{Proof of Theorem \ref{maintheorem-4}(4)}
Before giving the proof of Theorem \ref{maintheorem-4}(4), we need some definitions.
\begin{definition}
	A set $A=\{a_1<a_2<\cdots\}\subset\mathbb{N}$ is said to be syndetic if there is $N>0$, such that for any $k\in\mathbb{N^+}$, $a_{k+1}-a_k\leq N$.
\end{definition}

Denote $\mathcal{F}_s$ the set of all syndetic sets. It is well known that for any $x\in X$, $x$ is a minimal point if and only if that $N(x,U)\in\mathcal{F}_s$ for any neighbourhood $U$ of $x$, where $N(x,U):=\{n\in\mathbb{N}\mid T^n(x)\in U\}$.

Now, we prove Theorem \ref{maintheorem-4}(4).

\noindent\textbf{Proof of Theorem \ref{maintheorem-4}(4)}:
	For any $\varepsilon>0$, since $X$ is a compact metric space, there are $x_1,x_2,\cdots,x_k\in X$ such that $\bigcup\limits_{i=1}^{k}B(x_i,\frac{\varepsilon}{2})=X$, where $B(x_i,\frac{\varepsilon}{2}):=\{\tilde{y}\in X\mid d(x_i,\tilde{y})<\frac{\varepsilon}{2}\}$. Then for any $1\leq i\leq k$, $N(x_i,B(x_i,\frac{\varepsilon}{2}))\in\mathcal{F}_s$, Assume that $N(x_i,B(x_i,\frac{\varepsilon}{2}))=\{a_{i,1}<a_{i,2}<\cdots\}$, then there is $N>0$, such that $$\forall1\leq i\leq k,\ \forall j\in\mathbb{N^{+}},\ a_{i,j+1}-a_{i,j}<N.$$ Let $m(\varepsilon)=N$. For any $y\in X,$ any $n\in\mathbb{N^{+}},$ and any $\varepsilon'>0$, denote $$B_n(y,\varepsilon'):=\{\tilde{y}\in X\mid \max_{0\leq i\leq n-1}d(T^i\tilde{y},T^iy)<\varepsilon'\}.$$ For any $x\in X$, there is $1\leq i\leq k$ such that $x\in B(x_i,\frac{\varepsilon}{2})$. Since $(X,T)$ is minimal, we can find $a>\max\{N,a_{i,1}\}$ such that $T^ax_i\in B_n(y,\varepsilon')$. There is $s\leq N$ such that $T^{a-s}x_i\in B(x_i,\frac{\varepsilon}{2})$. Let $z=T^{a-s}x_i$, then $z\in B(x_i,\frac{\varepsilon}{2})$, $d(z,x)<\varepsilon$ and $T^sz\in B_n(y,\varepsilon')$. 
	
	Finally, for any $\mu\in\cM^e(X,T),$ one has $\mu(G_{\mu})=1$ and $S_{\mu}=X.$ Then $G_{\mu}$ is dense in $X.$ This means $\mu\in \mathcal{M}_{dense}^*$ and thus $\mathcal{M}^{e}(X,T)\subset\mathcal{M}_{dense}^*.$
	\qed

\section{Proof of Theorem \ref{maintheorem-5}}\label{section-6}
We will construct the examples on full shift $\{-1,0,1\}^{\mathbb{Z}}$ inspired by the examples gived by Ronnie Pavlov in \cite{Pavlov2016}.
Denote $\mathcal{L}(X)$, the language of a subshift $X$, which is the set of all words which appear in points of $X$. Denote $\mathcal{L}_n(X)$ the set of words in $\mathcal{L}(X)$ with length $n$. For any word $w\in\mathcal{L}(X)$, denote $|w|$ the length of $w$.
Suppose that $m(n)=[\kappa n]+1$: $\mathbb{N^{+}}\to\mathbb{N^{+}}$, where $\kappa>0$, then $\lim\limits_{n\to+\infty}\frac{m(n)}{n}=\kappa$ and $m(n_1+n_2)\leq m(n_1)+m(n_2)$, for any $n_1,n_2\in\mathbb{N^{+}}$.
We define a subshift $X$ of the full shift $\{-1,0,1\}^{\mathbb{Z}}$ with the set of forbidden words $\mathcal{F}(X)$ as following:
\begin{enumerate}
	\item any word $ij$ where $ij<0$;
	\item any word $v_{j+1}0^kv_j\cdots v_2v_1$ where all $v_i\neq0$ and $1\leq k\leq m(j)$.
\end{enumerate}

First, for any $w\in\mathcal{L}(X)$, $u\in\mathcal{L}(X)$, $v\in\mathbb{N^{+}}$ with $v\geq1+m(|u|)$, we will prove that $\tilde{w}=w0^vu\in\mathcal{L}$. It is obvious that $\tilde{w}$ satisfies (1). As for (2), if $\tilde{w}$ contains a word $v_{j+1}0^kv_j\cdots v_2v_1$, then $k\geq v\geq1+m(|u|)$. Since that $j\leq|u|$ and $m(n)$ is nondecreasing, we have $k>m(j)$, which means that $\tilde{w}$ satisfies (2). As a result, $\tilde{w}\in\mathcal{L}(X)$. It can be checked that the subshift $X$ is a $m$-transitive dynamical system with $\lim\limits_{n\to+\infty}\frac{m(\varepsilon,x,y,n)}{n}=\kappa$.

Second, consider the words $w=1$ and $u=-1-1\cdots-1$ with $|u|=n$. Then if there is a word $v\in\mathcal{L}(X)$ such that $wvu\in\mathcal{L}(X)$. Then there are at least $m(n)+1$ ``0'' in $v$, so $|v|\geq m(n)+1$. Therefore, for any $0<\tilde{\kappa}<\kappa$, $\tilde{m}(\varepsilon,x,y,n)$ with $\lim\limits_{n\to+\infty}\frac{\tilde{m}(\varepsilon,x,y,n)}{n}=\tilde{\kappa}$, $(X,T)$ is not $\tilde{m}$-transitive.

Third, since for any $w\in\mathcal{L}(X)$, $v\in\mathbb{N^{+}}$ with $v\geq1+m(|w|)$, we have that $\cdots w0^vw0^vw0^vw0^v\cdots\in X$. Which means that the periodic point is dense in $X$.

Fourth, since $X$ contains a $S$-gap shift with $S=\{n\in\mathbb{N}\mid n\geq m(1)+1\}$, we have that $X$ has positive topological entropy by \cite[Proposition 2.1]{BG2015}. Then combining with that the periodic point is dense in $X,$ we have $(X,T)$ is not minimal and is not uniquely ergodic.

Finally, give $k_1,k_2,\cdots,k_t\in\mathbb{N^{+}}$, suppose that $10^{k_1}1^{j_1}\cdots0^{k_t}1^{j_t}\in\mathcal{L}(X)$ with the most ``1'', then we have $m(j_i)+1\leq k_i\leq m(j_i+1)$, for any $1\leq i\leq t$. It means that $k_1+k_2+\cdots+k_t\geq m(j_1+j_2+\cdots+j_t)+t$. Then $10^{k_1+k_2+\cdots+k_t}1^{j_1+j_2+\cdots+j_t}\in\mathcal{L}(X)$. Suppose that $f(n)$: $\mathbb{N^{+}}\to\mathbb{N^{+}}$ and $g(n)$: $\mathbb{N^{+}}\to\mathbb{N^{+}}$ with $\lim\limits_{n\to+\infty}\frac{f(n)}{n}<\frac{1}{10}\kappa$ and $\lim\limits_{n\to+\infty}\frac{g(n)}{n}<\frac{1}{10}\kappa$. When $n$ large enough, we have
\begin{enumerate}[(a)]
	\item $n\geq 2g(n)+2$;
	\item $m(n-j-2g(n))\geq g(n)$, where $j=\max\{t\in\mathbb{N^{+}}\mid g(n)+g(n)+1+f(n)+g(n)\geq m(t)+1\}$.
\end{enumerate}
Let $w=1^{n-(g(n)+1)}0^{g(n)+1}\ and\ u=1^n.$
If there are $\tilde{w},v,\tilde{u}\in\mathcal{L}(X)$ such that
\begin{enumerate}
	\item $|w|=|\tilde{w}|$, $w$ and $\tilde{w}$ are $g(n)$ letters different at most;
	\item $|u|=|\tilde{u}|$, $u$ and $\tilde{u}$ are $g(n)$ letters different at most;
	\item the length of $v$ is $f(n)$ at most;
	\item $\tilde{w}v\tilde{u}\in\mathcal{L}(X)$.
\end{enumerate}
Then by (a) and (b), $\tilde{u}$ contains the letter ``0'' and the number of ``0'' is larger than $m(n-j-2g(n))+1$. It is a contradiction for (b). Therefore, the subshift $X$ does not have the approximate product property.\qed

\bigskip

$\mathbf{Acknowledgements}$. We would like to thank P. Varandas for his comments and suggestions for the paper. X. Hou and X. Tian are supported by National Natural Science Foundation of (grant no. 12071082, 11790273).

\end{document}